\newtheorem{theorem}{Theorem}[section]
\newtheorem{lemma}[theorem]{Lemma}
\newtheorem{proposition}[theorem]{Proposition}
\newtheorem{corollary}[theorem]{Corollary}
\newtheorem{remark}[theorem]{Remark}
\newcommand*{\medcup}{\mathbin{\scalebox{1.3}{\ensuremath{\cup}}}}%
\newcommand*{\mlcup}{\mathbin{\scalebox{1}{\ensuremath{\bigcup}}}}%
\newcommand{\F}{\mathcal{F}}
\newcommand{\E}{\mathcal{E}}
\newcommand{\R}{{\mathbb{R}}}
\newcommand{\N}{{\mathbb{N}}}
\newcommand{\e}{{\epsilon}}
\newcommand{\pa}{{\partial}}
\newcommand{\C}{{\mathscr{C}}}
\newcommand{\M}{{\mathcal{M}}}
\newcommand{\Hi}{{\mathcal{H}}}
\newcommand{\W}{\Omega}
\newcommand{\om}{\omega}
\newcommand{\Emod}{{\tilde{E}}}
\newcommand{\JO}{{J_0}}
\newcommand{\ETA}{{\eta}}
\newcommand{\eo}{{\e_0}}
\newcommand\Nb[3]{U_{#1}(#2,#3)}
\newcommand{\m}{{\frak{m}}}
\renewcommand{\M}{{\frak{M}}}
\numberwithin{equation}{section}
\title{Local Minimizers of the Anisotropic Isoperimetric Problem on Closed Manifolds}
\author[A. De Rosa]{Antonio De Rosa}
\address{A. De Rosa: \parbox{\linewidth}{Department of Decision Sciences and BIDSA, Bocconi University, Milano, Italy,\\
Department of Mathematics, University of Maryland, College Park, USA}}
\email{antonio.derosa@unibocconi.it, anderosa@umd.edu}
\author[R. Neumayer]{Robin Neumayer}
\address{R. Neumayer: Department of Mathematical Sciences, Carnegie Mellon University, Pittsburgh, PA 15213, USA}
\email{neumayer@cmu.edu}
\begin{document}
\maketitle

\begin{abstract}
	Local minimizers for the anisotropic isoperimetric problem in the small-volume regime on closed Riemannian manifolds are shown to be geodesically convex and small smooth perturbations of tangent Wulff shapes, quantitatively in terms of the volume.
\end{abstract}
\section{Introduction}
Perimeter-driven variational problems play a central role across analysis and geometry, serving as a tool to investigate the geometry of Riemannian manifolds and constituting the foundation of various models of real-world phenomena in materials science and physics. Describing the shape of local and global minimizers is a central aim in study of these problems.

For the isoperimetric problem on a general closed Riemannian manifold, the qualitative description of the shape of small-volume global minimizers goes back to work of Kleiner as described in \cite{Tomter}; see also \cite{MoJo, RosSurvey, Nardulli, Fall} and references therein among the many works dedicated to this topic. In the realm of physically motivated models, small-volume capillary droplets on a flat surface were shown to be asymptotically spherical in \cite{FinnDrops, TamaniniDrops} (see also \cite{FinnBook}), and the shape of crystalline materials interacting with a convex potential was addressed in \cite{McCannPlanarCrystals, Figalli-Maggi-drops, DPMtwopoint}. In   \cite{Figalli-Maggi-drops}, Figalli and Maggi developed an approach to describing global minimizers in the small-volume regime {\it quantitatively} in terms of the volume, based on the quantitative Euclidean isoperimetric inequality \cite{FuMaPr, FiMaPr, CiLe}. The technique was further developed to prove a quantitative description of small-volume capillary droplets in a container in \cite{MaggiMihaila}. Quantitative isoperimetry has also been crucial in explicitly characterizing small-mass global minimizers for the liquid drop model in nuclear physics \cite{KnMu1, KnMu2, BC}.

Recent years have seen a surge of interest in the shape of {\it local} minimizers, and more generally of critical points, of geometric variational problems. At the forefront of this line of research was work of Ciraolo and Maggi \cite{CM}, where the authors prove and apply a quantitative version of Alexandrov's theorem to show that any volume-constrained local minimizer  of a capillarity-type energy consisting of perimeter plus potential energy with sufficiently small volume is quantitatively close to a ball; see also \cite{KM, BellCapillary, Nonlocal}. For anisotropic surface energies interacting with an external potential in $\R^n$, it was shown in  \cite{DMMN} that small-volume local minimizers that are assumed {\it a priori} to be smooth and to satisfy scale-invariant diameter bounds are quantitatively close to a Wulff shape. Very recently, a qualitative description of small-volume local minimizers of the capillary droplet problem in a container was derived \cite{DelgadinoWeser}, also under a priori smoothness and diameter assumptions. These results are all based on quantitative or qualitative stability of Heintze-Karcher-type inequalities.

In this paper, we use a different approach to the study of local minimizers. We focus on the context of volume-constrained local minimizers of anisotropic surface energies on a closed Riemannian manifold, though we expect that the ideas will apply in other settings.  This approach combines compactness, via achieving scale-invariant diameter bounds, and the rigidity of critical points of the blow-up problem.

Fix a closed Riemannian manifold $(M,g)$ of dimension $n\geq 2$, $\alpha\in (0,1)$, and a $C^{2,\alpha}$ {\it elliptic integrand} $F$ on $M$. More precisely, let $F: T M\to \R$ be a function whose restriction to the unit tangent bundle is $C^{2,\alpha}$ and whose restriction $F_{x_0}(\cdot):=F(x_0, \cdot) :T_{x_0}M\to \R$ is a convex, positively one-homogeneous function with  $F^2_{x_0}$ uniformly convex, $C^{2,\alpha}$, and positive except at the origin for each $x_0 \in M$; see Section~\ref{ssec: integrands}.
The associated {\it anisotropic surface energy} of a set of finite perimeter $\Omega \subset M$ is 
\[
\F(\Omega ) = \int_{\pa^* \Omega} F\big(x, \nu_\Omega(x)\big)\, d\mathcal{H}^{n-1}_g.
\]
A set of finite perimeter $\W$ is said to be a {\it volume-constrained $\eo$-local minimizer of $\F$} if 
$
\F(\W) \leq \F(E)
$ for any competitor $E \subset M$ with 
\[
|E|_g= |\W|_g = v \qquad \text{ and } \qquad E\Delta \W \subset \Nb{g}{\partial \W}{\eo v^{\sfrac{1}{n}}}\,
\]
Here, for any $r>0$ and measurable set $E$ we let $\Nb{g}{E}{r}$ denote the tubular neighborhood
\begin{equation}
	\label{eqn: tube def}
	\Nb{g}{E}{r} = \{ x \in M: d_g(x,y) <r\}\,.
\end{equation}
Volume-constrained $\eo$-local minimality is scaling invariant: if $\W$ is a volume constrained $\eo$-local minimizer of $\F$ in $(M,g)$ with volume $v$, then it is a volume constrained $\eo$-local minimizer of $\F$ in $(M, v^{-\sfrac{2}{n}}g)$ with volume $1$. 
Restricting $F$ at a point $x_0 \in  M$ induces a translation invariant elliptic integrand $F_{x_0}$ and a  corresponding anisotropic surface energy $\bar{\F}_{x_0}$ defined for sets of finite perimeter in $T_{x_0}M$. The surface energy $\bar{\F}_{x_0}$ is minimized for any volume constraint by a translation or dilation of the unit-volume {\it Wulff shape} $K_{x_0}\subset T_{x_0}M$ corresponding to $\bar{\F}_{x_0}$; again see Section~\ref{ssec: integrands}.

We prove that a volume-constrained $\e_0$-local minimizer of $\F$ for sufficiently small volume is a small $C^{2,\alpha}$ perturbation of the image of a Wulff shape $K_{x_0}$ under the exponential map at a point $x_0 \in M$. The estimates are quantitative with respect to the volume constraint.

\begin{theorem}\label{thm: local min}  
	Let $(M,g)$ be a closed Riemannian manifold of dimension $n \geq 2$, fix $\alpha\in (0,1)$, and let $\F$ be an anisotropic surface energy corresponding to a $C^{2,\alpha}$  elliptic integrand $F$. For any  $\eo > 0$ and  $\kappa>0$, there exist $v_0(g, F, \eo, \kappa) \in (0 , |M|)$ and $C=C( g, F, \alpha, \eo,  \kappa)>0$ such that  the following holds. 
	
	Let $\W_v$ be a  volume-constrained $\eo$-local minimizer of $\F$ with volume $v\in ( 0, v_0]$ and $\F(\W_v) \leq \kappa v^{\sfrac{(n-1)}{n}}$. Then $\W_v$ is a geodesically convex $C^{2,\alpha}$-domain. Moreover, 
	there is a point $x_0 \in M$ such that 
	\begin{equation}
		\label{eqn: quant hausdorff0}
	\frac{d_{H,g}\Big(\partial \W_v, \exp_{x_0}(\partial v^{\sfrac{1}{n}}K_{x_0})\Big)}{v^{\sfrac{1}{n}}} < C v^{\sfrac{1}{2n^2}} \qquad \mbox{and} \qquad \frac{\Big|\W_v \Delta \exp_{x_0}(v^{\sfrac{1}{n}}K_{x_0})\Big|_g}{v} < Cv^{\sfrac{1}{2n}} \,. 
	\end{equation}
\end{theorem}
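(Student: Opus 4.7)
The plan is to combine a blow-up / compactness argument with rigidity of critical points in a Euclidean limit problem, then quantify via the sharp stability of the anisotropic isoperimetric inequality. The first step exploits the scale invariance already highlighted in the introduction: rescaling the metric by $\tilde g_v := v^{-\sfrac{2}{n}} g$ turns $\W_v$ into a unit-volume $\eo$-local minimizer with $\tilde\F(\W_v) \leq \kappa$. In geodesic normal coordinates at any $x_0 \in M$, the rescaled metric $\tilde g_v$ converges to the Euclidean metric on $T_{x_0}M$ in $C^{2,\alpha}_{\rm loc}$ as $v \to 0$, so at unit scale the problem becomes asymptotically the translation-invariant anisotropic isoperimetric problem associated with $\bar\F_{x_0}$ on $T_{x_0}M \cong \R^n$.

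I expect the main obstacle to be a \emph{scale-invariant diameter bound} $\mathrm{diam}_{\tilde g_v}(\W_v) \leq D_0(g,F,\eo,\kappa)$. Unlike the global-minimizer setting of \cite{Figalli-Maggi-drops}, we cannot simply test against a Wulff shape, because such competitors typically violate the $\eo$-tubular constraint $E \Delta \W \subset \Nb{g}{\pa\W}{\eo v^{\sfrac{1}{n}}}$. The strategy would be to first derive uniform upper and lower density bounds on $\pa \W_v$ from the $\eo$-local minimality together with the perimeter upper bound, and then use a chunk-swapping argument: were $\W_v$ to decompose, in the rescaled metric, into pieces separated by large distances, one could translate a small piece within its own $\eo$-tubular neighborhood (or excise it and redistribute the volume nearby) to strictly lower the energy, using the Euclidean anisotropic isoperimetric inequality inside a small ball in normal coordinates. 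Ruling out all such split configurations is the technical heart of the argument.

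Granted the diameter bound, a standard compactness argument gives, for any sequence $v_k \to 0$, points $x_k \to x_0 \in M$ and rescaled sets $E_k \subset T_{x_k}M$ converging in $L^1$ and in Hausdorff distance to a bounded unit-volume limit $E_\infty \subset T_{x_0}M$. Passing the $\eo$-local minimality through the limit, $E_\infty$ is an $\eo$-local minimizer of $\bar\F_{x_0}$; because $E_\infty$ is bounded and has unit volume, the Euclidean anisotropic isoperimetric inequality (and its sharp stability) forces $E_\infty$ to be a translate of $K_{x_0}$. A contradiction/compactness argument then upgrades this subsequential convergence to a statement valid for every $v \leq v_0$, giving the qualitative existence of some base point $x_0$ with $\W_v$ Hausdorff-close to $\exp_{x_0}(v^{\sfrac{1}{n}} K_{x_0})$ and, after boundary regularity for anisotropic almost-minimizers, geodesic convexity and the $C^{2,\alpha}$ conclusion.

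For the quantitative rates, note that the deficit of $\W_v$ measured against $\bar\F_{x_0}$ after transport via normal coordinates is $O(v^{\sfrac{1}{n}})$, coming from curvature corrections of order $v^{\sfrac{2}{n}}$ integrated over boundaries of $\bar\F_{x_0}$-size $O(1)$. The sharp anisotropic stability inequality \cite{FiMaPr} then yields the $L^1$ bound $|\W_v \Delta \exp_{x_0}(v^{\sfrac{1}{n}} K_{x_0})|_g / v \lesssim v^{\sfrac{1}{(2n)}}$, which is the second estimate in \eqref{eqn: quant hausdorff0}. De Giorgi-type flatness improvement plus Schauder theory for almost-minimizers of $\bar\F_{x_0}$ upgrades this to $C^{2,\alpha}$ proximity of $\pa \W_v$ to $\exp_{x_0}(\pa v^{\sfrac{1}{n}} K_{x_0})$; interpolating the $L^1$ gain against the uniform $C^1$ control of the boundary graph yields an $L^\infty$ (equivalently Hausdorff) decay of order $(v^{\sfrac{1}{(2n)}})^{\sfrac{1}{n}} = v^{\sfrac{1}{(2n^2)}}$, matching the first estimate in \eqref{eqn: quant hausdorff0}. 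Geodesic convexity is then immediate from $C^{2,\alpha}$ closeness to the strictly convex $K_{x_0}$, provided $v_0$ is chosen below the convexity radius scale.
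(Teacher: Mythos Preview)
Your overall architecture is right, but two key steps do not go through as written.

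\textbf{The diameter bound.} You propose to first establish uniform density estimates from $\eo$-local minimality and the perimeter bound, and then rule out splitting by a ``chunk-swapping'' competitor. This is precisely the order the paper argues \emph{cannot} work. Almgren's volume-fixing argument does yield quasi-minimality and density estimates for $\W_v$, but the constants depend on $\W_v$ itself and are not uniform in $v$ in a scale-invariant way; see the discussion in the introduction and Section~\ref{ssec: reg}. Uniform density estimates are obtained only \emph{after} the diameter bound, in Lemma~\ref{lem: volume constraint}, using Lemma~\ref{lemma: dilation} applied in charts whose size is controlled by Theorem~\ref{thm: diameter bound}. The paper's actual diameter argument bypasses density estimates entirely: it covers $\W_v$ by cubes of side $v^{1/n}$ in charts (Lemma~\ref{cubecovering}), applies the Wulff inequality piecewise, and then invokes the concavity Lemma~\ref{lem: J balls} to trap most of the volume in $J_0$ balls. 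A coarea-type differential inequality then closes the gap. Your chunk-swapping idea is also obstructed by the tubular constraint: moving a distant piece of $\W_v$ typically takes you outside $\Nb{g}{\partial\W_v}{\eo v^{1/n}}$, so such competitors are inadmissible.

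\textbf{Rigidity of the limit.} You write that ``the Euclidean anisotropic isoperimetric inequality (and its sharp stability) forces $E_\infty$ to be a translate of $K_{x_0}$.'' This is not correct for \emph{local} minimizers: the Wulff inequality characterizes global minimizers, and stability does not upgrade local to global. The paper instead uses the anisotropic Alexandrov-type theorem of De Rosa--Kolasi\'nski--Santilli \cite[Corollary~6.8]{DRKS}, which classifies smooth critical points of $\bar\F_{x_0}$ among sets of finite perimeter as finite unions of equal-volume Wulff shapes; a second-variation argument (Step~5 of Theorem~\ref{thm: uniform convergence}) then rules out multiple components. This ingredient is essential and is what distinguishes the local-minimizer case from the global one (cf.\ the remark following Theorem~\ref{thm: local min}).

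Your quantitative step via \cite{FiMaPr} and the passage from $L^1$ to Hausdorff via density estimates matches the paper's Corollary~\ref{cor: improved estimate}. Note, though, that geodesic convexity is not quite ``immediate'': the paper derives an almost-umbilicality estimate \eqref{umbilical}, obtains a lower bound on the principal curvatures of $\exp_{x_0}^{-1}(\W_v)$, and then runs a short geometric argument comparing the curvature of a hypothetical exterior geodesic to the Christoffel symbols in normal coordinates.
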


Let us make a few remarks about Theorem~\ref{thm: local min}.

\begin{remark}
{\rm 
The property of $\W_v$ being of class $C^{2,\alpha}$ follows from \eqref{eqn: quant hausdorff0} and the regularity theory for minimizers of anisotropic surface energies (see Section \ref{ssec: reg}). Actually, arguing as in the proof of \cite[Theorem 2]{Figalli-Maggi-drops}, one can strengthen the quantitative Hausdorff estimates \eqref{eqn: quant hausdorff0} and prove that $\partial \W_v$ is locally a $C^{2,\alpha}$ graph, with quantitative estimates in $v$. By Schauder estimates, if $F$ is $C^{k,\alpha}$, one can upgrade the quantitative regularity of $\W_v$ to $C^{k,\alpha}$. { We observe that the minimal requirement for $F$ being $C^{2,\alpha}$ is needed not only to prove that $\W_v$ is a geodesically convex $C^{2,\alpha}$-domain, but also to prove the closeness to the Wulff shape, as we make use of the Alexandrov-type theorem of the first author, Kolasi\'{n}ski, and Santilli \cite[Corollary 6.8]{DRKS}, which requires $F$ being $C^{2,\alpha}$.}
}
\end{remark}

\begin{remark}\label{rmk: energy bound}
{\rm
The smallness of the volume $v_0$  in Theorem~\ref{thm: local min} depends on the constant $\kappa$ of the anisotropic surface energy bound $\F(\W_v) \leq \kappa v^{\sfrac{(n-1)}{n}}$. {This dependence is not needed in \cite[Theorem 2]{Figalli-Maggi-drops}, as global minimizers in the Euclidean setting automatically satisfy an upper bound on the anisotropic surface energy, by means of competition arguments. On the contrary,} this dependence is crucial at several points in our proof. A posteriori, however, Theorem \ref{thm: local min} implies the {\it improved} bound $\F(\W_v) \leq C v^{\sfrac{(n-1)}{n}}$, where $C=C(F,g)>0$ depends only on the maximum (in $x$) of the anisotropic surface energies of the Wulff shapes $K_{x}$. Hence it is conceivable that one could upgrade Theorem \ref{thm: local min}, removing the assumption on the anisotropic perimeter bound and obtaining constants $v_0$ and $C$ that are uniform in $\kappa$. We did not succeed in doing so and we leave it as an interesting open question. 
}
\end{remark}
\begin{remark}
{\rm 
In the case of {\it global} minimizers of $\F$, one may hope to prove a more refined statement than the one in Theorem~\ref{thm: local min}, namely providing information about the point $x_0\in M$ at which a global minimizer is centered. We expect a global minimizer is centered at a point $x_0$ where $\bar{\F}_{x_0}(K_{x_0})$ is minimized. This question becomes intriguing  if one considers an integrand $F$ such that $\bar{\F}_{x_0}(K_{x_0})$ is constant. Here we expect global minimizers to be centered at a point maximizing a weighted average of the sectional curvatures of $(M,g)$ that is compatible with the anisotropy of $F$, playing the role of the scalar curvature for the isoperimetric problem.  We leave this point as an interesting open problem.
}
\end{remark}

\begin{remark}{\rm 
For global minimizers, a slightly weaker version of Theorem \ref{thm: local min}, which is stated in Theorem \ref{thm: uniform convergence2}, can be generalized to every convex integrand that is $C^1$ in the $x$ variable. Indeed, we just need to replace the use of the Alexandrov-type theorem of the first author, Kolasi\'{n}ski, and Santilli \cite[Corollary 6.8]{DRKS} in the proof of the intermeditate Theorem \ref{thm: uniform convergence} with the uniqueness (up to translations) of the Wulff shape among global minimizers \cite{bams,fonsecam1991,bromo,milman}. The rest of the proof of Theorem \ref{thm: uniform convergence2} remains unchanged.
}
\end{remark}

Let us discuss the proof of Theorem~\ref{thm: local min}. First,  a {qualitative} form of Theorem~\ref{thm: local min} requires three key ingredients: (1) a diameter bound, (2) a compactness argument, and (3) a classification of local minimizers in the blow-up limit.  Ingredient (3) was proven in \cite{DRKS}, where the first author, Kolasi{\'n}ski, and Santilli showed that Wulff shapes are the only critical points of the blowup problem among sets of finite perimeter.  The main contributions of this paper are ingredients (1) and (2). 

For ingredient (1), in the context of the standard {perimeter functional},  a sequence of volume-constrained local minimizers with a scale-invariant perimeter bound and  volume $v_k \to 0$ can be shown to have uniformly bounded (constant) mean curvature with respect to the rescaled metrics $h=v_k^{\sfrac{-2}{n}}g$  via the Heitze-Karcher inequality  (see \cite[Theorem 2.2]{MoJo} for global minimizers).  The {\it area monotonicity formula} then implies a uniform diameter bound.  For any anisotropic surface energy that is not an affine transformation of the perimeter,  however,  no monotonicity formula is available \cite{allard1974characterization}!  

Establishing uniform density estimates is another known technique for achieving diameter bounds, but it also falls short in this setting.  An argument of Almgren \cite{Almgren} (see also \cite{GonMasTam83,MorganIsop}) shows that  $\W_v$ is a quasi-minimizer of $\F$ and
  satisfies density estimates, but with constants that depend on the set itself and in particular are not uniform in $v$ in a scale invariant sense. {Uniform} quasi-minimality and thus uniform density estimates  can be achieved by scaling for some volume-constrained problems in Euclidean space, see e.g. \cite{FuJu, N16}, but this technique is clearly specific to $\R^n$. ({\it After} proving the diameter bound, we apply this technique in charts to prove in   Lemma~\ref{lem: volume constraint} that local minimizers as in Theorem~\ref{thm: local min} satisfy uniform density estimates in a scale invariant sense.)

 Another approach to obtaining a diameter bound was shown in \cite{Figalli-Maggi-log} and \cite{MaggiMihaila}, for global minimizers of weighted Euclidean isoperimetric problems and capillary drops in a container in $\R^n$ respectively.  
 Here the idea is to partition a global minimizer $E_v$ with a well-chosen collection of cubes $\{Q_i\}_{i=1}^N$.  Applying the isoperimetric inequality to each element of the partition they bound from below the sum of the energies of the partition elements. On the other hand, by global minimality, this sum can be bounded from above by the same constant, obtaining an estimate of the following type:
 \begin{equation}
 	\label{eqn: sandwich}
0 \leq \sum  \bigg( \frac{|\Omega_v \cap Q_i|_g}{v}\bigg)^{\sfrac{(n-1)}{n}} -1 \leq \epsilon(v)
  \end{equation}
The concavity of the function $t \mapsto t^{\sfrac{(n-1)}{n}}$ immediately implies that for any $1 \leq L \leq N$, 
\[
   \bigg( \sum_{i=1}^L \frac{|\Omega_v \cap Q_i|_g}{v}\bigg)^{\sfrac{(n-1)}{n}} +  \bigg(\sum_{i=L+1}^N \frac{|\Omega_v \cap Q_i|_g}{v}\bigg)^{\sfrac{(n-1)}{n}} -1 \leq \epsilon(v),
 \]
from which one deduces that $|\Omega_v \cap Q_i| \geq 1-\e$ for some $i$ and for $v$ small enough. Then a classical use of coarea formula provides a standard differential inequality which allows to prove a diameter bound. We remark that in doing so, in order to make $\epsilon(v)$ small enough, the radius of the cubes should be optimized at a scale $v^{1/2n}$. However this provides a suboptimal diameter bound that in the scale invariant sense blows up as $v\to 0$.

This sandwiching argument relies on having precisely the constant $1$ in \eqref{eqn: sandwich}. Underpinning this is the fact that the energy of a small-volume global minimizer is asymptotically equal to the isoperimetric constant of the blow up problem. Thus, this approach is confined to the setting of global minimizers.

For the aforementioned reasons, we need to take a new approach to obtain our scale invariant diameter bound for volume-constrained $\eo$-local minimizers of $\F$ with small volume (Section \ref{sec:diam}). It is possible to adapt the ideas of  \cite{Figalli-Maggi-log, MaggiMihaila} to obtain a similar partition of a local minimizer $\W_v$ on the compact manifold. However, as explained above, one cannot obtain the constant $1$ in \eqref{eqn: sandwich} (or equivalently, make the right-hand side of \eqref{eqn: sandwich} small). To overcome this problem we use a general concavity lemma (Lemma \ref{lem: J balls}) for sequences of real numbers, which has been previously utilized in concentration compactness arguments for isoperimetric type problems; see for instance  \cite{Gol22,candautilh2021existence,CGOS, novaga2022isoperimetric}. 
 We apply this lemma to show that most of the volume of $\Omega_v$ is contained in the union of $\JO$ balls of radius $v^{1/n}$, where $\JO$ depends just on $n$, $F$, and the anisotropic isoperimetric ratio of the finite perimeter set.
From this point, we can use $\W_v$ intersected with the $\JO$ balls as a competitor, to deduce a standard differential inequality which allows us to obtain the  diameter bound.
Beyond the fact that this argument works for {\it local } minimizers, this approach provides a diameter bound that does not blow up in a scale invariant sense.  

%
The diameter bound is the starting point for ingredient (2), allowing us in Section \ref{sec: qual} to pull back and rescale a sequence of local minimizers in ($J_0$) charts and obtain $L^1$ convergence to a set $E$. Using the rigidity theorem \cite[Corollary 6.8]{DRKS} (Ingredient (3)), we deduce the limiting set is a {\it translation} of a tangential Wulff shape. This translation, whose modulus a priori could be much larger than the natural length scale $v^{1/n}$, leads to serious difficulties in transmitting this information about the blowup back to the local minimizers on the manifold. Hence we need to compare the shape of the limiting translated Wulff shape and a tangent Wulff shape at a different appropriately chosen point. 
Since the integrand $F$ is not autonomous, that is, it is $x$-dependent,  careful analysis is needed  
to carry this out in Section~\ref{sec: app}.

The final step (Section \ref{sec: quantitative}) is to provide a quantitative version of the closeness of the previous step, meaning that the closeness is not only scale invariant, but will actually decay quantitatively as a power of the volume. The key ingredient for this is the quantitative Wulff inequality of Figalli, Maggi, and Pratelli \cite{FiMaPr}.

\subsection*{Acknowledgments}
	 Antonio De Rosa was partially supported by the NSF DMS CAREER Award No.~2143124 and by the European Union: the European Research Council (ERC), through StG ``ANGEVA'', project number: 101076411. Views and opinions expressed are however those of the authors only and do not necessarily reflect those of the European Union or the European Research Council. Neither the European Union nor the granting authority can be held responsible for them.Robin Neumayer is partially supported by NSF Grant DMS-2155054 and the Gregg Zeitlin Early Career Professorship. Both authors are indebted to Michael Goldman for  showing us the current much simpler proof of Lemma \ref{lem: J balls} and sharing with us its use in the setting of concentration compactness arguments. Both authors warmly thank Nick Edelen for a useful discussion.

\section{Preliminaries}
In this section we introduce definitions and notation  and prove some preliminary results that will be needed in the remainder of the paper.
\subsection{Basic Notation}
Consider a smooth Riemannian manifold $(M,g)$ of dimension $n\geq 2.$ Let $B_g(x,r) = \{ y \in M ; d_g(x,y) <r\}$ denote the geodesic ball of radius $r>0$ centered at $x \in M$. Recall the notation  $\Nb{g}{E}{r}= \{ y \in M : d_g(x, E)<r\}$ introduced in \eqref{eqn: tube def} for the tubular neighborhood of a set $E \subset M$. The Hausdorff distance between sets $\Sigma, \Sigma' \subset M$ is defined by 
$$d_{H,g} (\Sigma, \Sigma' ) = \inf\{ r>0 : \Sigma \subset \Nb{g}{\Sigma'}{r} \text{ and } \Sigma'\subset \Nb{g}{\Sigma}{r}\}.$$ 
For the $k$ dimensional Hausdorff measure with respect to the metric $g$, we write $\Hi^k_g(\cdot ) $. When $k=n$ we simply write $|\cdot |_g = \Hi^n_g(\cdot)$ since $\Hi^n_g$ agrees with the standard volume measure induced by $g$. We call a set measurable if it is $\Hi^n_g$-measurable. It is worth noting how these quantities behave under rescaling the metric $g$: if $h=r^{-2} g$ for $r>0$, then for $x \in M, E\subset M,$ and $\rho >0$ we have 
\[
B_h(x,\rho) = B_g( x, r\rho),\quad  \Nb{h}{E}{\rho} = \Nb{g}{E}{r\rho},\quad d_{H,h} (\Sigma, \Sigma' ) =\frac{d_{H,g} (\Sigma, \Sigma' )}{r} , \quad \Hi^k_h(E) = \frac{\Hi^k_g(E)}{r^{k}}.
\]
We denote the Euclidean metric by $g_{euc}$. 

We let $\text{inj}_gM>0$ denote the injectivity radius of $M$, i.e. the supremum over $r>0$ such that the exponential map $\exp_x: T_x M \to M$ is a diffeomorphism from $B_{g_x}(0,r)$ to $B_g(x,r)$ for all $x \in M$. 

Let $E\Delta G = (E\setminus G) \cup (G\setminus E)$ be the symmetric difference between sets, which we note satisfies the triangle inequality-type property 
\begin{equation}
	\label{eqn: triangle}
	E\Delta G \subset (E\Delta E') \cup (E'\Delta G).
\end{equation}
A sequence of measurable sets $\{ E_i\}$ converges in $L^1$ to $E$ if $|E_i\Delta E|_g \to 0$. 

\subsection{Sets of finite perimeter}\label{ssec: sofp}
 We work in  the framework of sets of finite perimeter. A measurable set $E\subset M$ is a set of finite perimeter if
\[
P_g(E)=\sup\left\{ \int_E \text{div}_g T(x)\,d\Hi^n_g(x)  : T \in \mathcal{X}_c(M), |T(x)|_g \leq 1 \text{ for all } x \in M \right\}< +\infty.
\]
Here $\mathcal{X}_c(M)$ denotes the space of smooth compactly supported vector fields on $M$. { Although many results we will quote from the theory of finite perimeter sets are proved in the Euclidean space, they can be easily extended to Riemannian manifolds, see for instance \cite[Chapter 2.4]{UsefulThesis}.}

For any set of finite perimeter $E\subset M$, by the Riesz representation theorem for bounded linear functionals on $\mathcal{X}_c(M)$ (\cite[Section 1.2]{MPPP}, \cite[Theorem 2.36]{UsefulThesis}),
 there is a finite Radon measure $|D1_E|$ on $M$ and a $|D1_E|$-measurable vector field $\nu_E :M \to TM$ with $|\nu_E(x)|_g =1$ for $|D1_E|$-a.e. $x \in M$, such that the distributional gradient $D1_E$ has the representation $D1_E = \nu_E |D1_E|$ (in other words,  $\int_E \text{div}_g T \, d\mathcal{H}^n = \int_M \langle T, \nu_E\rangle_g \, d |D1_E|$ for any $T \in \mathcal{X}_c(M)$). Note that $\nu_E$ depends on the metric $g$, though we suppress this dependence in the notation when there is no ambiguity.
 
  The reduced boundary $\partial^*E$ of $E$ is defined by
\[
\partial^*E = \{ x \in \text{spt} |D1_E|\ : \ |\nu_E(x)|_g =1\}.
\] 
It is easy to show that $\text{spt}|D1_E|$ and thus $\partial^*E$ are contained in the topological boundary $\partial E$. Note that $D1_E = \nu_E |D1_E|\llcorner \partial^*E$.  By the De Giorgi Structure Theorem (see \cite[Theorem 15.9]{Mag} in the Euclidean case; the proof can be adapted to the setting of Riemannian manifolds), $\partial^*E$ is an $\mathcal{H}^{n-1}$-rectifiable set and $D1_E = \nu_E \mathcal{H}_g^{n-1}\llcorner \partial^*E$. 

We denote with $E^{(1)}$ the set of points of density $1$ for $E$, i.e.
$$E^{(1)}:=\left\{x\,:\, \lim_{r\to 0}\frac{|B_g( x, r)\cap E|_g}{|B_g( x, r)|_g}=1\right\}.$$

Sets of finite perimeter enjoy a useful compactness property with respect to the $L^1$ topology: if $\{E_i\}$ is a sequence of sets of finite perimeter in $M$ with $E_i \subset A$ for a compact set $A$ and $\sup_i P(E_i) <+\infty$, then up to a subsequence, $E_i \to E$ in $L^1$ for a set of finite perimeter $E\subset A.$ 

All of these properties of sets of finite perimeter are invariant by modification of the set on an $\mathcal{H}^n_g$-negligible set. By \cite[Prop. 12.19]{Mag}, we can modify $E$ on a $\mathcal{H}^n_g$-negligible set to ensure that 
\begin{equation}
	\label{eqn: cleaned up sets}
\overline{\partial^*E} =\left\{x \in M : 0< |E\cap B_g(x,r)|_g <|B_g(x,r)|_g \text{ for all }r>0 \right\} = \pa E\,.
\end{equation}
(The first identity always holds, while the second holds after a measure zero modification.) In the sequel we will tacitly assume that every set of finite perimeter has been cleaned up in this way.

\subsection{Elliptic integrands}\label{ssec: integrands}

Let $F : TM \to \R$ be an anisotropic integrand on $(M,g)$ as defined in the introduction.  Given a diffeomorphism  $\psi: U \to V \subset M$,  define the pulled-back integrand 
 \begin{equation}
 	\label{eqn: pull back}
\psi^*F : TU \to \R \qquad \text{  by } \qquad \psi^*F(x , \nu ) = F(\psi(x) , d\psi_x \nu).
 \end{equation}
When no confusion can arise, we will write $\psi^*F= F^*$; this should not be confused with the dual integrand $F_*$ used in Appendix~\ref{sec: app}. 
Let $r_0= \text{inj}_g(M)/2$. The $C^{2,\alpha}$ regularity of $F$ in particular implies that for any $x_0 \in M$ if we take $\psi:B_{g_{euc}}(0 r_0) \to B_{g}(x_0, r_0)$ to be the normal coordinate map, then 
\begin{equation}
	\label{eqn: F C1 reg}
 \sup\left\{ \| \psi^*F(\ \cdot\  , \nu)\|_{C^{1}(B_{g_{euc}}(0,r_0))} \ :   \ \nu \in S^{n-1} \right\} \leq C(n,g,F).
\end{equation}
For the majority of the paper we will only use this $C^1$ regularity of $F$ in $x$ and will only use the higher regularity in $x$ when applying the $\e$-regularity theorem and Schauder estimates in Section~\ref{sec: quantitative}.

For $x_0 \in M$, the restricted integrand $F_{x_0} :T_{x_0} M \to \R $ defined by $F_{x_0}(\nu) = F(x_0,\nu)$ gives rise to a translation invariant surface energy for sets of finite perimeter $E \subset T_{x_0}M$:
\[
\bar{\F}_{x_0}(E) = \int_{\partial^* E} F_{x_0}(\nu_E(x)) \, d\Hi^{n-1}_{g_{x_0}}(x).
\]
Here the Hausdorff measure and measure theoretic outer unit normal are taken with respect to the metric $g_{x_0}( \cdot ,\cdot)$ on $T_{x_0}M$. 
Among sets of a fixed volume in $T_{x_0}M$, the energy $\bar{\F}_{x_0}$ is uniquely minimized by translations and dilations of the volume-$1$ {\it tangent Wulff shape} $K_{x_0} \subset T_{x_0}M$ defined by
\begin{equation}
	\label{eqn: tangent wulff A}
K_{x_0} := \frac{\hat{K}_{x_0}}{|\hat{K}_{x_0}|_{g_{x_0}}} \qquad \mbox{where $\hat{K}_{x_0}:= \big\{ y \in T_{x_0}M\  : \ g_{x_0}( y ,\nu) < F_{x_0}(\nu) \text{ for all } \nu \in T_{x_0}M\setminus\{0\}\big\}$\,;}
\end{equation}
see \cite{bams,fonsecam1991,bromo,milman}. Note that $0 \in K_{x_0}$.  This minimality property is stated in scale-invariant form as the {\it Wulff inequality}:
\begin{equation}\label{Wulffineq}
\bar{\F}_{x_0}(E) \geq n|E|_{g_{x_0}}^{\frac{n-1}{n}}|\hat{K}_{x_0}|_{g_{x_0}}^{1/n}=\left(\frac{|E|_{g_{x_0}}}{|\hat{K}_{x_0}|_{g_{x_0}}}\right)^{\frac{n-1}{n}}\bar{\F}_{x_0}(\hat{K}_{x_0})=|E|_{g_{x_0}}^{\frac{n-1}{n}}\bar{\F}_{x_0}(K_{x_0})
\end{equation}
We set
\begin{equation}\label{eqn:massimo}
\C = \sup\{ P_{g_{x}}(K_x ) : x \in M\}.
\end{equation}

Define the quantities	$\M = \sup\{ F(x,\nu ) : x \in M , |\nu|_g= 1\}$ and $\m = \inf\{ F(x,\nu)  : x \in M , |\nu|_g= 1\}$, which are positive and finite by the assumptions on $F$ and the compactness of $(M,g)$. Moreover,  $\m P_g(E) \leq \F(E) \leq \M P_g(E)$ for any set of finite perimeter $E\subset M$ and $B_{g_{x_0}}(0, \m) \subset K_{x_0}\subset B_{g_{x_0}}(0, \M)$.
	  Estimating the isoperimetric profile above by taking geodesic ball competitors, we thus find that there exists $\bar{v}=\bar{v}(n,g)\in (0,|M|_g)$ such that 
\begin{equation}\label{eqn: Wulff small volume}
	\F(E) \geq \,{\frac{\m}{2}\, n \omega_n^{\sfrac{1}{n}}} |E|_g^{\sfrac{(n-1)}{n}} \qquad \text{ for }E\subset M\  \text{ with }\ |E|_g\leq \bar{v}.
\end{equation}

It is also useful to notice how the surface energy behaves under rescaling the metric. Setting $h= r^{-2}g$ for $r>0$, the same integrand defines an anisotropic surface energy
\[
\F_h(E) = \int_{\partial^*E}F(x,\nu_E^h(x)) \, d\Hi^{n-1}_h(x) = \frac{ \F(E)}{r^{n-1}}. 
\]
\subsection{Classical regularity results for local minimizers}\label{ssec: reg}
Let $\W_v$ be  a volume-constrained $\eo$-local minimizer of $\F$
 with volume $v \in (0,|M|)$.  A classical argument dating back to Almgren \cite{Almgren} (see also \cite{GonMasTam83,MorganIsop}) shows that  $\W_v$ is a quasi-minimizer of $\F$ and 
  satisfies density estimates. 
  Unlike the {\it uniform} quasi-minimality and density estimates we will ultimately derive in  Lemma~\ref{lem: volume constraint}, these estimates depend  on the set $\W_v$ itself, and thus are not be directly useful to proving Theorem~\ref{thm: local min}. However, they show that $\Hi^n$-a.e. $x \in \partial E$ is a point of density strictly between $0$ and $1$; together with Federer's theorem \cite[Theorem 16.2]{Mag} this implies that $\Hi^n(\partial \W_v\setminus \partial^*\W_v) =0$ and thus, up to modifying $\W_v$ by an $\Hi^n$-negligible set, we can  replace a local minimizer with an open set representative. In the remainder of the paper we will always take this representative.

In fact, the classical regularity theory, \cite{Almgren1, Bomb, SSA, SchoenSimon, DS} shows that $\partial \W_v$ is a $C^{1,\alpha}$ hypersurface outside of a singular set of $\mathcal{H}^{n-2}$ measure zero. These estimates are again not a priori uniform in $v$.

\subsection{Matching the volume constraint}

At several points throughout this paper, we will have a set whose volume is close, but not exactly equal, to a certain prescribed volume. To use it as a competitor for volume-constrained local minimality, we need to replace it with a set that exactly satisfies the volume constraint. The following technical lemma allows us to do this in such a way that the difference between the surface energies of the original set and the modified set is quantitatively controlled in terms of the volume error.  For the proof we borrow some ideas from \cite[Lemma 3.1]{MaggiMihaila}.

\begin{lemma} \label{lemma: dilation}
For any $n\geq 2$, there exist $\ETA_0>0$ and $c_n>0$ depending only on $n$ such that the following holds. 
Fix $D >1$, $\ETA\in(0,\ETA_0)$ and let $(N,h)$ be a closed Riemannian $n$-manifold with $\text{inj}_h N >D$ and  
\begin{equation}\label{eqn: metrics close 2}
	\| \psi^*h - g_{euc}\|_{C^1(B_{g_{euc}}(0,D))} \leq \frac\ETA D
\end{equation}
in normal coordinates $\psi$ centered at any $x \in M$.
 Let $E\subset M$ be a measurable set with $||E|_h -1|<\ETA$ for which there exist finitely many disjoint open sets $\{V_i\}_{i=1}^K$ with $\text{diam}_h(V_i) \leq D$ such that $\Nb{h}{E}{1} \subset \cup_{i=1}^K V_i$. Then there is a measurable set $\tilde E \subset \cup_{i=1}^K V_i$  such that 
\begin{align}\label{eqn: 2 nbhd}
	\Emod \Delta E \Subset \Nb{h}{\pa E}{c_n D\ETA }\qquad \text{ and }\qquad	|\Emod|_h=1.
\end{align}
Moreover, if $\F$ is an anisotropic perimeter with integrand $F$ satisfying
\begin{equation}
	\label{eqn: aniso bound}
\| \psi^*F(\cdot ,\nu) - \psi^*F(0, \nu)\|_{C^1(B_{g_{euc}}(0,D))} \leq 1
\end{equation}
for every $x$ and $\nu$, then 
\begin{equation}
	\label{eqn: aniso estimate}
\F(\Emod) \leq \F(E)\left(1 + c_n D \left||E|_h -1\right|\right).
\end{equation}
\end{lemma}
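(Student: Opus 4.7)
I would construct $\Emod$ by flowing $E$ along a suitable compactly supported vector field $X$ on $N$ for a short time $t^*$, where $t^*$ is chosen via the intermediate value theorem to match the volume constraint exactly. The construction is carried out inside a single normal coordinate chart $V_i$ containing a reduced boundary point of $E$, where the metric $h$ and the integrand $F$ are close to their values at the center of the chart by \eqref{eqn: metrics close 2} and \eqref{eqn: aniso bound}.

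\textbf{Construction of the vector field.} Since $|E|_h \in (0,|N|_h)$ the reduced boundary $\pa^* E$ is non-empty; fix $x_0 \in \pa^* E$ and let $V_i$ be the chart containing it. Using the approximate half-space behavior of $E$ near $x_0$ (De~Giorgi structure theorem, Section~\ref{ssec: sofp}) together with the $C^1$-closeness of $\psi^* h$ to the Euclidean metric afforded by \eqref{eqn: metrics close 2}, I would construct a smooth vector field $X$ on $N$ supported in a small ball $B_h(x_0,\rho) \subset V_i$ and aligned roughly with $\nu_E(x_0)$, satisfying $\|X\|_{C^1(N,h)} \leq c_n$ and a divergence lower bound of the form
\[
\Big|\int_E \text{div}_h X\, d\HH^n_h\Big| \geq \frac{1}{c_n D}\,.
\]
The factor $D$ reflects the fact that \eqref{eqn: metrics close 2} gives an error $\ETA/D$ rather than $\ETA$, so the relevant geometric quantities computed in coordinates differ from their flat counterparts by this amount across a chart of diameter $D$.

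\textbf{Volume matching and verification of the two conclusions.} Let $\Phi_t$ denote the flow of $\pm X$, where the sign is chosen according to whether $|E|_h$ must be decreased or increased. Then
\[
\frac{d}{dt}|\Phi_t(E)|_h \bigg|_{t=0} = \int_E \text{div}_h X \, d\HH^n_h,
\]
and by the intermediate value theorem there exists $t^* \in \R$ with $|t^*| \leq c_n D\, \big||E|_h - 1\big| \leq c_n D \ETA$ and $|\Phi_{t^*}(E)|_h = 1$. Set $\Emod := \Phi_{t^*}(E)$. Since $\Phi_t$ moves points at speed at most $\|X\|_\infty \leq c_n$, the symmetric difference $\Emod \Delta E$ is compactly contained in $\Nb{h}{\pa E}{c_n D \ETA}$, yielding \eqref{eqn: 2 nbhd}. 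For \eqref{eqn: aniso estimate}, the area formula gives
\[
\F(\Emod) = \int_{\pa^* E} F\bigl(\Phi_{t^*}(y), (D\Phi_{t^*}(y))^{-T}\nu_E(y)\bigr)\, |J_{n-1}\Phi_{t^*}(y)|\, d\HH^{n-1}_h(y),
\]
and using $\|\Phi_{t^*} - \text{id}\|_{C^1} \leq c_n |t^*| \leq c_n D \ETA$ together with the integrand closeness \eqref{eqn: aniso bound}, one obtains $\F(\Emod) \leq \F(E)(1 + c_n D\, ||E|_h - 1|)$.

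\textbf{Main obstacle.} The delicate point is the construction of $X$ with the required divergence lower bound. One must exploit the density-$\tfrac{1}{2}$ behavior of $E$ at reduced boundary points to ensure that, when $X$ is a bump aligned with $\nu_E(x_0)$, the divergence integral has a definite sign and is quantitatively large; the dependence on $D$ enters naturally here through the Euclidean comparison within a chart of diameter $D$. Everything else is a straightforward area-formula calculation, so the technical heart of the proof is this geometric construction at $x_0$.
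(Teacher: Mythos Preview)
Your approach has a genuine gap at exactly the point you flag as the ``main obstacle'': the divergence lower bound $\bigl|\int_E \text{div}_h X\, d\HH^n_h\bigr| \geq 1/(c_n D)$ cannot be obtained with constants depending only on $n$ (and $D$). The De~Giorgi structure theorem tells you that $E$ looks like a half-space near $x_0\in\pa^*E$ only at \emph{some} sufficiently small scale $\rho=\rho(E,x_0)$, and this scale depends on the particular set $E$. If you support $X$ in $B_h(x_0,\rho)$ with $\|X\|_\infty\le c_n$, then by the divergence theorem $\int_E \text{div}_h X=\int_{\pa^*E}\langle X,\nu_E\rangle\, d\HH^{n-1}_h$, and for this to have a definite sign of order one you need both the half-space alignment (forcing $\rho$ small, depending on $E$) and $\HH^{n-1}_h(\pa^*E\cap B_h(x_0,\rho))\gtrsim 1$. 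These two requirements are in tension, and in general you only get a lower bound of order $\rho(E,x_0)^{n-1}$. This is precisely the Almgren-type volume-fixing construction whose constants depend on the set itself; the paper explicitly notes in Section~\ref{ssec: reg} that such estimates ``depend on the set $\W_v$ itself, and thus are not directly useful''. Here the lemma demands $\ETA_0,c_n$ depending only on $n$, so this route does not close.

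The paper avoids the problem by a global dilation rather than a localized flow: in each chart $\psi_k$ it sets $E_\lambda=\bigcup_k \psi_k\bigl(\lambda\,\psi_k^{-1}(V_k\cap E)\bigr)$ and uses the intermediate value theorem in $\lambda$. The point is that the volume derivative is $\frac{d}{d\lambda}\big|_{\lambda=1}|E_\lambda|_h\approx n|E|_h\approx n$, a quantity controlled purely by $|E|_h\approx 1$ and the $C^1$-closeness of the metric, with no reference to the fine structure of $\pa^*E$. This yields $|\lambda_0-1|\le c_n\bigl||E|_h-1\bigr|$, and since dilation moves points by at most $|\lambda_0-1|\cdot D$ (the chart has radius $\le D$), both \eqref{eqn: 2 nbhd} and \eqref{eqn: aniso estimate} follow from direct Jacobian computations. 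If you want to salvage your flow idea, you would have to take $X$ to be the radial field $X(x)=x$ in each chart rather than a bump at a single boundary point; but that is exactly the infinitesimal version of the paper's dilation, not the localized construction you describe.
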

\begin{proof}
Fix $\ETA_0>0$ to be specified later in the proof and let $\ETA\in (0,\ETA_0)$ and $D >1$.
 For each $k \in 1,\dots, K$, 
since $\text{diam}_hV_k \leq D,$ we can find $x_k \in V_k$ with $V_k\subset B_{g_{euc}}(x_k, D).$
 Let $\psi_k: B_{g_{euc}}(0,D) \to  B_h(x_k, D)$ be the normal coordinate map at $x_k$. 
 For $\lambda \in [\frac1 2,\frac3 2],$ define the set $E_\lambda\subset M$ by
$$  E_{\lambda}:= \mlcup_{k=1}^K \psi_k\left( \lambda\, \psi_k^{-1}\big( V_k \cap E\big)\right).$$
Note that $E_1 = E$ and that $E_\lambda$ is well-defined for the full interval since we have assumed  $\Nb{h}{ E\cap V_k}{1}\subset V_k$.
Moreover, letting $\lambda_+ =(1-\ETA )^{-(n+1)/n}$ and $\lambda_-=(1-\ETA )^{-(n+1)/n}$ and recalling  $(1-\ETA)\leq |E|_h\leq (1+\ETA)$ by assumption, we compute in coordinates to find
 \begin{align*}
 |E_{\lambda_+}|_h& \geq \lambda_+^n(1-\ETA)^{n}|E|_h\geq \lambda_+^n(1-\ETA)^{n+1}\geq 1, \text{ and }\\
  |E_{\lambda_-}|_h & \leq \lambda_-^n(1+\ETA)^{n}|E|_h\leq \lambda_-^n(1+\ETA)^{n+1}\leq 1. 	
 \end{align*}
The function $\lambda\mapsto |E_{\lambda}|_h$ is continuous, and thus we can find
 we can find $\lambda_0  \in [\lambda_-,\lambda_+] \subset [1-c_n\ETA,1+c_n\ETA]$ such that $|E_{\lambda_0}|_h=1$. We set $\Emod = E_{\lambda_0}$. 
 
 In order to prove the remaining properties of $\Emod,$ we prove the following key fact:
 	\begin{equation}
 		\label{eqn: lambda bound2}
	|\lambda_0-1| \leq c_n||E|_h -1|
\end{equation}	
provided $\ETA_0>0$ is small enough in terms of $n$.  
To this end, we let $\mathcal{J}(x) = \sqrt{\det h_{ij}(x)}$ be the volume form of $h$ in the coordinates defined by $\psi$ and let $G_k = \psi_k^{-1}(V_k \cap E)\subset \R^n$. We therefore have 
	\begin{align}\label{eqn: area2}
		|E_{\lambda}|_h- |E|_h 
		= \sum_{k=1}^K \left\{ \int_{\lambda G_k} \mathcal{J}(x)  \,dx  -\int_{G_k} \mathcal{J}(x)  \,dx \right\}=\sum_{k=1}^K \int_{G_k}\left\{  \lambda^n \mathcal{J}(\lambda x) -  \mathcal{J}(x) \right\}\,dx.
	\end{align}
When $\lambda \in [1, 1 +c_n \ETA]$, 	we add and subtract $\lambda^n \mathcal{J}(x) $ and use the fundamental theorem of calculus to find
	\begin{align*}
	 \lambda^n \mathcal{J}(\lambda x)  - \mathcal{J}(x) &\geq  \left(\lambda^n -1\right)  \, \mathcal{J}(x) - \lambda^n \left(\mathcal{J}(\lambda x) -\mathcal{J}(x)\right)\\
	& \geq (\lambda-1)\, \mathcal{J}(x)  - (\lambda-1)\sup\left\{|\nabla \mathcal{J}(tx)\cdot x| : t \in [1-c_n\ETA, 1+c_n\ETA]\right\}
	\end{align*}
	for any fixed $x$.
	Since  \eqref{eqn: metrics close 2} guarantees that $\mathcal{J}(x) \geq 1-\ETA$ and that the term in brackets is bounded above by $1/2$ for all $x \in B(0,D)$ provided $\ETA_0>0$ is small enough depending on $n$, we see that $\lambda^n \mathcal{J}(\lambda x)  - \mathcal{J}(x) \geq c_n (\lambda -1) \mathcal{J}(x)$.  Applying this inequality to the right-hand side of \eqref{eqn: area2} we find that \eqref{eqn: lambda bound2} holds in this case. The analogous argument in the case when $\lambda \in [1-c_n\ETA, 1]$ shows that $\mathcal{J}(x)- \lambda^n \mathcal{J}(\lambda x)  \geq  (1-\lambda)c_n \mathcal{J}(x),$	which applied to \eqref{eqn: area2} implies \eqref{eqn: lambda bound2} in this case as well.

Next, with  \eqref{eqn: lambda bound2} in hand, we show the containment
\begin{equation}
	\label{eqn: contain E}
\Emod \Delta E \subset \Nb{h}{\pa E}{c_nD\ETA}\,.
\end{equation}
Indeed,  if $x \in E \setminus E_{\lambda_0}$, then for some $k \in \{1,\dots, K\}$, we have $y:= \psi_k^{-1}(x) \in G_k$ but $y\not\in \lambda_0 G_k$. 
Here we again let $G_k = \psi_k^{-1}(V_k \cap E)\subset\R^n$.  So,  $ty \in \partial G_k$ for some $t \in [1,\lambda_0]$ or $t \in [\lambda_0,1]$, and thus by \eqref{eqn: lambda bound2},  
$$d_{g_{euc}}(y, \partial G_k) \leq |t-1||y| \leq |\lambda_0 -1||y| \leq c_n D \ETA.$$
 Hence by \eqref{eqn: metrics close 2}, we have $d_h(x,\partial E) \leq c_n \ETA D$ (up to doubling $c_n$).  The analogous argument holds for any $x \in E_{\lambda_0} \setminus E$, and thus \eqref{eqn: contain E} holds.

Finally, we show the estimate for the anisotropic perimeter. Similarly to \eqref{eqn: area2}, we have
 \begin{equation}
	\label{eqn: F exp}
	\begin{split}
		\F(E_{\lambda}) - \F(E) 
		& = \sum_{k=1}^K\left\{ \int_{ \partial^* \lambda G_k }\hat{F}(x, \nu_{G_k}(x))  \,d \mathcal{H}^{n-1}_{g_{euc}}
	 -  \int_{\partial^* G_k} \hat{F}(x, \nu_{G_k}(x)) \,d \mathcal{H}^{n-1}_{g_{euc}}\right\}\\
		&	= \sum_{k=1}^K\left\{ \int_{ \partial^* G_k }\lambda^{n-1} \hat{F}(\lambda x, \nu_{G_k}(x))  - \hat{F}(x, \nu_{G_k}(x)) \,d \mathcal{H}^{n-1}_{g_{euc}}\right\}
	\end{split}
\end{equation}
where here we define the function $\hat{F}(x, \nu) = F^*(x,\nu)\sqrt{\det \tilde{h}_{ij}(x)}$ where $\tilde{h}_{ij}$ are the coefficients of the metric on $\pa^* E$ induced by $h$.
			Adding and subtracting terms and using $\lambda \in [1-c_n, 1 +c_n \ETA]$, \eqref{eqn: metrics close 2}, and \eqref{eqn: aniso bound}, a Taylor expansion shows that for any fixed $x$ and $\nu \in S^{n-1}$ that
	\begin{align*}
	\left| \lambda^n \hat{F}(\lambda x,\nu )  - \hat{F}(x,\nu ) \right| 
	&\leq  \left|\lambda^{n-1} -1 \right| \, \hat{F}(x,\nu ) + \lambda^{n-1}\sqrt{\det \tilde{h}_{ij}(\lambda x)}  \left| F^*(\lambda x, \nu ) -F^*(x, \nu )\right|\\
	& + \lambda^{n-1} F^*(x,\nu) \left|\sqrt{\det \tilde{h}_{ij}(\lambda x)} -\sqrt{\det \tilde{h}_{ij}(x)}\right| \leq c_n D(\lambda -1)\hat{F}(x, \nu),
	\end{align*}
 provided $\ETA>0$ is chosen sufficiently small depending on $n$. Combining this with \eqref{eqn: F exp} and recalling \eqref{eqn: lambda bound2}, we conclude that the estimate \eqref{eqn: aniso estimate} holds. This completes the proof.
\end{proof}

\section{Comparing projections of tangent Wulff shapes}\label{sec: app}

The following proposition  compares projections of translated tangent Wulff shapes via the exponential map at different points. 
\begin{proposition}\label{prop: appendix tangent wulff}
	There exist $C = C(g, F)$ and $\rho_0 = \rho_0 (g, F)>0$ such that the following holds. Fix $\rho \in [0,\rho_0)$, choose $x_0, x_1 \in M$ with $d_g(x_0, x_1)<\rho$, and let $K_{x_0}\subset T_{x_0}M$ and $K_{x_1} \subset T_{x_1} M$ be defined as in \eqref{eqn: tangent wulff A}. 
	Then for any $0<r<\rho_0 $, letting $z_1= \exp^{-1}_{x_0}(x_1)$, we have 
	\begin{align*}
		d_{H,g} \big(\exp_{x_1} (\partial r K_{x_1} ) ,  \ \exp_{x_0} ( \partial r K_{x_0} + z_1) \big)& < C\rho\, r,\\ 
		d_{H,g} \big(\exp_{x_1} (r  K_{x_1} ), \ \exp_{x_0} ( r K_{x_0} + z_1) \big) &< C \rho \, r. 
	\end{align*}
\end{proposition}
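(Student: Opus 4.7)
The plan is to split the comparison into two independent estimates: first, that the Wulff shapes $K_{x_0}$ and $K_{x_1}$ are close once their ambient tangent spaces are identified via parallel transport; second, that the two exponential maps $\exp_{x_0}(z_1+\,\cdot\,)$ and $\exp_{x_1}$ agree to leading order. The two are then combined. Throughout, fix $\rho_0 < \mathrm{inj}_g(M)/2$ so that all the exponential maps in question are diffeomorphisms on the relevant balls, and let $\tau : T_{x_0}M \to T_{x_1}M$ be parallel transport along the unique minimizing geodesic from $x_0$ to $x_1$, a linear isometry.

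\emph{Step 1 (comparison of Wulff shapes).} The unnormalized Wulff shape $\hat K_x$ in \eqref{eqn: tangent wulff A} is the intersection of half-spaces $\{y:g_x(y,\nu) < F_x(\nu)\}$. By the $C^1$-regularity of $F$ in $x$ asserted in \eqref{eqn: F C1 reg}, together with the uniform convexity of $F_x^2$, these half-spaces move in a $C^1$ fashion with $x$, yielding
\[
d_{H,g_{x_0}}\bigl(\hat K_{x_0},\,\tau^{-1}\hat K_{x_1}\bigr) \leq C\rho.
\]
The two-sided bounds $\m \leq F \leq \M$ give uniform upper and lower bounds on $|\hat K_x|_{g_x}$, so normalizing to unit volume preserves the estimate (with a possibly larger constant):
$d_{H,g_{x_0}}\bigl(K_{x_0},\tau^{-1}K_{x_1}\bigr) \leq C\rho$.
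Since $K_x$ is a bounded convex body with nonempty interior, the same estimate holds for the boundaries $\partial K_{x_0}$ and $\tau^{-1}\partial K_{x_1}$.

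\emph{Step 2 (comparison of exponential maps).} On a small neighborhood of $(0,0)$ in $T_{x_0}M\times T_{x_0}M$, define the two smooth maps
\[
G(z,w) = \exp_{\exp_{x_0}(z)}\!\bigl(\tau_z w\bigr), \qquad H(z,w) = \exp_{x_0}(z + w),
\]
where $\tau_z:T_{x_0}M\to T_{\exp_{x_0}(z)}M$ denotes parallel transport along the radial geodesic. They satisfy $G(0,w)=H(0,w)=\exp_{x_0}(w)$ and $G(z,0)=H(z,0)=\exp_{x_0}(z)$, so in normal coordinates at $x_0$ the difference $G-H$ is a smooth map vanishing on both coordinate hyperplanes. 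A mixed Taylor expansion then gives
\[
d_g\bigl(G(z,w),H(z,w)\bigr) \leq C(g)\,|z|\,|w|.
\]
Applying this with $z=z_1$ (so $|z_1|=d_g(x_0,x_1)<\rho$) and $w\in T_{x_0}M$ with $|w|\leq Cr$ (where $C$ is the uniform diameter bound on $K_x$ coming from $\m,\M$) produces the key estimate
\[
d_g\bigl(\exp_{x_1}(\tau w),\,\exp_{x_0}(z_1 + w)\bigr) \leq C\,\rho\, r.
\]

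\emph{Step 3 (combination).} For the boundary estimate, pick any $p=\exp_{x_1}(ry_1)\in \exp_{x_1}(\partial r K_{x_1})$. By Step 1, there exists $y_0\in \partial K_{x_0}$ with $|\tau y_0 - y_1|_{g_{x_1}}\leq C\rho$, and the uniform Lipschitz continuity of $\exp_{x_1}$ on balls of radius $\lesssim r$ gives $d_g(\exp_{x_1}(ry_1),\exp_{x_1}(r\tau y_0))\leq Cr\rho$. Combining with Step 2 applied to $w=ry_0$, we get
$d_g(p,\exp_{x_0}(z_1+ry_0))\leq C\rho r$, and the point $\exp_{x_0}(z_1+ry_0)$ belongs to $\exp_{x_0}(\partial r K_{x_0}+z_1)$. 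The reverse inclusion is symmetric, giving the first bound. The solid-set bound follows by the identical argument with $\partial K_{x_j}$ replaced by $K_{x_j}$ throughout.

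The main obstacle is Step 2, specifically justifying the bilinear Taylor bound with explicit constants depending only on $(M,g)$: one must verify that $G$ and $H$ extend smoothly to a uniform neighborhood of $(0,0)$ (independent of the base point $x_0$) and extract the $|z||w|$ factor from the vanishing of $G-H$ on the two coordinate axes. Step 1 is conceptually simpler but requires care in extracting $C^1$-Hausdorff continuity of $\hat K_x$ from the $C^1$-dependence of $F_x$ and the uniform convexity of $F_x^2$.
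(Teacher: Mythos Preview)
Your argument is correct, and it follows a genuinely different route from the paper's.

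The paper introduces the dual integrand $F_*$ and the associated Finsler-type distance $d_{F_*}$ on $M$, and uses the anisotropic metric ball $B_{F_*}(x_1,r)\subset M$ as a common intermediate object: Lemma~\ref{lem: appendix hausdorff} shows that both $\exp_{x_0}(\partial rK_{x_0}+z_1)$ and $\exp_{x_1}(\partial rK_{x_1})$ are $C\rho r$-close to $\partial B_{F_*}(x_1,r)$, and the proposition follows by the triangle inequality. This in turn rests on transferring the $C^1$ regularity of $F$ to $F_*$ (Lemma~\ref{lem: app Fstar reg}) and comparing $d_{F_*}$ with the frozen tangent gauge $F_*(x_0,\cdot)$ (Lemma~\ref{last lemma}). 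Your approach bypasses the Finsler distance entirely: Step~1 compares the support functions $F_{x_0}$ and $F_{x_1}\circ\tau$ directly (Hausdorff distance of convex bodies equals sup-distance of support functions), and Step~2 handles the geometric distortion by the clean bilinear Taylor bound for $G-H$, which is legitimate because a $C^2$ map vanishing on both coordinate hyperplanes factors as $f(z,w)=\int_0^1\!\int_0^1 D_zD_wf(sz,tw)[z,w]\,ds\,dt$. Your decomposition is arguably more transparent and avoids building the auxiliary metric structure; the paper's route has the virtue that the intermediate object $B_{F_*}(x_1,r)$ lives intrinsically on $M$.

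One minor remark: in Step~1 you invoke the uniform convexity of $F_x^2$, but you do not actually need it. The estimate $d_{H}(\hat K_{x_0},\tau^{-1}\hat K_{x_1})\le C\rho$ follows purely from the $C^1$ dependence of the support function on $x$ and the general fact for convex bodies that Hausdorff distance equals the $C^0$ distance of support functions; passing to boundaries only uses that $K_x$ contains a ball of radius $\m$ and is contained in one of radius $\M$. This matches the paper's observation (stated just after the proposition) that only $C^1$ dependence on $x$ and convexity, positivity, and one-homogeneity in $\nu$ are used, with no ellipticity or smoothness in $\nu$ required.
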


This section is dedicated to proving Proposition~\ref{prop: appendix tangent wulff}. As in the remainder of the paper we assume that $F$ is a $C^{2,\alpha}$ elliptic integrand. However, we note that the proof of Proposition~\ref{prop: appendix tangent wulff} only requires that $F$ has $C^1$ dependence on $x$ and for each $x_0\in M$, $F(x_0,\cdot)$ is a convex one-homogenous function that is positive expect a the origin, with no smoothness or ellipticity needed.

Together with the metric $g$, the integrand $F$ induces a dual integrand $F_*:TM\to \R$ via
\[
F_*(x,z) = \sup\big\{ g_x(z,\nu) \, : \, \nu \in T_xM, \ F(x,\nu) \leq 1\big\}.
\] 
Given a pair of points $x_0, x_1 \in M$,  let
\[
d_{F_*}(x_0, x_1) = \inf \left\{\int_0^1 F_*(\gamma(t) , \dot 
\gamma(t))  \ : \ \gamma :[0,1]\to M, \ \gamma(0)=x_0,\ \gamma(1) = x_1 \right\}.
\]
If  $F$ is not symmetric, we may have $d_{F_*}(x_0, x_1) \neq d_{F_*}(x_1, x_0)$. Nonetheless we may consider the $F_*$-balls 
$$B_{F_*} (x_0, \rho) = \{ x_1 \in M : d_{F_*}(x_0 , x_1)< \rho\}\,.
$$ 
Recall the quantities $\m$ and $\M$ defined in Section~\ref{ssec: integrands} and note that
$\m^{-1} = \sup \{ F_*(x,z) : x \in M , g_x(z,z) =1\}$ and $\M^{-1} = \inf \{ F_*(x,z) : x \in M , g_x(z,z) =1\}$
and therefore 
\begin{equation}
	\label{eqn: compare g, Fstar}
	\begin{split}
		\M^{-1} d_g(x_0, x_1) &\leq d_{F_*}(x_0, x_1) \leq \m^{-1} d_g(x_0, x_1), \\
		  B_g(x_0, \m \rho) &
\subset B_{F_*}(x_0,\rho) \subset B_g(x_0, \M \rho)\,.
	\end{split}
\end{equation}
Proposition~\ref{prop: appendix tangent wulff} will follow from the next lemma and the triangle inequality.
\begin{lemma}\label{lem: appendix hausdorff}
	There exist $C= C(g,F)>0$ and $\rho_0=\rho_0(g,F)>0$ such that the following holds. Fix $x_0, x_1 \in M$ with $d_g(x_0,x_1) \leq \rho \leq  \rho_0$. Then for all $0< r\leq \rho_0$,  letting $z_1 = \exp_{x_0}^{-1}(x_1),$ we have 
	\begin{align*}
		d_{H,g}\big(\exp_{x_0}\big(\partial r K_{x_0}+z_1\big) ,\ \partial B_{F_*}\big(x_1,r\big)\big) &\leq C \rho \, r,\\
		d_{H,g}\big(\exp_{x_0}\big(r K_{x_0}+z_1\big) ,\  B_{F_*}\big(x_1,r\big)\big) &\leq C \rho\, r.
	\end{align*}
\end{lemma}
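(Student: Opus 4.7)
The plan is to work in normal coordinates at $x_0$, transfer both sides of the claimed Hausdorff inequality to subsets of $\R^n$, and compare them using Taylor expansions of the metric $g$ and of the dual integrand $F_*$.

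First, I would fix $\rho_0$ smaller than a definite fraction of $\mathrm{inj}_g M$ and use the normal coordinate map $\psi = \exp_{x_0}: B_{g_{euc}}(0, 2\rho_0) \to B_g(x_0, 2\rho_0)$, in which $g_{x_0}$ agrees with $g_{euc}$, the metric expansion $g_{ij}(z) = \delta_{ij} + O(|z|_{g_{euc}}^2)$ holds, and the pulled-back integrand satisfies $\|\psi^* F(\cdot,\nu) - F_{x_0}(\nu)\|_{C^0(B_{g_{euc}}(0,2\rho_0))} \leq C \cdot 2\rho_0$ uniformly in $\nu \in S^{n-1}$, by \eqref{eqn: F C1 reg}. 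By continuous dependence of the dual norm on its primal and on the metric, the same type of estimate propagates to $F_*$:
\[
\big| F_*(\psi(z), v) - F_{x_0,*}(v)\big| \leq C |z|_{g_{euc}} \cdot |v|_{g_{euc}}
\]
for all $z \in B_{g_{euc}}(0, 2\rho_0)$ and all $v$, with $C = C(g,F)$.

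Next, the heart of the argument is to establish, for any $p, q \in B_g(x_0, 2\rho_0)$ with coordinates $z_p, z_q$,
\[
d_{F_*}(p, q) = F_{x_0,*}(z_q - z_p) + O(\rho_0) \cdot |z_q - z_p|_{g_{euc}}.
\]
The upper bound comes from testing against the coordinate straight-line $z(t) = z_p + t(z_q - z_p)$: its Finsler $F_*$-length equals $F_{x_0,*}(z_q - z_p)$ plus an error of the claimed size by the expansion of $F_*$. For the lower bound, any competitor curve $\gamma(t) = \psi(z(t))$ remaining in the coordinate domain satisfies
\[
L_{F_*}(\gamma) \;\geq\; \int_0^1 F_{x_0,*}(\dot z(t))\, dt \;-\; C \sup_t |z(t)|_{g_{euc}} \int_0^1 |\dot z(t)|_{g_{euc}}\, dt \;\geq\; F_{x_0,*}(z_q - z_p) - C\rho_0 \int_0^1 |\dot z|_{g_{euc}} dt,
\]
where the first step uses the pointwise comparison of $F_*$ with $F_{x_0,*}$ and the second uses that $F_{x_0,*}$ (as a constant-coefficient norm on $\R^n$) is minimized by straight segments. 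Near-minimizing $F_*$-geodesics from $p$ to $q$ have Euclidean length comparable to $|z_q - z_p|_{g_{euc}}$ (using $F_* \geq \M^{-1} |\cdot|_g$), and must stay in $B_g(x_0, C(\rho + r)) \subset B_g(x_0, 2\rho_0)$ after shrinking $\rho_0$, legitimizing the use of the coordinate bound.

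Applying this distance estimate with $p = x_1$ (coordinate $z_1$) and letting $q$ range over points with $d_{F_*}(x_1, q) \lessapprox r$, the Finsler ball $B_{F_*}(x_1, r)$ corresponds in coordinates to $z_1 + \{v : F_{x_0,*}(v) < r\}$ up to Hausdorff error $O(\rho \cdot r)$; this gives the Hausdorff bound once one verifies that transporting back to $M$ via $\exp_{x_0}$ and comparing $d_{H, g_{euc}}$ in coordinates with $d_{H,g}$ contributes only lower-order errors, thanks to $g(z) = g_{euc} + O(|z|^2)$ on the relevant ball. Finally, by the definition \eqref{eqn: tangent wulff A}, $\{F_{x_0,*} < 1\} = \hat K_{x_0}$ is a fixed scalar multiple of $K_{x_0}$ in $T_{x_0}M$, so the two sets in the statement are identified up to this scalar factor, which is absorbed into $C$.

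The main obstacle I anticipate is the lower-bound step for $d_{F_*}$: one must carry out the ``straight lines are minimizing'' argument for $F_{x_0,*}$ while $F_*$ is non-autonomous, ensure near-minimizing competitors remain in the coordinate patch (ruling out long excursions that could inflate $\int |\dot z|_{g_{euc}} dt$), and handle the possible asymmetry of $F_*$ (so the roles of $x_1$ and $q$ in $d_{F_*}(x_1, q)$ are not interchangeable), which is what forces both statements of the lemma -- for boundaries and for full sets -- to be written separately. A secondary bookkeeping issue is tracking the volume-normalization factor $|\hat K_{x_0}|_{g_{x_0}}^{1/n}$ relating $K_{x_0}$ and $\hat K_{x_0}$ uniformly in $x_0$, which is harmless because this factor depends continuously on $x_0 \in M$ and thus is bounded above and below by constants depending only on $g$ and $F$.
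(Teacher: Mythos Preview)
Your approach is essentially the same as the paper's, which packages the argument into three preparatory lemmas---transferring $C^1$ regularity in $x$ from $F$ to $F_*$, confining almost-$F_*$-geodesics to a ball of controlled radius, and then proving the two-sided estimate $d_{F_*}(\psi(z_1),\psi(z_2)) = (1+O(\rho))\,F_{*}(x_0,z_2-z_1)$ using a straight-line competitor for the upper bound and precisely the curve-pullback argument you describe for the lower bound---before applying this comparison pointwise just as you outline. The normalization discrepancy you flag between $K_{x_0}$ and $\hat K_{x_0}=\{F_{x_0,*}<1\}$ is in fact present in the paper's own proof, which tacitly identifies $\partial r K_{x_0}$ with $\{F_{*}(x_0,\cdot)=r\}$; the lemma should be read with $\hat K_{x_0}$, and this is harmless downstream since the $B_{F_*}$ intermediary cancels in Proposition~\ref{prop: appendix tangent wulff}.
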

Before proving Lemma~\ref{lem: appendix hausdorff}, let us see how it implies Proposition~\ref{prop: appendix tangent wulff}. 
\begin{proof}[Proof of Proposition~\ref{prop: appendix tangent wulff}]
Let  $\rho_0>0$ be chosen according to Lemma~\ref{lem: appendix hausdorff}. Fix $r, \rho\leq \rho_0$ and choose $x_0, x_1 \in M$ with $d_g(x_0, x_1) \leq \rho$. First, we apply Lemma~\ref{lem: appendix hausdorff} to find 
\begin{align*}
	d_{H,g}(\exp_{x_0}\big(\partial \rho {K}_{x_0} + z) ,\ \partial B_{F_*}(x_1 , \rho)\big) \leq C \rho\, r,\\
	d_{H,g} \big(\exp_{x_0} (\rho {K}_{x_0} +z) ,\  B_{F_*}(x_1 ,\rho) \big) \leq  C \rho\, r.
\end{align*}
Next, apply Lemma~\ref{lem: appendix hausdorff} with the roles of both $x_0$ and $x_1$ played by $x_1$ (and thus $z_1 = 0 \in T_{x_1}M$) to find 
\begin{align*}
	d_{H,g}\big(\exp_{x_1}\big(\partial r{K}_{x_1}\big) ,\ \partial B_{F_*}\big(x_1 , r\big)\big) \leq C \rho\, r,\\
	d_{H,g} \big(\exp_{x_0} \big(r {K}_{x_1}\big) ,\  B_{F_*}\big(x_1 ,r\big) \big) \leq C \rho\,r\,.
	 \end{align*}
We apply the triangle inequality to conclude the proof. 
\end{proof}
We need three preparatory lemmas to prove Lemma~\ref{lem: appendix hausdorff}. The first transfers the assumed regularity on $F$ in the variable $x$ to regularity of $F_*$ in  $x$.  Let $r_0= \text{inj}_g(M)/2$. Fix $x_0 \in M$ and let $\psi = \exp_{x_0}$ and consider the pulled-back  integrand 
$\psi^*F : B_{g_{x_0}}(0,r_0) \times T_{x_0}M \to \R$  defined as in \eqref{eqn: pull back}. 
The  regularity \eqref{eqn: F C1 reg} of $F$ in $x$ implies, in particular, that for all $\nu \in T_{x_0}M$ with $g_{x_0}(\nu, \nu)=1$
and
 $\rho<r_0$ we have 
\begin{equation}
	\label{eqn: regularity on F 2}
	\| \psi^*F(\ \cdot \ ,\nu) - \psi^*F(0,\nu)\|_{C^0(B(0,\rho))} \leq \textsf{C}\, \rho
\end{equation}
for a constant $\textsf{C}= \textsf{C}(F,g)$. Define the pulled-back dual integrand  $\psi^*F_*: B_{g_{x_0}}(0,r_0)\times T_{x_0}M \to \R $ by 
\[
\psi^*F_*(y,z) = F_*(\psi(y), d\psi_y(z)).
\]
\begin{lemma}\label{lem: app Fstar reg}
 There exists $\rho_2= \rho_2(g, F)$ and $C=C(g,F)$ such that the following holds.  Let $F: TM\to\R$ be an elliptic integrand satisfying \eqref{eqn: regularity on F 2}.   Then for each $x_0 \in M$ and $z \in T_{x_0}M$ with $g_{x_0}(z,z)=1$, 
   and $\rho<\rho_2$,
    we have 
	\begin{equation}
		\label{eqn: reg dual norm} 
	\| \psi^*F_*(\, \cdot\, , z) - \psi^*(F_*(0,z))\|_{C_0(B(0,\rho ))} \leq C\,\rho.
	\end{equation}
	Here we let $\psi  =\exp_{x_0} :T_{x_0}M\to M$.
\end{lemma}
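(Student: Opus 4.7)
The plan is to pull everything back via $\psi = \exp_{x_0}$ to a fixed domain $B(0,r_0) \subset T_{x_0}M$ and exploit the convex-duality relationship between $\psi^*F$ and $\psi^*F_*$ with respect to the pulled-back metric $\psi^*g$. Writing $H(y,\nu) = \psi^*F(y,\nu)$ and $H_*(y,z) = \psi^*F_*(y,z)$ and substituting $\nu \mapsto d\psi_y \tilde\nu$ in the sup defining $F_*$, a direct calculation gives
\[
H_*(y,z) = \sup\bigl\{ (\psi^*g)_y(z,\tilde\nu) \,:\, \tilde\nu \in T_{x_0}M,\ H(y,\tilde\nu) \leq 1 \bigr\}.
\]
Since $(\psi^*g)_0 = g_{x_0}$ and $\psi^*g$ is smooth, we have $\|(\psi^*g)_y - g_{x_0}\|_{\mathrm{op}} \leq C|y|$ on $B(0,r_0)$. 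Combined with \eqref{eqn: regularity on F 2} and $1$-homogeneity of $H(y,\cdot)$, we obtain $|H(y,\nu) - F_{x_0}(\nu)| \leq \textsf{C}\rho\,|\nu|_{g_{x_0}}$ for all $|y|\leq \rho$ and all $\nu$. The goal is then to show $|H_*(y,z) - F_{x_0,*}(z)| \leq C\rho$ for $|y|\leq\rho$ and $|z|_{g_{x_0}}=1$, which is exactly \eqref{eqn: reg dual norm}.

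A key preparatory observation is that the admissible sets are uniformly bounded. Since $F_{x_0}(\nu) \geq \m\,|\nu|_{g_{x_0}}$, for $\rho_2$ small enough (depending on $\m$ and $\textsf{C}$) we have $H(y,\nu) \geq (\m/2)\,|\nu|_{g_{x_0}}$ whenever $|y|\leq \rho \leq \rho_2$. Hence any $\tilde\nu$ with $H(y,\tilde\nu)\leq 1$ satisfies $|\tilde\nu|_{g_{x_0}}\leq 2/\m$, and in particular the sup defining $H_*(y,z)$ is attained on a compact set and is bounded by $C(g,F)$. The same applies at $y=0$.

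For the comparison, fix $y$ with $|y|\leq \rho$ and $z$ with $|z|_{g_{x_0}}=1$, and let $\nu_y$ attain $H_*(y,z)$, so that $H(y,\nu_y)\leq 1$, $|\nu_y|_{g_{x_0}}\leq 2/\m$, and $(\psi^*g)_y(z,\nu_y) = H_*(y,z)$. By the estimate on $H-F_{x_0}$, we have $F_{x_0}(\nu_y) \leq 1 + C\rho$, so $\nu_y/(1+C\rho)$ is admissible for $F_{x_0,*}(z)$. Then
\[
F_{x_0,*}(z) \geq \frac{g_{x_0}(z,\nu_y)}{1+C\rho} \geq \frac{(\psi^*g)_y(z,\nu_y) - C\rho\,|z|_{g_{x_0}}|\nu_y|_{g_{x_0}}}{1+C\rho} \geq H_*(y,z) - C\rho,
\]
using the metric closeness in the middle step and the boundedness of $H_*(y,z)$ to absorb the $(1+C\rho)^{-1}$ factor in the last. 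The reverse inequality $H_*(y,z) \geq F_{x_0,*}(z) - C\rho$ is obtained by an identical argument starting from a maximizer $\nu_0$ for $F_{x_0,*}(z)$, showing that $\nu_0/(1+C\rho)$ is admissible for $H_*(y,z)$.

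The only real subtlety is bookkeeping: one must keep track of the fact that $H_*$ is defined using the varying metric $(\psi^*g)_y$, so two sources of error—the integrand perturbation in $\{H(y,\cdot)\leq 1\}$ and the metric perturbation inside the inner product—both contribute and must be controlled simultaneously. Everything else is routine once the admissible sets are confirmed to lie in a fixed compact region, which is why the lower bound $\m$ on $F$ enters crucially to fix $\rho_2$.
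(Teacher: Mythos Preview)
Your proof is correct and follows essentially the same duality argument as the paper: pick a maximizer for one side of the dual norm, rescale it by a factor $1/(1+C\rho)$ to make it admissible for the other, and control both the change in the admissible set (via \eqref{eqn: regularity on F 2}) and the change in the inner product (via closeness of $\psi^*g$ to $g_{x_0}$). The only cosmetic difference is that you first establish uniform boundedness of the admissible sets to guarantee attainment of the supremum, whereas the paper simply selects a maximizer directly.
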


\begin{proof}
	Choose $\rho_2 \leq \text{inj}_gM/2$ small enough depending on $g$ such that 
	\begin{equation}\label{eqn: pull back 1}
		(1-\rho) g_{x_0} \leq \psi^* g \leq (1+\rho) g_{x_0} \qquad\text{ in }B_{g_{x_0}}(0,\rho) \text{ for all } \rho<\rho_2.
	\end{equation}
For any $z \in T_{x_0} M$ with $g_{x_0}(z,z)=1$, choose $\nu_z \in T_{x_0}M$ such that $F(x_0, \nu_z) =1$ and 
	\begin{equation}
		\label{eqn: equal}
	g_{x_0}(z,\nu_z)  = F_*(x_0, z)=  \psi^*F_*(0,z) .
	\end{equation}
So, choosing any $y \in  B_{g_{x_0}}(0,\rho)\subset T_{x_0}M$, 
the assumption \eqref{eqn: regularity on F 2} implies that $\psi^*F(y ,\nu_z) \leq 1+\textsf{C} \rho$ with $\textsf{C}$ as in \eqref{eqn: regularity on F 2}.
So, using $\overline{\nu}_z := {\nu}_z/\psi^*F(y,\nu_z)$ as a competitor in the definition of $\psi^*F_*(y, z)$, we have  
\begin{align*}
	\psi^*F_*(y, z)= \sup\big\{ (\psi^*g)_y (z,\nu)  : \psi^*F (y, \nu_z) \leq 1\big\} &\geq (\psi^*g)_y\left( z, \bar{{\nu}}_z\right)= \frac{(\psi^*g)_y(z, \nu_z)}{\psi^*F(y,\nu_z)}\geq \frac{(\psi^*g)_y(z, \nu_z)}{1+\textsf{C}\rho}.
\end{align*}	
By \eqref{eqn: pull back 1} and \eqref{eqn: equal}, we have $(\psi^*g)_y(z, \nu_z) \geq (1-\rho)g_{x_0}(z, \nu_z) = (1-\rho) \psi^*F_*(y, z)$, and thus
\begin{align*}
		\frac{\psi^*F_*(y, z)}{\psi^*F_*(0,z)}&\geq \frac{1-\rho}{1+\textsf{C}  \rho} \geq 1 -2(1+\textsf{C})\rho  ,
\end{align*}
where the final inequality holds for $\rho$ small enough depending on $\textsf{C}$ and thus on $F. g$.
The same argument holds with the roles of $0$ and $y$ swapped. Together these inequalities along with \eqref{eqn: compare g, Fstar} show that 
\[
| \psi^*F_*(0,z)-\psi^*F_*(y,z)| \leq 4(1+ \textsf{C}) \, \psi^*F_*(0,z)\rho  \leq \frac{4(1+\textsf{C})}{\m} \, \rho  \,.
\]
This proves the lemma.
\end{proof}
The next simple lemma will allow us to pull back (almost) $F_*$-geodesics via the exponential map.
\begin{lemma}\label{lem: appendix geodesic in ball}
 Fix $x_1, x_2 \in M$ and let $\hat{\gamma}: [0,1]\to M$ be a curve with $\hat{\gamma}(0)=x_1$ and $\hat{\gamma}(1)=x_2$ such that $\int_0^1 F_*(\hat{\gamma}(t) , \dot{\hat{\gamma}}(t) ) \,dt \leq 2 d_{F_*}(x_1, x_2)$. Then $d_g(x_1, \hat{\gamma}(t))\leq \frac{2\M}{\m} d_g(x_1, x_2)$ for all $t \in [0,1]$.
\end{lemma}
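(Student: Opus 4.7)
The plan is to prove this via a direct chain of inequalities using only the comparison between $d_g$ and $d_{F_*}$ recorded in \eqref{eqn: compare g, Fstar}, together with the subadditivity of the $F_*$-length functional along the curve $\hat\gamma$.

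First I would fix $t \in [0,1]$ and note that the restriction $\hat\gamma|_{[0,t]}$ (reparametrized on $[0,1]$) is an admissible path from $x_1$ to $\hat\gamma(t)$ in the definition of $d_{F_*}$. Since $F_*$ is positive, the integrand is nonnegative, so
\begin{equation*}
d_{F_*}(x_1, \hat\gamma(t)) \leq \int_0^t F_*(\hat\gamma(s), \dot{\hat\gamma}(s))\,ds \leq \int_0^1 F_*(\hat\gamma(s), \dot{\hat\gamma}(s))\,ds \leq 2 d_{F_*}(x_1, x_2),
\end{equation*}
where the last step uses the hypothesis on $\hat\gamma$.

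Next I would invoke \eqref{eqn: compare g, Fstar}: on the left it gives $d_g(x_1, \hat\gamma(t)) \leq \M\, d_{F_*}(x_1, \hat\gamma(t))$, and on the right it gives $d_{F_*}(x_1, x_2) \leq \m^{-1} d_g(x_1, x_2)$. Chaining these with the previous display yields
\begin{equation*}
d_g(x_1, \hat\gamma(t)) \leq \M\, d_{F_*}(x_1, \hat\gamma(t)) \leq 2\M\, d_{F_*}(x_1, x_2) \leq \tfrac{2\M}{\m}\, d_g(x_1, x_2),
\end{equation*}
which is the claimed bound.

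There is no genuine obstacle here: the lemma is a routine consequence of the two-sided comparison $\m \cdot d_{F_*} \leq d_g \leq \M \cdot d_{F_*}$ and the fact that $F_*$-length along a subarc is at most the full $F_*$-length. The only minor point to keep in mind is that $F_*$ may fail to be symmetric, so $d_{F_*}$ is only a quasi-metric; but the argument above uses only the one-sided inequality $d_{F_*}(x_1, \hat\gamma(t)) \leq \text{length}_{F_*}(\hat\gamma|_{[0,t]})$, which follows directly from the definition of $d_{F_*}$ as an infimum over paths starting at $x_1$.
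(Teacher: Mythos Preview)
Your proof is correct and follows essentially the same approach as the paper: both arguments bound the $F_*$-length of the subarc $\hat\gamma|_{[0,t]}$ by the full $F_*$-length $\leq 2d_{F_*}(x_1,x_2)\leq \tfrac{2}{\m}d_g(x_1,x_2)$ and convert to $d_g$ via the comparison \eqref{eqn: compare g, Fstar}. The only cosmetic difference is that the paper applies the pointwise bound $|\dot{\hat\gamma}|_g\leq \M F_*(\hat\gamma,\dot{\hat\gamma})$ inside the arclength integral, whereas you pass through $d_{F_*}(x_1,\hat\gamma(t))$ and then use $d_g\leq \M d_{F_*}$; these are equivalent.
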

\begin{proof} Let $\hat{\gamma}:[0,1]\to M$  be a curve as in the statement of the lemma..
 Recalling that  $d_{F_*}(x_1, x_2) \leq \m^{-1} d_g(x_1,x_2)$ by \eqref{eqn: compare g, Fstar}, we see that for any $t \in [0,1]$, 
 $$d_g(x_1, \hat{\gamma}(t)) \leq \int_0^t |\dot{\hat{\gamma}}(t)|_g \,dt \leq \M \int_0^t F_*(\hat{\gamma}(t),\dot{\hat{\gamma}}(t) )\,dt 
\leq \M \int_0^1 F_*(\hat{\gamma}(t),\dot{\hat{\gamma}}(t) )\,dt \leq \frac{2\M}{\m} d_g(x_1, x_2).
 $$
\end{proof}

Lemmas~\ref{lem: app Fstar reg} and \ref{lem: appendix geodesic in ball} will be used to prove the following lemma.
\begin{lemma}\label{last lemma}
	There exist $\rho_1= \rho_1 (g, F)>0$ and $C= C(g, F)>0$ such that for all $x_0 \in M$, $\rho<\rho_1$, and $z_1, z_2 \in B_{g_{x_0}}(0, \rho) \subset  T_{x_0} M$, we have 
	\begin{align} \label{eqn: last lemma a}
		(1-C\rho ) F_*(x_0,z_2-z_1) \leq d_{F_*}\big(\exp_{x_0}(z_1) , \exp_{x_0} (z_2)\big) \leq (1+C\rho) F_*(x_0,z_2-z_1).
	\end{align}
	In particular, up to further decreasing $\rho_1$ depending on the same parameters,
	\begin{equation}
	\label{eqn: improved 1}\begin{split}
	\Big|d_{F_*}\big(\exp_{x_0}(z_1), &\exp_{x_0} (z_2)\big)-F_*\big(x_0,z_2-z_1\big)\Big| \\
	&\leq C \rho \min\Big\{  F_*\Big(x_0,z_2-z_1\Big),  d_{F_*}\Big(\exp_{x_0}(z_1), \exp_{x_0} (z_2)\Big)	\Big\}
	\end{split}
		\end{equation}
	\end{lemma}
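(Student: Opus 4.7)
The plan is to prove the two-sided bound \eqref{eqn: last lemma a} by separately establishing the upper and lower estimates, using the chart $\psi = \exp_{x_0}$ to transport the relevant curves to $T_{x_0}M$. Both estimates will rest on a one-homogeneous upgrade of Lemma~\ref{lem: app Fstar reg}: for $y \in B_{g_{x_0}}(0, C\rho)$ and any $z \in T_{x_0}M$,
\[
|\psi^*F_*(y,z) - \psi^*F_*(0,z)| \leq C\rho\, F_*(x_0, z),
\]
which follows from the statement of Lemma~\ref{lem: app Fstar reg} together with the positive one-homogeneity of $F_*$ in the second variable and the norm equivalence $|z|_{g_{x_0}} \leq \M\, F_*(x_0, z)$ from \eqref{eqn: compare g, Fstar}.

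For the upper bound, I would use the straight segment $\gamma_0(t) = (1-t)z_1 + tz_2$ in $T_{x_0}M$ and its image $\gamma = \psi \circ \gamma_0$ as a competitor. Since $\dot\gamma_0 \equiv z_2 - z_1$ and $\gamma_0$ stays in $B_{g_{x_0}}(0,\rho)$, the definition of the pullback gives $\int_0^1 F_*(\gamma, \dot\gamma)\,dt = \int_0^1 \psi^*F_*(\gamma_0(t), z_2 - z_1)\,dt$, which is bounded by $(1 + C\rho)\psi^*F_*(0, z_2-z_1) = (1+C\rho)F_*(x_0, z_2-z_1)$ by the above estimate and $d\psi_0 = \mathrm{id}$.

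For the lower bound, I would fix $\epsilon \in (0,1)$ and select an almost-minimizing curve $\hat\gamma:[0,1] \to M$ joining $\exp_{x_0}(z_1)$ to $\exp_{x_0}(z_2)$ with $\int_0^1 F_*(\hat\gamma, \dot{\hat\gamma})\,dt \leq (1+\epsilon) d_{F_*}\big(\exp_{x_0}(z_1), \exp_{x_0}(z_2)\big) \leq 2 d_{F_*}$. The main obstacle is to guarantee that $\hat\gamma$ remains inside a ball around $x_0$ that is small enough to apply the upgraded Lemma~\ref{lem: app Fstar reg} and to invert $\psi$; this is precisely where Lemma~\ref{lem: appendix geodesic in ball} enters, yielding $d_g(x_0, \hat\gamma(t)) \leq (1 + 4\M/\m)\rho$ via the triangle inequality and $d_g(x_0, \exp_{x_0}(z_i)) \leq |z_i|_{g_{x_0}} \leq \rho$. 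Choosing $\rho_1$ small enough that $(1+4\M/\m)\rho_1 < \min\{\mathrm{inj}_g M,\, \rho_2\}$ allows us to set $\gamma_0 = \psi^{-1}\circ\hat\gamma$, and then the upgraded Lemma~\ref{lem: app Fstar reg} together with Jensen's inequality applied to the convex one-homogeneous function $F_*(x_0,\cdot)$ give
\[
\int_0^1 F_*(\hat\gamma, \dot{\hat\gamma})\,dt \geq (1-C\rho)\int_0^1 F_*(x_0, \dot\gamma_0(t))\,dt \geq (1-C\rho)\, F_*\!\left(x_0, \int_0^1 \dot\gamma_0\,dt\right) = (1-C\rho) F_*(x_0, z_2-z_1).
\]
Combining with the choice of $\hat\gamma$ and sending $\epsilon \to 0$ establishes the left inequality of \eqref{eqn: last lemma a}.

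Finally, \eqref{eqn: improved 1} follows from \eqref{eqn: last lemma a} by elementary algebra: the inequality $(1-C\rho)F_*(x_0,z_2-z_1) \leq d_{F_*}(\cdots) \leq (1+C\rho)F_*(x_0,z_2-z_1)$ directly gives $|d_{F_*} - F_*(x_0, z_2-z_1)| \leq C\rho\, F_*(x_0,z_2-z_1)$, while inverting it (using $(1 - C\rho)^{-1} \leq 1 + 2C\rho$ and $(1 + C\rho)^{-1} \geq 1 - C\rho$ for $\rho$ small) produces the analogous bound $|d_{F_*} - F_*(x_0, z_2-z_1)| \leq C\rho\, d_{F_*}(\exp_{x_0}(z_1), \exp_{x_0}(z_2))$. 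Taking the minimum of the two right-hand sides yields \eqref{eqn: improved 1} after relabeling the constant $C$.
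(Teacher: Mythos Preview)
Your proposal is correct and follows essentially the same route as the paper: both prove the upper bound via the straight segment in $T_{x_0}M$ as a competitor together with Lemma~\ref{lem: app Fstar reg}, and the lower bound by pulling back an almost-minimizing curve (confined via Lemma~\ref{lem: appendix geodesic in ball}) and using the convexity/one-homogeneity of $F_*(x_0,\cdot)$---you phrase this as Jensen's inequality, while the paper states the equivalent variational identity \eqref{eqn: app f star inf curve}. The deduction of \eqref{eqn: improved 1} from \eqref{eqn: last lemma a} is likewise the same elementary algebra.
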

	\begin{proof}
	 Let $\rho_2= \rho_2(g, F)$ be chosen according to Lemma~\ref{lem: app Fstar reg}.
	  Let $\rho_1 $ be a fixed constant to be specified later in the proof, small  enough such that $\rho_1\leq (1+\frac{\m}{4\M })\rho_2$. Let $\rho <\rho_1$ and fix $z_1, z_2 \in B_{g_{x_0}}(0,\rho) \subset T_xM$. We prove the second inequality in \eqref{eqn: last lemma a} first. With the usual shorthand $\psi = \exp_{x_0}$, we have 
		\begin{align*}
			d_{F_*}(\psi(z_1),\psi(z_2) ) \leq \inf \left\{ \int_0^1 \psi^*F_*\left(\gamma(t) , \dot\gamma(t)\right)\,dx\  :\ \gamma:[0,1] \to B_{g_{x_0}}(0,\rho), \ \gamma(0)=z_1, \ \gamma(1)=z_2 \right\}.
		\end{align*}
		We plug in $\gamma(t) = tz_2+ (1-t)z_1$ as a test curve. By convexity, $\gamma(t) \in B_{g_{x_0}}(0, \rho)$ for all $t \in [0,1]$, and since $\rho<\rho_2$, we can apply  Lemma~\ref{lem: app Fstar reg} to find
		\begin{align*}
			d_{F_*}\big(\psi(&z_1), \psi(z_2)\big)
			 \leq \int_0^1 \psi^*F_*\big(tz_2 + (1-t)z_1 ,\, z_2-z_1\big) \,dt\\
			 & \leq (1+ C\rho)\int_0^1  \psi^*F_*\big(0,\, z_2-z_1\big) \,dt  = (1+ C\rho)\, \psi^*F_*\big(0,z_2-z_1\big) = (1+ C \rho)\,F_*\big(x_0,z_2-z_1\big)\,.
		\end{align*}
		Here $C = C(\textsf{C}, \m)= C(g,F)$ is the constant from \eqref{eqn: reg dual norm}.		%

		Now we prove the first inequality in \eqref{eqn: last lemma a}; the proof is similar but slightly more involved. 
		Let $\hat{\gamma}:[0,1]\to M$ be a curve with $\hat{\gamma}(0)=\psi(z_1)$ and $\hat{\gamma}(1) = \psi(z_2)$ such that 
		\[
		\int_0^1 F_*\big(\hat{\gamma}(t), \dot{\hat{\gamma}}(t)\big)\,dt \leq (1+ \rho) d_{F_*}\big(\psi(z_1), \psi(z_2)\big).
		\]
		Since, by the triangle inequality, we have $d_g(\psi(z_1), \psi(z_2))\leq 2\rho$,  Lemma~\ref{lem: appendix geodesic in ball} guarantees that the image of $\hat{\gamma}$ is contained in $B_g(\psi(z_1),\frac{4\M}{\m}\rho)$, which in turn is contained in $B_{g}(x_0,(1 + \frac{4\M}{\m})\rho)$. So, $(1 + \frac{4\M}{\m})\rho<\rho_2 < \text{inj}_{g}M/2$ we may consider the pulled-back curve $\gamma = \psi^{-1} \hat{\gamma}: [0,1] \to B_{g_{x_0}}(0,(1 + \frac{4\M}{\m})\rho) \subset T_{x_0}M$, which has $\gamma(0)= z_1,$ and $ \gamma(1) = z_2$.  It is easy to check  using duality
		  %
		  %
	that for any $z_1,z_2 \in T_{x_0}M$, 
		\begin{equation}
			\label{eqn: app f star inf curve}
		F_*(x_0,z_2-z_1) = \inf \Big\{ \psi^*F_*(0, \dot{\gamma}(t))\, dt  \ :  \ \gamma:[0,1]\to T_xM, \ \gamma(0)=z_1,\ \gamma(1)=z_2 \Big\}.
		\end{equation}
Using $\gamma$ as a competitor in \eqref{eqn: app f star inf curve} and applying the bound \eqref{eqn: reg dual norm} and  $\dot{\gamma}(t) = d\psi_{\hat\gamma(t)}(\dot{\hat{\gamma}}(t))$, we find
		\begin{align*}
			F_*(x_0,z_2-z_1) \leq \int_0^1 \psi^*F_*(0, \dot{\gamma}(t)) \,dt & \leq (1+C\rho)\int_0^1  \psi^* F_*(\gamma(t), \dot\gamma(t)) \,dt \\
 & =(1+C\rho)\int_0^1 F_*(\hat{\gamma}(t), \dot{\hat{\gamma}}(t)) \,dt   = (1+C\rho) d_{F_*}(\psi(z_1), \psi(z_2)).
		\end{align*}
		Here $C = C(\textsf{C}, \m ,\M)= C(F,g)$. This proves \eqref{eqn: last lemma a}; \eqref{eqn: improved 1} follows immediately from \eqref{eqn: last lemma a} up to decreasing $\rho_1$ depending on $C $ (and thus on $ F,g$)  and doubling the constant $C$.
	\end{proof}
We are now ready to prove Lemma~\ref{lem: appendix hausdorff}.
\begin{proof}[Proof of Lemma~\ref{lem: appendix hausdorff}]
	 Let $\rho_1=\rho_1(g, F)$ be chosen according to Lemma~\ref{last lemma}.
Let $\rho_0 >0$ be a fixed constant to be specified in the proof, small enough so that 
$(1+ \m^{-1})\rho_0\leq  \rho_1$. 
Fix $r \in (0,\rho_0]$ and $\rho \in [0,\rho_0]$ and choose $x_0, x_1 \in M$ with $d_g(x_0, x_1)\leq r$. Let $\psi = \exp_{x_0} :T_{x_0}M \to M$. Provided we choose $\rho_0$ small enough in terms of $g, \m , \M$, we can pull back ${B}_{F_*}(x_1,2\rho_0)$ by $\psi$ and it suffices to show that 
	\begin{align}
\label{1}		
d_{H, g_{x_0}}\big(\partial r K_{x_0} +z_1,\, \psi^{-1} (\partial B_{F_*}(x_1,r))\big) \leq C \rho \,r  ,\\
		\label{2} d_{H, g_{x_0}}\big(r K_{x_0} +z_1,\, \psi^{-1} ( B_{F_*}(x_1,r))\big) \leq C \rho \, r\,.
	\end{align}
We prove \eqref{1}, with the proof of \eqref{2} being analogous. Toward \eqref{1}, fix $z_2 \in \partial r K_{x_0} + z_1 \subset T_{x_0}M$. So, $F_*(x_0, z_2-z_1) =r$. 
By assumption $|z_1|_{g_{x_0}}<\rho$, and thus using \eqref{eqn: compare g, Fstar}, we also have $|z_2|_{g_{x_0}}<\rho + \m^{-1}r \leq \rho_1.$ So, we can apply Lemma~\ref{last lemma}: recalling that  $F_*(x_0, z_2-z_1) =r$, \eqref{eqn: improved 1} guarantees that
\begin{equation}
	\label{app eqn: claim a}
|d_{F_*}(x_1, \psi(z_2))-r |  \leq C\rho\, r.
\end{equation} 
 Note that \eqref{app eqn: claim a} implies $z \in \psi^{-1}(B_{F_*}(x_1,\hat{r}))$ for $\hat{r}$ with $|\hat{r}-r| < C\rho \, r$, Together with \eqref{eqn: compare g, Fstar} this implies
\begin{equation}
	\label{eqn: 1 contain 1}
 z_2 \in \Nb{g_{x_0}}{\psi^{-1}(\partial B_{F_*}(x_1, r))}{\,{C\rho\, r}}\,
\end{equation}
where $C= C(g, F)$.

The other direction is analogous. Take $z_2 \in \psi^{-1}(\partial B_{F_*}(x_1,r))$ so that $d_{F_*}(x_1,\psi(z_2))=r.$ Again we have assumed that $|z_1|_{g_{x_0}}<\rho$ and using \eqref{eqn: compare g, Fstar} deduce that  $|z_2|_{g_{x_0}}< \rho + \m^{-1}r < \rho_1$ as well. Hence, we are in a position to apply Lemma~\ref{last lemma}, which guarantees that  
\[
|F_*(x_0, z_2-z_1) -r | \leq C \rho \,r\,.
\]
So, we see that $z_2 \in \partial K \hat{r} +z_1$ for some $\hat{r}$ with $|\hat{r}-r| <C \rho\,r.$ Together with \eqref{eqn: compare g, Fstar}, this proves that 
\[
z_2 \in \Nb{g_{x_0}}{\partial r K_{x_0} + z_1}{\,{C \rho\,r}}\,
\] with $C= C(g, F)$ and completes the proof. 
\end{proof}

\section{The  diameter bound}\label{sec:diam}
In this section we prove a uniform scale-invariant  diameter bound
 for  volume-constrained $\eo$-local minimizers $\W_v$ of $\F$ with sufficiently small volume $v$: $\W_v$  is contained in the union of $J_0$ balls of radius $2 v^{\sfrac{1}{n}}.$ This estimate
 is uniform in $v$ in the sense that,  with respect to the rescaled metric $h=v^{\sfrac{-2}{n}}g$, $|\W_v|_h =1$ and $\W_v$ is contained in $J_0$ balls of radius $2$.
\begin{theorem}\label{thm: diameter bound}
Let $(M,g)$ be a closed Riemannian manifold of dimension $n\geq2$ and let $\F$ be an anisotropic surface energy with integrand $F. $ 
Fix $\eo>0$ and $\kappa>0$. There exist $v_0=v_0(n,g,F,\eo,\kappa) \in (0,|M|_g)$ and $\JO=\JO(n,g, F,\eo,\kappa)\in \mathbb N$ such that for any volume-constrained $\eo$-local minimizer  $\W_v$ of $\F$ with volume $v < v_0$ and $\F(\W_v) \leq \kappa v^{\sfrac{(n-1)}{n}}$, there is a collection of points $x_1, \dots ,x_{\JO} \in M$ such that  
	$$\W_v\subset \bigcup_{i=1}^{\JO} B_g\big(x_i, 2v^{\sfrac{1}{n}}\big).$$
	\end{theorem}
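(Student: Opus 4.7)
The plan is to execute the three-ingredient strategy outlined in the introduction: first localize the anisotropic perimeter of $\W_v$ on balls of radius $v^{\sfrac{1}{n}}$ to produce a sum whose concavity can be exploited, then apply Lemma \ref{lem: J balls} to concentrate most of the mass of $\W_v$ into $\JO$ of these balls, and finally close via a coarea-type differential inequality against $\eo$-local minimality to sweep the remaining tail inside the balls enlarged to radius $2v^{\sfrac{1}{n}}$.

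For the localization, I would fix a cover $\{B_g(x_i, v^{\sfrac{1}{n}})\}_{i\in I}$ of $M$ with overlap bounded by $\Lambda(n,g)$; in normal coordinates at $x_i$, the metric and the integrand deviate from their values at $x_i$ by $O(v^{\sfrac{1}{n}})$ via \eqref{eqn: F C1 reg}. A Taylor expansion in the spirit of the proof of Lemma \ref{lemma: dilation}, combined with the Euclidean Wulff inequality \eqref{Wulffineq} applied to the pulled-back set $\psi_i^{-1}(\W_v\cap B_g(x_i, v^{\sfrac{1}{n}}))$, gives
\[
\F\big(\W_v;B_g(x_i, v^{\sfrac{1}{n}})\big) + \int_{\W_v \cap \pa B_g(x_i, v^{\sfrac{1}{n}})} F\,d\Hi^{n-1}_g \ \geq\ c_1\, \big|\W_v \cap B_g(x_i, v^{\sfrac{1}{n}})\big|_g^{\sfrac{(n-1)}{n}}
\]
with $c_1 = c_1(n,g,F)>0$. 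Summing over $i$ using the bounded overlap, handling the boundary integrals by averaging the radius in $[v^{\sfrac{1}{n}}/2, v^{\sfrac{1}{n}}]$ (so that by coarea their sum is $O(v^{\sfrac{(n-1)}{n}})$), and invoking $\F(\W_v)\leq \kappa v^{\sfrac{(n-1)}{n}}$ then yields
\[
\sum_{i\in I}\Big(\big|\W_v \cap B_g(x_i, v^{\sfrac{1}{n}})\big|_g\big/v\Big)^{\sfrac{(n-1)}{n}} \leq L(n,g,F,\kappa).
\]
Lemma \ref{lem: J balls} applied to $a_i := |\W_v \cap B_g(x_i, v^{\sfrac{1}{n}})|_g/v$ then produces, for any threshold $\delta>0$, an integer $\JO = \JO(L,\delta)$ and indices $i_1,\ldots,i_{\JO}$ such that with $Y := \{x_{i_1},\ldots,x_{i_{\JO}}\}$,
\[
\big|\W_v \setminus \Nb{g}{Y}{v^{\sfrac{1}{n}}}\big|_g \leq \delta\, v.
\]
I will fix $\delta$ so small that $\delta < c_n\eo^n$ (where $c_n$ is the Euclidean unit-ball volume) and small enough for the ODE below to force vanishing by $r = 2v^{\sfrac{1}{n}}$; this determines $\JO$ in terms of $n, g, F, \eo, \kappa$.

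Set $f(r) := |\W_v \setminus \Nb{g}{Y}{r}|_g$ and consider the candidate competitor $E_r := \W_v \cap \Nb{g}{Y}{r}$, whose volume is matched to $|\W_v|_g$ via Lemma \ref{lemma: dilation} at a multiplicative cost of $1+o(1)$ in $\F$. For $\eo$-admissibility on $r\in[(1+\eo)v^{\sfrac{1}{n}},\, 2v^{\sfrac{1}{n}}]$, observe that if some $p \in \W_v\setminus\Nb{g}{Y}{r}$ satisfied $d_g(p,\pa\W_v)\geq \eo v^{\sfrac{1}{n}}$, the entire ball $B_g(p, \eo v^{\sfrac{1}{n}})$ would be contained in $\W_v \setminus \Nb{g}{Y}{v^{\sfrac{1}{n}}}$, contradicting $\delta < c_n\eo^n$. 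Combining local minimality, the decomposition $\F(\W_v) = \F(E_r) + \F(\W_v\setminus\Nb{g}{Y}{r}) - 2\int_{\W_v\cap\pa\Nb{g}{Y}{r}} F\,d\Hi^{n-1}_g$, the Wulff inequality \eqref{eqn: Wulff small volume} applied to $\W_v\setminus\Nb{g}{Y}{r}$ (whose volume $f(r)\leq \delta v$ remains below $\bar v$), and the coarea bound $\int_{\W_v\cap\pa\Nb{g}{Y}{r}} F\,d\Hi^{n-1}_g \leq -\M f'(r)$ produces the differential inequality
\[
c\, f(r)^{\sfrac{(n-1)}{n}} \leq -C\, f'(r) \qquad \text{on } [(1+\eo)v^{\sfrac{1}{n}},\, 2v^{\sfrac{1}{n}}].
\]
Together with $f((1+\eo)v^{\sfrac{1}{n}})\leq \delta v$, integration gives $f\equiv 0$ by $r = 2v^{\sfrac{1}{n}}$, yielding the claimed $\W_v \subset \bigcup_{j=1}^{\JO} B_g(x_{i_j}, 2v^{\sfrac{1}{n}})$.

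The main obstacle is the $\eo$-admissibility of the competitor, since the tubular-neighborhood constraint in $\eo$-local minimality is a nonlocal property to which Lemma \ref{lem: J balls} is blind. My fix is to sacrifice a thin annulus of width $\eo v^{\sfrac{1}{n}}$ at the start of the ODE range and exploit the quantitative smallness of the discarded mass $\delta v$ to exclude points deep inside $\W_v$; this is precisely what forces $\JO$ and $v_0$ to depend quantitatively on $\eo$. A secondary technical nuisance, the volume matching of $E_r$ to $\W_v$, is absorbed cleanly by Lemma \ref{lemma: dilation}.
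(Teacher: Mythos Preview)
Your proposal follows the same three-step architecture as the paper's proof: localize to obtain the concavity sum, apply Lemma~\ref{lem: J balls} to concentrate the mass into $\JO$ balls, and then run a coarea differential inequality against $\eo$-local minimality with volume matched via Lemma~\ref{lemma: dilation}. The only substantive difference is in the localization. The paper proves a dedicated cube-covering lemma (Lemma~\ref{cubecovering}, adapted to $M$ from \cite{Figalli-Maggi-log}) producing \emph{pairwise disjoint} sets $\{Q_i\}$ with $\text{diam}_g Q_i \leq v^{\sfrac{1}{n}}$ and the bound $\sum_i \Hi^{n-1}_g(\Omega_v^{(1)}\cap\partial Q_i)\leq C v^{\sfrac{(n-1)}{n}}$, so that the $a_i=|\Omega_v\cap Q_i|_g/v$ genuinely sum to $1$ and Lemma~\ref{lem: J balls} applies verbatim. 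Your overlapping-ball-plus-radius-averaging route is lighter and works, but be aware of three small fixes: (i) your $a_i$ sum to the overlap constant $\Lambda$, not $1$, so Lemma~\ref{lem: J balls} must be invoked in the trivially adapted form $a_J\leq \Lambda/J$; (ii) the identity $\F(\Omega_v)=\F(E_r)+\F(\Omega_v\setminus\Nb{g}{Y}{r})-2\int F$ presumes $F(x,\nu)=F(x,-\nu)$, which is not assumed---write the two boundary integrals with opposite normals separately and bound each by $\M|f'(r)|$; (iii) your ODE interval $[(1+\eo)v^{\sfrac{1}{n}},2v^{\sfrac{1}{n}}]$ is empty for $\eo\geq 1$, but this is harmless since an $\eo$-local minimizer is also an $\eo'$-local minimizer for every $\eo'<\eo$, so one may assume $\eo\leq \tfrac12$ from the start (the paper's interval $[v^{\sfrac{1}{n}},3v^{\sfrac{1}{n}}]$ together with Lemma~\ref{lem: containment} at $\delta=\tfrac{\eo}{2}v^{\sfrac{1}{n}}$ implicitly uses the same reduction).
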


The proof of Theorem~\ref{thm: diameter bound} has three steps. First, Lemma~\ref{lem: J balls} uses the concavity of the function $t \mapsto t^{\sfrac{(n-1)}n}$ to imply that if a set of finite perimeter is the union of $N$ disjoint sets, then a significant portion of its volume is contained in the union of $\JO$ of those sets. Next, combining Lemma~\ref{lem: J balls} with a grid argument inspired by \cite{Figalli-Maggi-log, MaggiMihaila}, we show in Proposition \ref{prop: measure estimate} that any set of finite perimeter with volume $v$ and a uniform perimeter (or $\F)$ bound has most of its volume contained in the union of $\JO$ balls of radius $v^{\sfrac{1}{n}}$. Finally,  we prove a differential inequality that allows us to improve this measure bound to containment in $\JO$ balls in the case of a  volume-constrained $\eo$-local minimizer.

\subsection{A lemma about concavity and sequences of real numbers}\label{ssec: concavity}
The following lemma says that if a nonnegative decreasing sequence $\{a_i\}$ sums to $1$ and has $\|\{a_i\}\|_{\ell^\alpha}$ bounded for a concave power $\alpha \in (0,1)$, then the tail end of the sequence has small $\ell^1$ norm. This lemma has been already used in concentration compactness arguments, see for instance \cite[Proposition 3.7]{Gol22}, \cite[Proposition 3.1]{candautilh2021existence}, \cite[Lemma 5.6, Lemma 6.6]{CGOS}, \cite[Theorem 3.3]{novaga2022isoperimetric}. We will apply it with $\alpha = \frac{n-1}{n}$ as described above.
\begin{lemma}\label{lem: J balls}
	Fix $\alpha \in (0,1),$ $\kappa >0$, and $\ETA>0$. There exists $\JO = \JO (\alpha,\kappa , \ETA) \in \mathbb{N}$ such that for any sequence $\{a_i\}_{i\in \mathbb{N}}$ of nonnegative real numbers with $a_1 \geq a_2 \geq \dots $ and such that
	\begin{align*}
		\sum_{i\in \mathbb{N}} a_i = 1
		 \qquad \text{ and } \qquad 
		\sum_{i \in \mathbb{N}} a_i^{\alpha} \leq \kappa,
	\end{align*}
	we have $\sum_{i=1}^{\JO} a_i \geq 1-\ETA. $
\end{lemma}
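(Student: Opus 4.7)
The strategy is to exploit the strict concavity of $t \mapsto t^\alpha$ (i.e. the fact that $\alpha < 1$): small values of $a_i$ are expensive in the $\ell^\alpha$ norm relative to their contribution to the $\ell^1$ norm. More precisely, I would use the monotonicity of the sequence to force $a_{J_0+1}$ to be small, and then bootstrap this into a lower bound on the tail $\ell^\alpha$-norm that must dominate $\kappa$.

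\textbf{Key steps.} First, since $a_1 \geq a_2 \geq \dots \geq 0$ and $\sum_{i} a_i = 1$, the monotonicity immediately yields
\begin{equation*}
a_i \leq \frac{1}{i} \qquad \text{for every } i \in \mathbb N,
\end{equation*}
so in particular $a_i \leq (J_0+1)^{-1}$ for every $i \geq J_0 + 1$. Second, since $\alpha - 1 < 0$, this gives the pointwise estimate
\begin{equation*}
a_i^\alpha = a_i \cdot a_i^{\alpha - 1} \geq a_i \cdot (J_0+1)^{1-\alpha} \qquad \text{for every } i \geq J_0 + 1.
\end{equation*}
Third, summing this inequality for $i \geq J_0 + 1$ and using the hypothesis $\sum_i a_i^\alpha \leq \kappa$ yields
\begin{equation*}
(J_0 + 1)^{1 - \alpha} \sum_{i = J_0+1}^\infty a_i \;\leq\; \sum_{i = J_0+1}^\infty a_i^\alpha \;\leq\; \kappa.
\end{equation*}
Finally, I would argue by contradiction: if $\sum_{i = 1}^{J_0} a_i < 1 - \eta$, then $\sum_{i = J_0+1}^\infty a_i > \eta$, and the previous display gives $(J_0 + 1)^{1-\alpha} \eta < \kappa$. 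Choosing $J_0 := \lceil (\kappa/\eta)^{1/(1-\alpha)} \rceil$ (or any integer strictly larger than $(\kappa/\eta)^{1/(1-\alpha)} - 1$) rules this out and completes the proof.

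\textbf{Expected obstacle.} There is essentially no obstacle: the whole argument is two lines of elementary real analysis once one spots the pointwise inequality $a_i^\alpha \geq (J_0+1)^{1-\alpha} a_i$ on the tail. The only mildly subtle point is remembering to use the monotonicity of $\{a_i\}$ to convert the $\ell^1$ normalization into a pointwise upper bound $a_{J_0+1} \leq 1/(J_0+1)$; without monotonicity one cannot conclude anything, since the mass could sit on a single very small index far out in the sequence. The quantitative choice of $J_0$ blows up like $(\kappa/\eta)^{1/(1-\alpha)}$ as $\alpha \to 1^-$, which is consistent with the fact that concavity degenerates in that limit.
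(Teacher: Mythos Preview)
Your proof is correct and essentially identical to the paper's: both use monotonicity to get $a_J \leq 1/J$, then write $a_i = a_i^{\alpha}\,a_i^{1-\alpha}$ on the tail to bound $\sum_{i>J_0} a_i \leq (J_0+1)^{-(1-\alpha)}\kappa$, and choose $J_0$ so this is at most $\eta$. The only cosmetic difference is that the paper bounds $a_i^{1-\alpha}$ by $a_J^{1-\alpha}$ directly rather than passing through the pointwise inequality you wrote, but the content is the same.
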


\begin{proof}
Since $\sum_{i=1}^J a_i \leq 1$ and the sequence is decreasing, we observe that $a_J \leq 1/J$ for every $J\in \mathbb N$. Hence we compute
$$1- \sum_{i=1}^{J} a_i =\sum_{i> J} a_i=\sum_{i > J} a_i^{\alpha} a_i^{1-\alpha}\leq a_J^{1-\alpha}\sum_{i > J} a_i^{\alpha} \leq \frac 1{J^{1-\alpha}}\kappa.$$
The proof follows choosing $J$ large enough so that $\frac 1{J^{1-\alpha}}\kappa \leq \eta$.
\end{proof}

\subsection{A  diameter bound in measure}\label{ssec: measure bound}
In this section, we prove that a set of finite perimeter in $(M,g)$ with small enough volume $v$ has all but an $\eta$-fraction contained in $\JO$ balls of radius $v^{1/n}$, where $\JO$ depends only on dimension, the scale invariant anisotropic perimeter bound and $\eta$. The proposition applies to all sets of finite perimeter of sufficiently small volume and does not require  minimality. %
\begin{proposition}
\label{prop: measure estimate}
	Fix $\kappa \geq 1$ and $\ETA>0$. Let $(M,g)$ be a closed Riemannian manifold of dimension $n \geq 2$ and fix an anisotropic surface energy $\F$ with integrand $F.$
There exist $v_0=v_0(n, g,F,\ETA)>0$ and $\JO = \JO (n , F,\kappa , \ETA) \in \mathbb{N}$ such that the following holds. For any finite perimeter set $\W$ with volume $v \in (0,v_0]$ and $\F(\W)\leq \kappa v^{(n-1)/n}$, we may find points $x_1, \dots x_{\JO}$ in $M$ such that 
\begin{equation}\label{eqn: measure est}
	\Big|\W \setminus \bigcup_{i=1}^{\JO} B_g\big(x_i, v^{\sfrac{1}{n}}\big)\Big|_g \leq \ETA v\,.
\end{equation}
\end{proposition}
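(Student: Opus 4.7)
The plan is to apply Lemma~\ref{lem: J balls} with exponent $\alpha = (n-1)/n$ to the volumes of pieces obtained by partitioning $\W$ via a Euclidean grid at scale $v^{1/n}$ in local charts. Specifically, after establishing $\sum_i |E_i|_g^{(n-1)/n} \leq \tilde\kappa\, v^{(n-1)/n}$ for some $\tilde\kappa = \tilde\kappa(n,g,F,\kappa)$, Lemma~\ref{lem: J balls} applied to $a_i := |E_i|_g/v$ (rearranged decreasingly) will give a count $J_0$ of dominant cells that capture all but $\eta v$ of the volume.

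First I would fix a finite cover $\{U_k\}_{k=1}^N$ of $M$ by small geodesic balls lying inside normal coordinate charts $\psi_k$ centered at points $y_k$, and refine it to a measurable partition $\{A_k\}$ of $M$. Choosing $v_0 = v_0(n,g,F)$ small enough, the pulled back metric $\psi_k^* g$ and integrand $\psi_k^* F$ are, at scale $\ell := c_n v^{1/n}$, within a factor $\tfrac32$ of $g_{euc}$ and of the constant integrand $F(y_k,\cdot)$ respectively; here $c_n$ is a dimensional constant chosen so that every Euclidean cube of side $\ell$ pushes forward into a geodesic ball of radius $v^{1/n}$. In each chart I would place a Euclidean $\ell$-grid translated by $\tau_k \in [0,\ell]^n$; a standard coarea/Fubini identity
\[
\int_{[0,\ell]^n} \Hi^{n-1}_{g_{euc}}\big(\mathrm{grid}(\tau_k) \cap \psi_k^{-1}(\W)\big)\, d\tau_k = n\,\ell^{n-1}\,|\psi_k^{-1}(\W)|_{g_{euc}}
\]
then produces a choice of $\tau_k$ for which the grid cuts $\W$ in $\Hi^{n-1}$-measure at most $C(n)\,v^{(n-1)/n}$. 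An analogous Fubini argument over a small rigid translation of the partition $\{A_k\}$ ensures the same for the interfaces $\partial A_k$.

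Enumerating the resulting cells $\{E_i\}_i$, each contained in a Euclidean cube of side $\ell$ and hence in a geodesic ball of radius $v^{1/n}$, the Wulff inequality \eqref{Wulffineq} applied in the chart containing $E_i$ gives $c(n,F)\,|E_i|_g^{(n-1)/n} \leq \bar\F_{y_k}(E_i)$, where $\bar\F_{y_k}(E_i)$ denotes the full Euclidean anisotropic perimeter with the constant integrand $F(y_k,\cdot)$. Summing over $i$ and splitting the right hand side into the contribution from $\partial^*\W$ inside each cube (bounded by $\tfrac32\,\F(\W) \leq \tfrac{3\kappa}{2} v^{(n-1)/n}$ via the distortion estimate) and the contribution from cube faces and chart interfaces (bounded by $\M$ times the two Fubini estimates) yields
\[
c(n,F)\sum_i |E_i|_g^{(n-1)/n} \leq C(n,g,F)(\kappa+1)\, v^{(n-1)/n}.
\]
Dividing by $v$, setting $a_i := |E_i|_g/v$ in decreasing order so that $\sum a_i = 1$ and $\sum a_i^{(n-1)/n} \leq \tilde\kappa$, and invoking Lemma~\ref{lem: J balls} produces $J_0$ with $\sum_{i=1}^{J_0} a_i \geq 1 - \eta$, which is precisely \eqref{eqn: measure est} after taking $x_i$ to be any point inside the $i$-th cube.

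I expect the main technical obstacle to be the grid/Fubini bookkeeping across multiple charts, in particular ensuring that the chart interfaces $\partial A_k$ contribute perimeter of order $v^{(n-1)/n}$ rather than $O(1)$; this requires the second layer of translation averaging and careful selection of the partition $\{A_k\}$. Once the partition is in place, the Wulff inequality application and the invocation of Lemma~\ref{lem: J balls} are routine.
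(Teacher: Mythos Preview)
Your proposal is correct and follows the same overall strategy as the paper: partition $\W$ into pieces of diameter at most $v^{1/n}$ via a grid-type construction chosen (by Fubini/coarea averaging) so that the added boundary is $O(v^{(n-1)/n})$, apply a Wulff-type inequality to each piece, and feed the resulting $\ell^{(n-1)/n}$ bound into Lemma~\ref{lem: J balls}.

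The only genuine difference is in how the cube covering is assembled across charts. The paper packages this into a separate lemma (Lemma~\ref{cubecovering}, adapted from \cite{Figalli-Maggi-log}): it covers $M$ by \emph{overlapping} geodesic balls, runs the grid/Fubini argument in each chart, and then performs an explicit refinement (subtracting earlier cubes from later ones) to make the collection disjoint; the added boundary in this refinement comes only from faces of cubes already controlled by Fubini, so no chart-interface term ever appears. You instead \emph{partition} $M$ first and then grid inside each piece, which forces you to bound $\Hi^{n-1}(\W^{(1)}\cap\partial A_k)$ separately. Your phrase ``rigid translation of the partition $\{A_k\}$'' is not quite the right mechanism on a manifold; what actually works is a coarea argument over the radii (or over level sets of a partition-of-unity function), yielding $\sum_k\Hi^{n-1}(\W^{(1)}\cap\partial A_k)\leq C(M)\,v\leq C(M)\,v^{(n-1)/n}$ for $v\leq 1$. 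With that fix your route is fine. Note also that the paper applies the small-volume isoperimetric inequality \eqref{eqn: Wulff small volume} directly on $M$ rather than pulling back to the Euclidean Wulff inequality; either works, but the paper's choice avoids tracking the integrand distortion in charts.
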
 
The idea of the proof of Proposition~\ref{prop: measure estimate} is the following. First, we intersect $\W$ with a collection of disjoint open sets $\{Q_i\}$ of diameter at most $v^{\sfrac{1}{n}}$ that cover $\W$ up to a negligible set,  decomposing $\W$ into finitely many pairwise disjoint sets with the desired diameter bound. Importantly,  Lemma~\ref{cubecovering} below ensures  the collection $\{Q_i\}$ can be constructed in such a way that we quantitatively control the amount of surface energy that is added through taking intersections. Next, the Wulff inequality \eqref{eqn: Wulff small volume} yields a bound on the sum of a {\it concave power} of the volume of each component. Finally,  by Lemma~\ref{lem: J balls} we conclude that most of the volume of $\W$ must be contained in $\JO$ of the disjoint components.
 \begin{remark}
 	{\rm 
 	The smallness of $v_0$ in the statement of Proposition~\ref{prop: measure estimate} is used to apply the Wulff inequality in the form \eqref{eqn: Wulff small volume} on $(M,g)$. In Euclidean space with a translation invariant $\F$ (in particular the perimeter), the Wulff inequality \eqref{eqn: Wulff small volume} holds for every volume: hence Proposition~\ref{prop: measure estimate} holds, with the same proof, for sets of finite perimeter of any volume.
 	}
 \end{remark}
 
\begin{lemma}\label{cubecovering}
Let $(M,g)$ be a closed Riemannian manifold of dimension $n \geq 2$. There exist $c=c(n,g)>0$ and $r_0=r_0(n,g)>0$ such that the following holds. For every finite perimeter set $E\subset M$ and  $r\in (0,r_0)$, there is a finite collection of pairwise disjoint open sets $\{Q_i\}_{i=1}^N$ in $M$ with $\text{diam}_g(Q_i)\leq r$ such that $|E \setminus \cup_{i=1}^N Q_i |_g=0$, 
\begin{equation}\label{boundcover0}
\mathcal H^{n-1}_g(\{x\in \partial^*E\cap \partial^*Q_i \, : \, \nu_{E}(x)=\nu_{Q_i}(x)\})=0,
\end{equation} and 
\begin{equation}\label{boundcover}
\frac {|E|_g}r \geq c\, \sum_{i=1}^N \mathcal H^{n-1}_g \big(E^{(1)}\cap \partial Q_i\big).
\end{equation}
\end{lemma}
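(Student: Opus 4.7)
The plan is to mimic the classical Euclidean argument---a cubic grid of side length $s \sim r/\sqrt{n}$ together with Fubini/coarea averaging over translations---in normal coordinate charts, while simultaneously handling the boundaries between charts by also averaging over the radii of the geodesic balls defining the chart partition.

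First, by compactness of $M$, I would fix $R_0 = R_0(n,g) > 0$ much smaller than the injectivity radius and finitely many points $x_1, \dots, x_K \in M$ (with $K = K(n,g)$) such that $\{B_g(x_\alpha, R_0)\}_{\alpha=1}^K$ covers $M$ and each exponential map $\psi_\alpha = \exp_{x_\alpha} : B_{g_{euc}}(0, 3R_0) \to M$ is bi-Lipschitz with constants uniformly close to $1$. For parameters $\lambda = (\lambda_\alpha) \in [R_0, 2R_0]^K$ and $t = (t_\alpha) \in ([0,s)^n)^K$ with $s = r/(4\sqrt n)$, I introduce the partition $V_\alpha(\lambda) := B_g(x_\alpha, \lambda_\alpha) \setminus \bigcup_{\beta<\alpha}\overline{B_g(x_\beta, \lambda_\beta)}$ of $M$ (up to a $\mathcal H^n_g$-null set of geodesic spheres) and, in each chart, the Euclidean cubic grid $\{C_{\alpha, t_\alpha, k}\}_{k \in \mathbb{Z}^n}$ of side $s$ translated by $t_\alpha$, whose cubes pull back to open sets of $g$-diameter at most $r$. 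The candidate cells are $Q_{\alpha,k} := V_\alpha(\lambda) \cap \psi_\alpha(\mathrm{int}\, C_{\alpha, t_\alpha, k})$.

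The heart of the proof is to select $(\lambda, t)$ by two independent Fubini/coarea estimates. For the chart-partition boundaries, using $|\nabla d_g(x_\alpha, \cdot)|_g = 1$ a.e., the coarea formula yields $\int_{R_0}^{2R_0} \mathcal H^{n-1}_g(E^{(1)} \cap \partial B_g(x_\alpha, \lambda_\alpha))\, d\lambda_\alpha \leq |E|_g$, so one can pick $\lambda$ with $\sum_\alpha \mathcal H^{n-1}_g(E^{(1)} \cap \partial V_\alpha(\lambda)) \leq C|E|_g/R_0 \leq C|E|_g/r$ (using $r \leq R_0$). For the grid hypersurfaces, Fubini on translations in each chart gives $\int_{[0,s)^n} \sum_k \mathcal H^{n-1}_g(E^{(1)} \cap \psi_\alpha(\partial C_{\alpha, t_\alpha, k}))\, dt_\alpha \leq C s^{n-1} |E \cap B_g(x_\alpha, 3R_0)|_g$, so a good choice of $t_\alpha$ makes the grid-slice contribution at most $C|E \cap B_g(x_\alpha, 3R_0)|_g / s$; summing over $\alpha$ and using $s \sim r$ together with the bounded multiplicity of the cover yields the bound $C(n,g) |E|_g/r$. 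The null-set condition \eqref{boundcover0} comes for free: for a.e. $(\lambda, t)$ the $(n-1)$-rectifiable set $\partial^* E$ has vanishing $\mathcal H^{n-1}_g$-measure on each fixed hypersurface by a standard slicing argument, so in fact $\mathcal H^{n-1}_g(\partial^* E \cap \partial^* Q_{\alpha, k}) = 0$ and the condition is vacuous.

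The hard part---precisely what the doubled averaging is designed to defeat---is the chart-boundary contribution. If one instead used a fixed partition $\{V_\alpha\}$ coming from a fixed open cover, the sum $\sum_\alpha \mathcal H^{n-1}_g(E^{(1)} \cap \partial V_\alpha)$ could be as large as a constant depending only on $(n,g)$; this is fatal when $|E|_g$ is small, because such a constant cannot be absorbed into $|E|_g/r$ uniformly in $r \in (0, r_0)$. Averaging over the radii $\lambda_\alpha$ replaces this stubborn constant with an $|E|_g/R_0$ term, which is absorbable precisely because $r \leq R_0$. Given this, the remaining verifications---pairwise disjointness of the $Q_i$, the diameter bound, essential covering of $E$, and the inequality \eqref{boundcover}---follow routinely from the construction and the selected $(\lambda, t)$.
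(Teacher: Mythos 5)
Your proof is correct, and it takes a genuinely different route from the paper's. The paper applies the Euclidean covering lemma of Figalli--Maggi (\cite[Lemma 5.1]{Figalli-Maggi-log}) in each chart to obtain a collection of cubes that is \emph{not} pairwise disjoint across charts, and then performs an iterative, by-hand refinement (subtracting closures of cubes from earlier charts from cubes in later charts) to force disjointness, controlling the extra boundary that appears via a bounded-multiplicity argument for overlapping cubes. Your construction instead produces a genuine partition of $M$ from the start, by taking the differences of geodesic balls $V_\alpha(\lambda)=B_g(x_\alpha,\lambda_\alpha)\setminus\bigcup_{\beta<\alpha}\overline{B_g(x_\beta,\lambda_\beta)}$ and tiling each $V_\alpha$ by translated Euclidean cubes in normal coordinates; disjointness is then automatic. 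The key new idea you use is the \emph{double} Fubini/coarea averaging: over translations of the grid (as in the Euclidean argument, and implicitly in the cited lemma) \emph{and} over the radii $\lambda_\alpha\in[R_0,2R_0]$ of the chart balls. Your diagnosis of why this second averaging is needed is exactly right: a fixed cover would contribute an $\mathcal H^{n-1}$-term on the chart interfaces of size comparable to a constant $C(n,g)$ independent of $|E|_g$, which cannot be absorbed into $|E|_g/r$; averaging over radii replaces that constant with a term of size $|E|_g/R_0\le |E|_g/r$. The paper avoids this issue by a different device (keeping the cubes and paying for overlaps via the refinement), so the two proofs really are distinct. Your handling of \eqref{boundcover0} via the observation that only countably many fibers of a Lipschitz slicing can carry positive $\mathcal H^{n-1}$-measure of the rectifiable set $\partial^*E$ is also correct, and is in the same spirit as the paper's ``slight tilting'' remark. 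Two minor presentational points: you should say the cubes are \emph{pushed forward} by $\psi_\alpha$ (not pulled back), and when selecting $\lambda$ you should note the factor $K$ that appears because $\partial V_\alpha(\lambda)\subset\bigcup_{\beta\le\alpha}\partial B_g(x_\beta,\lambda_\beta)$, but since $K=K(n,g)$ this does not affect the claimed constant $c(n,g)$.
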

In \cite[Lemma 5.1]{Figalli-Maggi-log}, a statement analogous to Lemma~\ref{cubecovering} is shown in Euclidean space by decomposing $\R^n$ into cubes whose sides are parallel to a judiciously chosen orthonormal basis depending on the set $E$ itself. To prove Lemma~\ref{cubecovering}, we will obtain an initial collection of sets by applying \cite[Lemma 5.1]{Figalli-Maggi-log} in charts, and then refine the sets by hand to ensure they satisfy the properties of the lemma. Although \cite[Lemma 5.1]{Figalli-Maggi-log} is stated without using the set $E^{(1)}$ of points of density $1$ and without the property \eqref{boundcover0}, as observed in \cite[Proof of Lemma 3.1, Step four]{MaggiMihaila} we will need to use $E^{(1)}$ and \eqref{boundcover0} in the proof of Proposition~\ref{prop: measure estimate} in order to apply \cite[Theorem 16.3 (16.7)]{Mag} to obtain \eqref{usee}.
\begin{proof}[Proof of Lemma~\ref{cubecovering}]
{\it Step 1.} First, we construct an initial collection of open sets $\{Q_i\}_{i=1}^N$, possibly not pairwise disjoint,  with diameter at most $r$ that cover $E$ up to a negligible set and satisfy \eqref{boundcover}.

Choose $r_0 < (\text{inj}_g M)/4$ small enough so that $\frac{1}{2} g_{euc} \leq \psi^*g \leq 2 g_{euc}$ on $B_{g_{euc}}(0, 4r_0) \subset \R^n,$ where $\psi$ is the normal coordinate map centered at any $x\in M$.
 Choose a finite collection of points $x_1 ,
\dots , x_K$ such that the balls $\{B_g(x_k, 2r_0)\}_{k=1}^K$ cover $M$ and let $\psi_k : B_{g_{euc}}(0, 4r_0) \to M$ be the normal coordinate map centered at $x_k$.

Let $G_k = \psi_k^{-1}( E \cap B_g(x_k, 3r_0)) \subset  \R^n$, so that $G_k \Subset B_{g_{euc}}(0, 4r_0)$ and $E = \cup_{k=1}^K \psi_k(G_k)$.   Fix $k \in \{1,\dots, K\}$ and $r<r_0.$  
 Applying \cite[Lemma 5.1]{Figalli-Maggi-log}  to the set $G_k $, we obtain a collection of disjoint open cubes $\{Q'_k\}$ of diameter $r/2$ with parallel sides that cover Lebesgue almost all of $\R^n$ such that 
$	{|G_k|_{g_{euc}}} \geq \frac{r}{4n} \sum_{Q_k'} \mathcal{H}^{n-1}_{g_{euc}} (G_k^{(1)} \cap \pa Q_{k}').
$
Let $\{Q'_{k,a}\}_{a \in A'_k} \subset \{ Q'_k\}$ be the finite collection of those cubes that intersect $G_k$ nontrivially. Note that $Q'_{k,a} \subset B_{g_{euc}}(0, 4r_0)$ for each $a \in A_k'$, and 
\begin{equation}\label{figallimaggi}
	{|G_k|_{g_{euc}}} \geq \frac{r}{4n} \sum_{a \in A_k'} \mathcal{H}^{n-1}_{g_{euc}} \big(G_k^{(1)} \cap \pa Q_{k,a}'\big)\,.
\end{equation}

For each $a \in A_k'$, let $Q_{k,a}= \psi_k(Q'_{k,a})$. Notice that $\text{diam}_g(Q_{k,a}) \leq r$ for all $k\in\{1,\dots, K\}$ and $a \in A_k'$. As an initial refinement of this collection, we let $A_1= A_1'$ and for $k \geq 2$ let
$$A_k := \Big\{ a \in A_k'\ :\ Q_{k,a} \not \subset \bigcup_{j<k} \bigcup_{a \in A_j} Q_{j,a}\Big\}.$$
The collection $\{Q_{k,a}\}_{a \in A_k, 1\leq k \leq K}$ consists of open sets with $\text{diam}_g(Q_{k,a})\leq r$ and covers $E$ up to a set of measure zero. Moreover, applying \eqref{figallimaggi} in charts, we find that
\begin{equation*}\label{eqn: middle est}
	\begin{split}
{|E|_g} \geq \frac{1}{K} \sum_{k=1}^K |E\cap V_k|_g
 \geq \frac{1}{2K} \sum_{k=1}^K |G_k|_{g_{euc}}
&  \geq \frac{r}{8nK} \sum_{k=1}^K \sum_{a \in A_k} \mathcal{H}^{n-1}_{g_{euc}}\big(G_k^{(1)}\cap \partial Q_{a,k}'\big)\\
&  \geq \frac{r}{16nK} \sum_{k=1}^K \sum_{a \in A_k} \mathcal{H}_{g}^{n-1}\big(E^{(1)}\cap \partial Q_{a,k}\big),
\end{split}
\end{equation*}
 so the collection satisfies \eqref{boundcover}. However, the sets in this collection are not pairwise disjoint.

{\it Step 2:}   We now slightly modify the collection of sets from Step 1 above so that they are pairwise disjoint and are still open with diameter at most $r$, cover $E$ up to a negligible set, and satisfy the estimate \eqref{boundcover}. Fix $2 \leq k \leq K$ and $b \in A_k$. Let 
$I_{k,b}= \{ a \in A_1 : Q_{1,a} \cap Q_{k,b} \neq \emptyset \}$
be the indices corresponding to cubes from the chart $\psi_1$ that intersect $Q_{k,b}.$
By the construction from disjoint cubes in charts, the cardinality of $I_{k,b}$ is at most $C_n$.
Let 
$
\hat{Q}_{k,b} := Q_{k,b} \setminus \medcup_{a \in I_{k,b}}   \overline{Q}_{1,a}.
$
Then 
\begin{align*}
\mathcal{H}_g^{n-1}\big(\partial \hat{Q}_{k,b} \cap E^{(1)}\big) 
& \leq \mathcal{H}_g^{n-1} \big(\partial Q_{k,b} \cap E^{(1)}\big) + \sum_{a \in I_{k,b}} \mathcal{H}_g^{n-1} \big(\partial Q_{1,a} \cap {Q}_{k,b} \cap E^{(1)}\big)\,.
\end{align*}
Summing this up over all $ b \in A_k$ and $2 \leq k \leq K$, we 
find that 
\begin{align*}
	\sum_{k=2}^{K}\sum_{b \in A_k} \mathcal{H}^{n-1}_g\big(\partial \hat{Q}_{k,b} \cap E^{(1)}\big) \leq \sum_{k=2}^{K}\sum_{b \in A_k}  \mathcal{H}^{n-1}_g \big(\partial Q_{k,b} \cap E^{(1)}\big)
	 + C_nK \sum_{a \in A_1} \mathcal{H}^{n-1}_g \big( \partial Q_{1,a} \cap E^{(1)} \big)\,.
\end{align*}
Here we have used the fact that any $x \in \partial Q_{1,a}$ is contained in ${Q}_{k,b}$ for at most $C_n K$ cubes, thanks to the construction from disjoint cubes in charts.
Adding $\sum_{a \in A_1} \mathcal{H}_g^{n-1} \big( \partial Q_{1,a} \cap E^{(1)} \big)$ to both sides and recalling \eqref{figallimaggi}, we see that
 \[
\sum_{a \in A_1} \mathcal{H}_g^{n-1} \big( \partial Q_{1,a} \cap E^{(1)} \big) + \sum_{k=2}^{K}\sum_{b \in A_k} \mathcal{H}_g^{n-1}\big(\partial \hat{Q}_{k,b} \cap E^{(1)}\big) 
\leq C_n K \sum_{k=1}^K \sum_{a \in A_k} \mathcal{H}_{g}^{n-1}(E^{(1)}\cap \partial Q_{a,k})\leq \frac{|E|_g}{r}.
 \]
So, the collection of sets $\{ Q_{1,a}\}_{a \in A_1}  \cup \{ \hat{Q}_{k,b} \}_{2 \leq k\leq K, b \in A_k}$ satisfies \eqref{boundcover}, each set is open with diameter at most $r$, and the sets $Q_{1,a}$ are pairwise disjoint and also have trivial intersection with any $\hat{Q}_{k,b}$. 

Setting aside the sets $\{Q_{1,a}\}_{a \in A_1}$, we apply the same  procedure 
with the index $k=2$ playing the role of $1$ to refine the sets $\{\hat{Q}_{k,b}\}$ for $3\leq k \leq K$, $b \in A_k$, to make them disjoint from $\hat{Q}_{2,a}$ for any ${a \in A_2}$ and satisfy the properties above. Proceeding inductively and applying the refinement procedure $K$ times, we obtain a collection of sets satisfying the properties of the lemma. In particular, property \eqref{boundcover0} can be obtained by slightly tilting the initial collection of open sets $\{Q_i\}_{i=1}^N$.
\end{proof}

We now prove Proposition~\ref{prop: measure estimate}.

\begin{proof}[Proof of Proposition~\ref{prop: measure estimate}] Let $\bar{v}$ be as in \eqref{eqn: Wulff small volume}, let $r_0$ be as in Lemma \ref{cubecovering}, and set $v_0:=\min\{\bar v, r_0^n\}$. Let $\{Q_i\}_{i=1}^N$ be the collection of sets obtained applying Lemma \ref{cubecovering} to $E= \W$ with $r=v^{\sfrac{1}{n}}$. 
We first apply the isoperimetric inequality \eqref{eqn: Wulff small volume} and then, using \eqref{boundcover0}, we apply \cite[Theorem 16.3 (16.7)]{Mag} to compute
\begin{equation}\label{usee}
\begin{split}
\sum_{i=1}^N|\W\cap Q_i|_g^{\frac{n-1}n}\leq C \sum_{i=1}^N\F(\W \cap Q_i) \leq C\Big( \F(\W) +\sum_{i=1}^N \mathcal H^{n-1}_g (\W^{(1)}\cap \partial Q_i)\Big).
\end{split}
\end{equation}
Applying estimate \eqref{boundcover} from Lemma~\ref{cubecovering} to \eqref{usee}, we obtain
\begin{equation*}
\begin{split}
\sum_{i=1}^N|\W\cap Q_i|_g^{\frac{n-1}n}\leq  C \left(\kappa v^{\frac{n-1}n} + \frac{v}{r}\right).
\end{split}
\end{equation*}
Dividing by $v^{\frac{n-1}n}$ and using the choice $r=v^{\frac{1}{n}}$, we deduce that
\begin{equation}
 \sum_{i=1}^N\left( \frac{|\W\cap Q_i|_g}{v}\right)^{\frac{n-1}n}  \leq C \frac{v^{\sfrac{1}{n}}}{r} + C\kappa \leq C\kappa.
\end{equation}
The sets $Q_i$ are pairwise disjoint and cover $E$ up to a set of measure zero, so $\sum_{i=1}^N \frac{|\W \cap Q_i|_g}{v} = 1$.  Up to relabeling the indices, we can suppose that the sequence $a_i:=\frac{|\W\cap Q_i|_g}{v}$ is non-increasing, hence we can apply Lemma \ref{lem: J balls}  to $\{a_i\}_{i=1}^N$, to deduce that there exists $\JO = \JO (n ,\kappa , \ETA) \in \mathbb{N}$ such that 
$$\sum_{i=1}^{\JO} \frac{|\W\cap Q_i|_g}{v}   \geq 1-\ETA.$$
For each $i = 1, \dots , \JO,$ since $\text{diam}_g(Q_i)\leq r= v^{\frac{1}{n}}$, we can find a point $x_i \in M$  such that $Q_i \subset B_g(x_i, v^{1/n})$. So, again using the pairwise disjointness of the $Q_i$, 
\begin{equation}
\begin{split}
\big|\W\setminus \mlcup_{i=1}^{\JO} B_g(x_i, v^{1/n})\big|_g &\leq \big|\W_v \setminus \mlcup_{i=1}^{\JO} Q_i\big|_g = |\W|_g - \sum_{i=1}^{\JO} |\W\cap Q_i|_g \leq  \ETA v.
\end{split}
\end{equation}
Thus \eqref{eqn: measure est} holds, as desired.
\end{proof}

\subsection{Proof of the  diameter bound}\label{ssec: diameter bound}
In this section, we prove the  diameter bound of Theorem~\ref{thm: diameter bound}. In the proof, we will use (slight modifications of) sets of the form $\W_v':=\W_v\cap \bigcup_{i=1}^{\JO} B_g(x_i,R)$ as  competitors for the $\eo$-local minimality of $\W_v$, where $x_i,\dots , x_\JO$ are the points obtained in Proposition~\ref{prop: measure estimate}. To this aim, we first prove that $\W_v \Delta \W_v' \subset \Nb{g}{\partial \W_v}{\delta}$ in the following lemma. Recall that $\Nb{g}{E}{\delta}$ is the tubular neighborhood defined in \eqref{eqn: tube def}.
\begin{lemma}
 \label{lem: containment}
	Fix a Riemannian manifold $(M,g)$ of dimension $n\geq 2$. 
	Let $r_0>0$ be small enough so that $|B_g(x,r)|_g\geq \omega_n r^n/2 $ for any $x \in M$ and $r \in (0,r_0).$ Fix $\delta>0$ and $\JO\in \mathbb{N}$. Let $\gamma < \frac{\omega_n}{2} \min\{ r_0^n, \delta^n\}.$ If $\W\subset M$ is a measurable set with 
\[
\left|\W\setminus \mlcup_{i=1}^{\JO} B_g(x_i,R)\right|_g \leq\gamma
\]
for some $x_1,\dots, x_\JO\in M$ and $R>0$, then 
\begin{equation}
	\label{eqn: contain1}
\W \setminus \mlcup_{i=1}^{\JO}B_g(x_i, R+\delta) \subset \Nb{g}{\partial \W}{\delta}.
\end{equation}
\end{lemma}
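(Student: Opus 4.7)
The plan is to argue by contradiction: fix any $x \in \W \setminus \bigcup_{i=1}^{\JO} B_g(x_i, R+\delta)$ and suppose, toward contradiction, that $d_g(x, \partial \W) \geq \delta$, equivalently $B_g(x, \delta) \cap \partial \W = \emptyset$.

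The first key step is to upgrade this pointwise disjointness to the measure-theoretic identity $|B_g(x, \delta) \cap \W|_g = |B_g(x, \delta)|_g$. Working with the open cleaned-up representative of $\W$ afforded by \eqref{eqn: cleaned up sets}, $\partial \W$ is exactly the set of points where the $\W$-density lies strictly between $0$ and $1$. Hence every point of $B_g(x, \delta)$ has $\W$-density equal to either $0$ or $1$. The density-$0$ and density-$1$ sets are both open and disjoint in $M$, and together they cover the connected ball $B_g(x, \delta)$, so one of them contains the whole ball. Since $x \in \W$ and $\W$ is open, $x$ has density $1$, forcing the density-$1$ set to contain the entire ball and yielding the claimed measure identity.

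The second step is a triangle-inequality observation: since $d_g(x, x_i) \geq R + \delta$ for every $i$, we have $B_g(x, \delta) \cap B_g(x_i, R) = \emptyset$, so combining with the previous step
\[
|B_g(x, \delta)|_g = |B_g(x, \delta) \cap \W|_g \leq \Big|\W \setminus \mlcup_{i=1}^{\JO} B_g(x_i, R)\Big|_g \leq \gamma.
\]

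For the final contradiction, the standing assumption $|B_g(y, r)|_g \geq \omega_n r^n/2$ for $r \in (0, r_0)$, together with the monotonicity of ball volume in the radius (used when $\delta > r_0$, via $B_g(x, r_0) \subset B_g(x, \delta)$), gives
\[
|B_g(x, \delta)|_g \geq \tfrac{\omega_n}{2} \min\{\delta, r_0\}^n = \tfrac{\omega_n}{2}\min\{\delta^n, r_0^n\},
\]
which exceeds $\gamma$ by the hypothesis on $\gamma$. This contradicts the previous display, so $d_g(x, \partial \W) < \delta$, i.e.\ $x \in \Nb{g}{\partial \W}{\delta}$, proving \eqref{eqn: contain1}. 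No serious obstacle arises: the only mild subtlety is the density dichotomy in the first step, which is immediate once one works with the cleaned-up open representative of $\W$.
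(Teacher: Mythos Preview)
Your proof is correct and follows essentially the same contradiction argument as the paper: assume a point of $\W$ lies outside both the enlarged balls and the $\delta$-neighborhood of $\partial\W$, deduce that $B_g(x,\delta)$ is contained in $\W\setminus\bigcup_i B_g(x_i,R)$, and contradict the volume bound on $\gamma$. The paper simply asserts the containment $B_g(x_0,\delta)\subset\W$ ``by definition,'' whereas you spell out the connectedness/density justification via the cleaned-up representative---a harmless elaboration of the same step.
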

\begin{proof}
	Suppose there is a point $x_0 \in \W \setminus  \bigcup_{i=1}^{\JO}B_g(x_i, R+\delta)$ with $x_0 \not\in  \Nb{g}{\partial \W}{\delta}$. Then by definition, 
	$$
	B_g(x_0, \delta ) \subset \W \setminus  \mlcup_{i=1}^{\JO}B_g(x_i, R)
	$$ and so in particular
	\[
	|B_g(x_0,\delta)|_g \leq \left| \W \setminus \mlcup_{i=1}^{\JO}B_g(x_i, R)\right|_g \leq \gamma.
	\]
	On the other hand, $|B_g(x_0,\delta)|_g \geq \frac{\omega_n}{2} \min\{ r_0^n, \delta^n\}$, contradicting our choice of $\gamma.$ We conclude that no such point exists and the containment \eqref{eqn: contain1} holds.
\end{proof}
We are now ready to prove the main result of the section, using Lemma~\ref{lem: containment} and Proposition~\ref{prop: measure estimate} to establish a differential inequality for the volume of $\W_v$ outside $\JO$ balls of radius $r$.

\begin{proof}[Proof of Theorem~\ref{thm: diameter bound}] We begin by fixing parameters.
Let $\ETA =\ETA(n, \eo) \in(0,1/2)$ be a fixed number to be determined later in the proof. Let $\JO = \JO(n, F, \kappa, \ETA) = \JO(n, F,\kappa, \eo)$ be chosen according to Proposition~\ref{prop: measure estimate}. Choose $r_0= r_0(g, \ETA, \JO,F) = r_0(g, n,\kappa,F) <\text{inj}_g M$  to be small enough according to the assumptions of Lemma~\ref{lem: containment} and such that
\begin{equation}
	\label{eqn: metrics close}
	\| \psi^*g - g_{euc} \|_{C^1(B_{g_{euc}}(0,r_0))}  \leq \frac\ETA {12 \JO} \qquad \text{ and } \qquad  \sup_{\nu \in S^{n-1}}\| \psi^*F(\cdot, \nu)\|_{C^1(B_{g_{euc}}(0,r_0))} \leq 1,
\end{equation}
in normal coordinates $\psi$ at any $x\in M$.  Let $v_0= v_0(n,g, F,\kappa, \ETA)< r_0^n \om_n /2$ be small enough to apply Proposition~\ref{prop: measure estimate} and such that $3v^{\sfrac{1}{n}}_0 < r_0 / 100 \JO$ . Let $v<v_0$ be fixed. 
Throughout the proof, $c_n$ denotes a dimensional constant whose value may change from line to line.

By Proposition~\ref{prop: measure estimate}, we can find a collection of points $x_1, \dots ,x_{\JO} \in M$ such that 
\begin{equation}\label{eqn: meas est diam bound}
	\big| \W_v \setminus \mlcup_{i=1}^{\JO} B_g(x_i, v^{\sfrac{1}{n}})\big|_g < \ETA v\,.
\end{equation}
Let  $I = [v^{\sfrac{1}{n}}, 3v^{\sfrac{1}{n}}]$, and for $r \in I$ let 
\[
A(r) = \bigcup_{i=1}^{\JO} B_g(x_i, r) \qquad \text{ and } \qquad
u(r) = \frac{|\W_v \setminus A(r)|_g}{v}.
\]
 Note that $u$ is decreasing in  $r$, and $u(v^{\sfrac{1}{n}}) < \ETA$ by \eqref{eqn: meas est diam bound}. We claim there exists $c_n>0$ such that 
\begin{equation}
	\label{eqn: diff ineq}
[(v u(r) )^{\sfrac{1}{n}}]' \leq -c_n
\end{equation}
for all $r \in I$ with $u(r)>0$. Before proving the differential inequality \eqref{eqn: diff ineq}, let us see how it will allow us to conclude the proof of the proposition. Take $r \in I$ such that $u(r)>0$. Since $u$ is decreasing,  \eqref{eqn: diff ineq} holds for all $s \in (v^{\sfrac{1}{n}}, r].$ Integrating this differential inequality and recalling that $u(v^{\sfrac{1}{n}})<\ETA$, we find
\[
 c_n \big(\hat{r} - v^{\sfrac{1}{n}}\big) \leq (\ETA v)^{\sfrac{1}{n}}- [v u(\hat{r})]^{\sfrac{1}{n}} <(\ETA v)^{\sfrac{1}{n}} .
\]
In particular, provided we choose $\ETA < c_n/2$, we find that  $r < 2 v^{\sfrac{1}{n}}$. Thus $u$ vanishes on $[2v^{\sfrac{1}{n}}, 3v^{\sfrac{1}{n}}]$ and $\Omega_v \subset A(2v^{\sfrac{1}{n}})$. This shows the claim in the proposition.
\\

It  therefore remains to prove \eqref{eqn: diff ineq}. By the coarea formula, we find that
\begin{equation}
	\label{eqn: u prime}
u'(r) = -\frac{1}{v}\, \mathcal{H}_g^{n-1}(\partial A(r) \cap \W_v)
\end{equation}
for a.e. $r \in I$. To gain information about the right-hand side of \eqref{eqn: u prime}, we
 would like to use the sets 
 $$E_r := A(r) \cap \W_v$$
  for $r \in I$ as competitors for the local minimality of $\W_v$. But, since we may have $|E_r| <v$, we must modify the sets using Lemma~\ref{lemma: dilation} to make them admissible competitors. To this end, note that $A(3v^{\sfrac{1}{n}})$ has $1\leq K \leq \JO$ connected components $A_1, \dots , A_K$, and each connected component $A_k$ has diameter at most $6\JO\, v^{\sfrac{1}{n}}$. 
Thus, we can find a collection of disjoint open sets $V_1,\dots, V_K$ in $M$ such that $\text{diam}_g(V_k)\leq 12\JO\,v^{\sfrac{1}{n}}$ and $\Nb{g}{A_k}{v^{\sfrac{1}{n}} }\subset V_k.$ In particular, $\Nb{g}{E_r}{v^{\sfrac{1}{n}}}\subset \medcup_{k=1}^K V_k$ for each $r \in I$.  

In terms of the rescaled metric $h= v^{\sfrac{-2}{n}} g$, this means that $\text{diam}_h (V_k) \leq 12\JO$, and  $\Nb{h}{E_r}{1} \subset \medcup_{k=1}^K V_k$, and $|E_r|_h = 1 - u(r) \in [1-\ETA, 1]$. Moreover, since $v_0$ was chosen small enough  that $v^{\sfrac{1}{n}} <r_0/12\JO$, the estimates \eqref{eqn: metrics close} hold with $h$ in place of $g$ and $B(0, 12\JO)$ in place of $B(0,r_0)$. Thus, for each $r\in I$, we may apply Lemma~\ref{lemma: dilation} on $(M, h)$  with $D= 12 \JO$ and $E =E_r$. In terms of the metric $g$, the conclusion of the lemma tells us there is a set $\Emod_r$ with $|\Emod_r| =v$ such that  
\begin{align}
\label{eqn: emod rescale}
	\Emod_r \Delta E_r & \Subset \Nb{g}{\partial E_r}{c_n \JO \ETA v^{\sfrac{1}{n}}} ,\\
 	\label{eqn: energy compare}
 	\F(\Emod_{r}) & \leq \left\{ 1 +c_n \JO u(r)\right\} \F(E_r) .
\end{align}

We now claim that, if $\ETA$ is chosen to be small enough, we have 
\begin{equation}\label{symmdiff}
 \Emod_{r}\Delta \Omega_v \subset \Nb{g}{\partial \Omega_v}{\eo v^{\sfrac{1}{n}}},
\end{equation}
thus $\Emod_{r}$ is an admissible competitor for the local minimality of $\Omega_v$. 
Thanks to the triangle inequality property of the symmetric difference \eqref{eqn: triangle}, 
it suffices to show that $\W_v \Delta E_r$ and $E_r \Delta \Emod_{r}$ are both contained in this neighborhood of $\partial \W_v$. 
To obtain the first containment, we apply Lemma~\ref{lem: containment} with $\delta = \frac{\eo}{2} v^{\sfrac{1}{n}}$ and $\gamma = \ETA v$, with $\ETA>0$ chosen small enough depending on $\eo$ so that $\ETA v < \frac{\omega_n}{2} \min\{r_0^n, \e_0^n v/2^n\}$. This implies 
\begin{equation}
	\label{eqn: contain 2}
E_r \Delta \W_v = \W_v \setminus A(r) \subset \Nb{g}{\partial \W_v}{{\frac{\eo}{2} v^{\sfrac{1}{n}}}}.
\end{equation}
Next, to show the containment of $E_r \Delta \Emod_{r}$, thanks to \eqref{eqn: emod rescale}, it suffices to show that 
$$\Nb{g}{\pa E_r}{c_n \JO \ETA v^{\sfrac{1}{n}}} \subset \Nb{g}{\partial \W_v}{\eo v^{\sfrac{1}{n}}}.
$$
 Since $\partial E_r = \left( \partial \W_v \cap A(r)\right) \cup \left( \partial A(r)\cap \W_v \right) $ we thus find that
\begin{align*}
	\Nb{g}{\partial E_r}{c_n \JO \ETA r} &= \Nb{g}{\partial \W_v \cap A(r)}{c_n\JO \ETA r}   \ \medcup \ \Nb{g}{\partial A(r)\cap \W_v}{c_n\JO \ETA r} \\
	& \subset \Nb{g}{\partial \W_v}{c_n\JO \ETA r}  \ \medcup \ \Nb{g}{\partial A(r)\cap \W_v}{c_n \JO\ETA r} \,.
\end{align*}  
 Since $\partial A(r)\cap \W_v \subset \W_v \setminus A(r)$ and  $r\leq 3v^{\sfrac{1}{n}}$, if we take $c_n \JO \ETA r\leq \eo v^{\sfrac{1}{n}}/2$, we have $\Nb{g}{\partial \W_v \cap A(r)}{c_n \JO\ETA r}  \subset \Nb{g}{\partial \Omega_v}{\eo v^{\sfrac{1}{n}}}$ by \eqref{eqn: contain 2} above. Thus \eqref{symmdiff} holds. 
 
 Hence, $\Emod_r$ is an admissible competitor for the local minimality of $\W_v$, and  so
   we find that 
\begin{equation*}
\begin{split}
\F(\W_v)&\leq \F(\Emod_{r})\overset{\eqref{eqn: energy compare}}{\leq} (1 + C u(r))\F(E_r)\\
&=
(1+Cu(r)) \left(\F(\W_v ; A(r)) + C\mathcal{H}^{n-1}(\partial A(r) \cap \W_v)\right)\\
&\overset{\eqref{eqn: u prime}}{\leq}(1+C u(r))(\F(\W_v ; A(r)) +Cv|u'(r)|) \\
& \leq \F(\W_v ; A(r)) +C \kappa v (v^{-\sfrac 1n}u(r)+ |u'(r)|).
\end{split}
\end{equation*}
Subtracting $\F(\W_v; A(r))$ from both sides and adding $\int_{\partial A(r)\cap \W_v} F(x, -\nu_{A(r)}) \,d\mathcal{H}^{n-1}$ to both sides (and noting the latter term is bounded above by $ C v |u'(r)|$), we find
\begin{align*}
	\F(\W_v \setminus A(r)) \leq C \kappa v (v^{-\sfrac 1n}u(r)+ |u'(r)|)
\end{align*}
from which we deduce from \eqref{eqn: Wulff small volume} that
$$(vu(r))^{\sfrac{(n-1)}{n}} \leq C\kappa v(v^{-\sfrac 1n}u(r)+ |u'(r)|).$$
Choosing $\ETA$ small enough that  $C \kappa v u(r)\leq \frac{1}{2} u(r)^{\sfrac{(n-1)}{n}}$, we obtain  \eqref{eqn: diff ineq} and conclude the proof.
\end{proof}

\section{Uniform convergence to a Wulff shape, qualitatively}\label{sec: qual}  

In this section, we prove Theorem~\ref{thm: uniform convergence2}, showing that  for $v$ sufficiently small, a volume-constrained $\eo$-local minimizer $\W_v$  of $\F$ is uniformly close to a tangent Wulff shape of the appropriate volume at some point $x_0 \in M$. At this stage, the estimates are qualitative with respect to the volume parameter $v$.

\begin{theorem}\label{thm: uniform convergence2}
	Fix a closed Riemannian $n$-manifold $(M,g)$ and an anisotropic surface energy $\F$ with integrand $F$. For every $\kappa>0$ and $\eo>0$, there exists $v_0  = v_0(n,g, F, \kappa, \eo) \in (0, |M|_g)$  such that the following holds. Let $\W_v$ be a volume-constrained $\eo$-local minimizer $\W_v$ of volume $v<v_0$  with $\F(\W_v) \leq \kappa v^{\sfrac{(n-1)}{n}}$. Then $\W_v$ is connected and there is a point $x_0 \in M$ such that 
	\begin{equation}
		\label{eqn: qual hausdorff}
	d_{H,g}\Big(\partial \W_v,\, \exp_{x_0}(\partial(v^{\sfrac{1}{n}}K_{x_0}))\Big) <\frac{\eo v^{\sfrac{1}{n}}}{\beta_0} \qquad \mbox{and} \qquad \Big|\W_v \Delta \exp_{x_0}(v^{\sfrac{1}{n}}K_{x_0})\Big|_g <\frac{\eo v}{\beta_0} \,. 
	\end{equation}
	Here $K_{x_0}$ is the tangent Wulff shape at $x_0$ defined in \eqref{eqn: tangent wulff A}, and $$\beta_0(n,\kappa, F,\eo):= \frac{8 \C \eo}{\min\left\{ \Big(\frac{\m }{2\Lambda} n \omega_n^{\sfrac{1}{n}}\Big)^n, \frac{\e_0^n\, \omega_n }{2^{n+1}}\right\}}.
$$
\end{theorem}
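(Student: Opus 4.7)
The plan is to argue by contradiction. Suppose no such $v_0$ exists for some fixed $\kappa, \eo > 0$: then there is a sequence $v_k \to 0$ and volume-constrained $\eo$-local minimizers $\W_{v_k}$ with $|\W_{v_k}|_g = v_k$ and $\F(\W_{v_k}) \leq \kappa v_k^{(n-1)/n}$ such that for every $x_0 \in M$, at least one of the two inequalities in \eqref{eqn: qual hausdorff} fails. By Theorem \ref{thm: diameter bound}, for $k$ large there exist points $x_{k,1},\ldots,x_{k,\JO} \in M$ with $\W_{v_k} \subset \bigcup_i B_g(x_{k,i}, 2 v_k^{1/n})$. Passing to a subsequence, I may assume $x_{k,1} \in \overline{\W_{v_k}}$ and $x_{k,1} \to x_\infty \in M$. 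Working with the rescaled metric $h_k := v_k^{-2/n} g$, the set $\W_{v_k}$ has unit $h_k$-volume, $h_k$-diameter at most $4\JO$, and $\F_{h_k}(\W_{v_k}) \leq \kappa$.

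Next I would pull back to $T_{x_{k,1}} M$: set $\psi_k := \exp_{x_{k,1}}$ and $E_k := v_k^{-1/n} \psi_k^{-1}(\W_{v_k})$. The pulled-back, rescaled metric $v_k^{-2/n} \psi_k^*g(v_k^{1/n}\,\cdot\,)$ converges in $C^1_{\mathrm{loc}}$ to $g_{x_{k,1}}$, and after identifying $T_{x_{k,1}} M$ with $T_{x_\infty} M$ via parallel transport, $(T_{x_{k,1}}M, g_{x_{k,1}})$ converges to $(T_{x_\infty}M, g_{x_\infty})$. Using the compactness of sets of finite perimeter (the perimeter bound on $E_k$ follows from $\m\, P \leq \F \leq \M\, P$ and the $\F_{h_k}$ bound), extract an $L^1$-limit $E \subset T_{x_\infty}M$ of unit volume. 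By lower semicontinuity and continuity of $F$ in the point variable, $\bar\F_{x_\infty}(E) \leq \liminf \F_{h_k}(\W_{v_k}) \leq \kappa$. The $\eo$-local minimality of $\W_{v_k}$ transfers to the limit as follows: given any compactly supported, volume-preserving variation $\tilde E$ of $E$, push it forward by $\psi_k$ (at scale $v_k^{1/n}$), correct the volume using Lemma \ref{lemma: dilation}, and use it as a competitor for $\W_{v_k}$; the supports remain within the $\eo v_k^{1/n}$-tube of $\partial \W_{v_k}$ for $k$ large. This shows $E$ is a critical point of $\bar\F_{x_\infty}$ among sets of finite perimeter, so by \cite[Corollary 6.8]{DRKS}, $E = K_{x_\infty} + z_0$ for some $z_0 \in T_{x_\infty} M$.

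To conclude, I would upgrade $L^1$ to Hausdorff convergence of boundaries: $L^1$ convergence plus the classical density estimates from the quasi-minimality recalled in Section \ref{ssec: reg} (or equivalently the standard $\varepsilon$-regularity theory, since the limit is the smooth Wulff shape) yields $\partial E_k \to \partial E$ in Hausdorff distance. Now pick the new basepoint $\hat x_k := \exp_{x_{k,1}}(v_k^{1/n} z_0)$, noting $d_g(\hat x_k, x_{k,1}) = O(v_k^{1/n})$. Applying Proposition \ref{prop: appendix tangent wulff} with $(x_0,x_1) = (x_{k,1}, \hat x_k)$ and $r = v_k^{1/n}$ gives
\[
d_{H,g}\bigl(\exp_{\hat x_k}(v_k^{1/n} \partial K_{\hat x_k}),\ \exp_{x_{k,1}}(v_k^{1/n}(\partial K_{x_\infty} + z_0))\bigr) = o(v_k^{1/n}),
\]
where I also used continuity of the tangent Wulff shape $K_x$ in $x$ (so that $K_{x_{k,1}} \to K_{x_\infty}$). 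Combined with the Hausdorff closeness of $\partial \W_{v_k}$ to $\exp_{x_{k,1}}(v_k^{1/n}(\partial K_{x_\infty}+z_0))$ inherited from $\partial E_k \to \partial E$, this yields both inequalities in \eqref{eqn: qual hausdorff} for $x_0 = \hat x_k$ for all $k$ large, contradicting the assumption. Connectedness of $\W_{v_k}$ for large $k$ follows since $K_{x_\infty}+z_0$ is connected and the density estimates rule out stray components.

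The main obstacle is the shift of basepoint in the final step. The translation $z_0$ arising in the blowup is only controlled by $|z_0|_{g_{x_\infty}} \lesssim \JO$ and cannot be made to vanish, so $v_k^{1/n} z_0$ is of the natural length scale $v_k^{1/n}$, not smaller. Hence one \emph{cannot} simply compare $\W_{v_k}$ to a Wulff shape centered at $x_{k,1}$; a change of basepoint is essential, and since $F$ depends nontrivially on $x$, the tangent Wulff shapes $K_{x_{k,1}}$ and $K_{\hat x_k}$ differ. Controlling this mismatch quantitatively (to order $v_k^{2/n}$, hence negligible at Hausdorff scale $v_k^{1/n}$) is precisely what Proposition \ref{prop: appendix tangent wulff} provides, and is where the non-autonomy of $F$ enters the argument.
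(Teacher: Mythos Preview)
Your overall strategy matches the paper's, but there are several genuine gaps in the execution.

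The most serious is your claim that $\W_{v_k}$ has $h_k$-diameter at most $4\JO$. Theorem~\ref{thm: diameter bound} only gives containment in a \emph{union} of $\JO$ balls of $h_k$-radius $2$; these balls may lie arbitrarily far apart at scale $v_k^{1/n}$ (e.g.\ if $\W_{v_k}$ has two components at fixed $g$-distance), so the rescaled diameter is not bounded and you cannot pull back into a single chart at $x_{k,1}$. The paper handles this by pulling back each cluster of components into its own normal chart and embedding these charts as disjoint balls in one copy of $\R^n$ (Steps~1--2 of Theorem~\ref{thm: uniform convergence}); only \emph{after} proving that the limit $E$ has a single connected component (Step~5, via a second-variation argument ruling out two disjoint pieces) does one know a~posteriori that a single chart suffices. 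Your invocation of \cite[Corollary 6.8]{DRKS} has the same gap: that result gives a finite union of equal Wulff shapes, not a single one, until connectedness of $E$ is established.

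Second, the density estimates from Section~\ref{ssec: reg} are explicitly \emph{not} uniform in $v$---the paper says so there---so they cannot be used to upgrade $L^1$ convergence of $E_k$ to Hausdorff convergence of $\partial E_k$. The paper first proves uniform $(\Lambda,r_0)$-quasi-minimality in Lemma~\ref{lem: volume constraint} (whose proof itself relies on Theorem~\ref{thm: diameter bound}), derives uniform density estimates from that (Step~3), and only then transfers local minimality to the limit (Step~4): building an admissible competitor for $\W_{v_k}$ from a competitor $G$ for $E$ requires $E_k\Delta E\subset \Nb{g_{euc}}{\partial E_k}{\eo/4}$, i.e.\ Hausdorff convergence must precede this step, contrary to the order in your sketch.

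Finally, your connectedness argument for $\W_v$ is too vague and does not use $\beta_0$. The paper's argument (Section~\ref{sec:conc}) applies Lemma~\ref{lem: volume constraint} with competitor $\W_v\setminus U$ to show every component $U$ has $|U|_g$ at least the explicit constant in the denominator of $\beta_0$ (inequality~\eqref{boundd}); the value of $\beta_0$ is then chosen precisely so that the Hausdorff estimate in~\eqref{eqn: qual hausdorff} forces any second component to violate this lower bound.
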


\begin{remark}{\rm
In Theorem \ref{thm: uniform convergence3} we show that estimate \eqref{eqn: qual hausdorff} actually holds for any $\beta_0>0$, provided $v_0$ is also taken to be sufficiently small depending on $\beta_0$. However, the explicit choice of $\beta_0$ in the theorem statement provides a volume threshold under which local minimizers are connected.}
\end{remark}
\begin{remark}\label{rmk: true diameter}{\rm
Since $K_{x_0} \subset B_{g_{x_0}}(0, R')$ for a constant $R'>0$ depending only on $F$ and $g$,  Theorem~\ref{thm: uniform convergence2} implies that $\W_v$ satisfies a diameter bound $\Omega_v \subset B_{g}(x_0, R \, v^{\sfrac{1}{n}})$ 
where $R = R' + 2\e_0/\beta_0$ depends only on $F, g,$ and $n$. Moreover, \eqref{eqn: qual hausdorff} implies that
$$\W_v \Delta \exp_{x_0}(v^{\sfrac{1}{n}}K_{x_0}) \subset \Nb{g}{\partial \W_v}{{\eo v^{\sfrac{1}{n}}}/{\beta_0}}.$$
}
\end{remark}

We prove Theorem~\ref{thm: uniform convergence2} via  a compactness argument, using Theorem~\ref{thm: diameter bound} crucially at various points. To see the idea, take a sequence of volume-constrained $\eo$-local minimizers $\W_k$ of volume $v_k\to 0$. Using Theorem~\ref{thm: diameter bound}, we can pull back $\W_k$ in charts and show that,  after rescaling, the resulting sequence of sets $E_k$ in $\R^n$  subsequentially converges in $L^1$ to a limit set $E$ with unit volume.  Again using Theorem~\ref{thm: diameter bound}, we show in Section~\ref{ssec: no volume contraint} that the sets $\W_k$ satisfy scale-invariantly uniform density estimates. This upgrades the $L^1$ convergence of $E_k$ to Hausdorff convergence of the boundaries  and crucially allows us to deduce that the limit set $E$ is itself a volume-constrained local minimizer of a translation invariant anisotropic surface energy.  A scaling argument and the Alexandrov-type theorem \cite[Corollary 6.8]{DRKS} show that $E$ is a translation of the corresponding  Wulff shape. The translation invariance leads to  technical challenges bringing this statement back to $(M,g)$, which we tackle by comparing tangent Wulff shapes at different points using Proposition~\ref{prop: appendix tangent wulff}.

\subsection{Uniform quasi-minimality}\label{ssec: no volume contraint}
In this section, we prove that volume-constrained $\eo$-local minimizers of $\F$ with small volume satisfy a quasi-minimality property among non-volume-constrained competitors. In the language of \cite{Mag}, after rescaling, they satisfy a local version of being $(\Lambda ,r_0)$-minimizers of $\F$. Crucially,  the parameters $\Lambda$ and $r_0$ are independent of $v$.  Theorem~\ref{thm: diameter bound} is key in the proof, as it allows us to apply Lemma~\ref{lemma: dilation} to modify a local competitor into one with the prescribed volume while estimating the error in a uniform way. 
\begin{lemma}\label{lem: volume constraint}
 Let $(M,g)$ be a closed Riemannian manifold of dimension $n\geq 2$ and let $\F$ be an anisotropic surface energy on $M$. 
For each  $\eo>0$ and $\kappa>0$, there exist $v_0= v_0(n,\eo, \kappa, g)>0$ and $\Lambda=\Lambda(n,\kappa)>1$ such that the following holds. 
If $\W_v \subset M$ is a volume-constrained $\eo$-local minimizer of $\F$ with volume $v\in (0, v_0)$ and $\F(\W_v) \leq \kappa v^{\sfrac{(n-1)}{n}}$,  then $\W_v$ is a local $(\Lambda v^{-\sfrac{1}{n}} ,  \frac{\eo}{2}v^{\sfrac{1}{n}} )$-minimizer of $\F$, i.e. 
 \[
\F(\W_v) \leq \F(E) + \Lambda\,v^{-\sfrac{1}{n}} |\W_v\Delta E|_g. 
 \]
  for any set  $E\subset M$ such that $\W_v\Delta E \Subset \Nb{g}{ \partial \W_v}{\frac{\eo}{2}\, v^{\sfrac{1}{n}} }$.
 \end{lemma}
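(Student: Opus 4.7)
The plan is to apply Lemma~\ref{lemma: dilation} to modify a competitor $E$ into a set $\Emod$ with $|\Emod|_g=v$, at the cost of a quantifiable energy increase, and then invoke the $\eo$-local minimality of $\W_v$. The key enabling input is the uniform diameter bound from Theorem~\ref{thm: diameter bound}, which supplies a covering of $\W_v$ by $\JO$ balls of radius $2v^{\sfrac{1}{n}}$; after rescaling by $h = v^{-\sfrac{2}{n}} g$, this becomes a covering by $\JO$ balls of $h$-radius $2$, independent of $v$.

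First I would dispose of two trivial regimes. Fix a parameter $\ETA=\ETA(n,\kappa,F,g,\eo)>0$ to be chosen at the end. If $\F(E)>\F(\W_v)$ the claimed inequality is immediate, so assume $\F(E)\le \F(\W_v)\le \kappa v^{\sfrac{(n-1)}{n}}$. If $|\W_v\Delta E|_g\ge \ETA v$, then
\begin{equation*}
\Lambda\, v^{-\sfrac{1}{n}}|\W_v\Delta E|_g \ge \Lambda\,\ETA\, v^{\sfrac{(n-1)}{n}}\ge \kappa\, v^{\sfrac{(n-1)}{n}}\ge \F(\W_v)
\end{equation*}
as soon as $\Lambda\ge \kappa/\ETA$, and the inequality is trivial. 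So I may assume $|\W_v\Delta E|_g<\ETA v$, which in particular gives $\bigl||E|_h-1\bigr|\le v^{-1}|\W_v\Delta E|_g<\ETA$.

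Next I would construct the sets needed to apply Lemma~\ref{lemma: dilation} on $(M,h)$. By Theorem~\ref{thm: diameter bound}, $\W_v\subset \bigcup_{i=1}^{\JO}B_h(x_i,2)$, and since $E\Delta\W_v\subset \Nb{g}{\pa\W_v}{(\eo/2)v^{\sfrac{1}{n}}} = \Nb{h}{\pa\W_v}{\eo/2}$, we obtain $\Nb{h}{E}{1}\subset \bigcup_i B_h(x_i,3+\eo/2)$. Clustering the enlarged balls into their connected unions produces disjoint open sets $V_1,\dots,V_K$ of $h$-diameter at most $D:=2\JO(3+\eo/2)$ whose union contains $\Nb{h}{E}{1}$. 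For $v_0$ small enough depending on $n,g,F,\eo,\kappa$, the manifold $(M,h)$ satisfies the hypotheses of Lemma~\ref{lemma: dilation} on $B_{g_{euc}}(0,D)$: the injectivity radius of $h$ is $v^{-\sfrac{1}{n}}\mathrm{inj}_gM\to\infty$, the rescaled metric in normal coordinates becomes arbitrarily close to $g_{euc}$, and by the $C^1$ regularity of $F$ in $x$ the pulled-back integrand becomes essentially $x$-independent on the relevant ball. Lemma~\ref{lemma: dilation} then produces a set $\Emod$ with $|\Emod|_g=v$,
\begin{equation*}
\Emod\Delta E\Subset \Nb{g}{\pa E}{c_nD\ETA v^{\sfrac{1}{n}}},\qquad \F(\Emod)\le \F(E)\bigl(1+c_nD\, v^{-1}|\W_v\Delta E|_g\bigr).
\end{equation*}

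Before concluding I need $\Emod$ to be an admissible competitor, i.e. $\Emod\Delta\W_v\subset \Nb{g}{\pa\W_v}{\eo v^{\sfrac{1}{n}}}$. Since $E$ agrees with $\W_v$ outside $\Nb{g}{\pa\W_v}{(\eo/2)v^{\sfrac{1}{n}}}$, we have $\pa E\subset \Nb{g}{\pa\W_v}{(\eo/2)v^{\sfrac{1}{n}}}$, so by \eqref{eqn: triangle}
\begin{equation*}
\Emod\Delta\W_v\subset (\Emod\Delta E)\cup(E\Delta\W_v)\subset \Nb{g}{\pa\W_v}{(\eo/2+c_nD\ETA)v^{\sfrac{1}{n}}}.
\end{equation*}
Choosing $\ETA$ small enough that $c_nD\ETA\le \eo/2$ makes $\Emod$ admissible. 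Then $\eo$-local minimality of $\W_v$ combined with $\F(E)\le \kappa v^{\sfrac{(n-1)}{n}}$ yields
\begin{equation*}
\F(\W_v)\le \F(\Emod)\le \F(E)+c_nD\kappa\, v^{-\sfrac{1}{n}}|\W_v\Delta E|_g,
\end{equation*}
and the proof closes by setting $\Lambda:=\max\{c_nD\kappa,\kappa/\ETA\}$.

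The main obstacle — and the reason Theorem~\ref{thm: diameter bound} is essential — is constructing disjoint open sets $\{V_k\}$ of uniformly bounded $h$-diameter covering $\Nb{h}{E}{1}$: without a scale-invariant diameter control on $\W_v$ (and hence on $E$), one cannot apply Lemma~\ref{lemma: dilation} with a volume-independent $D$, and $\Lambda$ would degenerate as $v\to 0$. Once this covering is in hand, everything else is careful bookkeeping to ensure the dilation remains inside the allowed $\frac{\eo}{2}v^{\sfrac{1}{n}}$-tubular neighborhood of $\pa\W_v$.
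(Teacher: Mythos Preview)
Your proposal is correct and follows essentially the same strategy as the paper: invoke the diameter bound of Theorem~\ref{thm: diameter bound} to obtain a scale-invariant cover, apply Lemma~\ref{lemma: dilation} on $(M,h)$ to repair the volume of a competitor at a controlled energy cost, and check admissibility via the triangle inequality for symmetric differences. The only organizational difference is that the paper recasts the claim as minimality for the augmented functional $\E_h(E)=\F_h(E)+\Lambda\big||E|_h-1\big|$ and restricts to competitors with $\E_h(E)\le 2\E_h(\W_v)$, whereas you achieve the same effect by a direct case split on $\F(E)>\F(\W_v)$ and on $|\W_v\Delta E|_g\ge\ETA v$; your clustering of the $\JO$ balls into disjoint $V_k$ with $D=2\JO(3+\eo/2)$ is in fact a bit more careful than the paper's informal use of $D=4$.
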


\begin{proof} Let $v_0$ and $\Lambda$ be  fixed constants to be specified in the proof and fix $v\in (0, v_0)$. Let $h = v^{-\sfrac{2}{n}}g$, so that $\W_v$ is a volume-constrained $\eo$-local minimizer of $\F_h$ with $|\W_v|_h=1$. In terms of the rescaled metric, we will prove that for any $E\subset M$ with $\W_v \Delta E \subset \Nb{h}{\partial \W_v}{\frac{\eo}{2}}$, we have 	
	\begin{equation}
		\label{eqn: h claim}
			\E_h(\W_v) \leq \E_h(E) \qquad \text{ where } \qquad
	\E_h(E) := \F_h(E) + \Lambda \left||E|_h -1\right|.
	\end{equation}
	Then, noting that $||E|_h -1| \leq |E\Delta \W_v|_h$ and scaling back to the original metric, this implies the lemma.
	
 To show \eqref{eqn: h claim}, it suffices to show that for any $E\subset M$ with $E \Delta \W_v \subset \Nb{h}{\partial \W_v}{\frac{\eo}{2}}$ and $\E_v(E) \leq 2\E_h(\W_v)$, we may find a set $\Emod$ with  
$$
\E_h (\Emod) \leq \E_h(E), \qquad
|\Emod|_h = 1, \qquad\Emod \Delta \W_v \subset \Nb{h}{\partial \W_v}{\eo}  ,$$
since then taking $\Emod$ as a competitor for the local minimality of $\W_v$, directly implies \eqref{eqn: h claim}.

So, fix $E\subset M$ with $E \Delta \W_v \Subset \Nb{h}{\partial \W_v}{\frac{\eo}{2}}$ and $\E_h(E) \leq 2\E_h(\W_v)$. Notice that 
\[
 \left||E|_h-1 \right| \leq \frac{\E_h(E)}{\Lambda} \leq \frac{2\E_h(\W_v)}{\Lambda}  = \frac{2\F_h(\W_v)}{\Lambda}  \leq \frac{2\kappa}{\Lambda}
\] 
Recall the dimensional constants $\ETA_0,c_n$  in Lemma~\ref{lemma: dilation}. Choose $\eta<\min\{\eta_0,\frac{\eo}{8c_n}\}$ and $\Lambda > 2\kappa/\ETA$.  According to Theorem~\ref{thm: diameter bound}, $\W_v \subset \cup_{j=1}^\JO B_h(x_i, 2)$ for points $x_1,\dots x_\JO \in M$, and thus we also have $\Nb{h}{E}{\frac{\eo}{2}} \subset \Nb{h}{\W_v}{\eo} \subset  \cup_{j=1}^\JO B_h(x_i, 4)$.
 Then, provided we choose $v_0$ small enough so \eqref{eqn: metrics close 2} holds with $D=4$, we can apply  Lemma~\ref{lemma: dilation} to obtain a set $\Emod$ that, thanks to our choice of $\eta$, satisfies
 \begin{align*}
	\Emod \Delta E \subset \Nb{h}{\pa E}{\frac{\eo}{2}}\subset \Nb{h}{\partial \W_v}{\eo}\qquad \text{ and }\qquad	|\Emod|_h=1,
\end{align*}
and
\begin{align*}
	 \F_h(\Emod) &\leq \F_h(E)(1 + C_n \left||E|_h-1\right|) \\
	 &\leq \F_h(E) + 2 \F(\W_v) C_n \left||E|_h-1\right| \leq  \F_h(E) + C_n\kappa \left||E|_h-1\right|. 
\end{align*}
Therefore, 
$\E_h(\Emod) = \F_h(\Emod)\leq  \F_h(E) +  C_n\kappa \left||E|_h-1\right| \leq \E_h(E)$
so long as $\Lambda > C_n \kappa.$ 
\end{proof}

\subsection{An intermediate form of Theorem~\ref{thm: uniform convergence2}}
Next, we prove a slightly weaker version of Theorem~\ref{thm: uniform convergence2}: In Theorem~\ref{thm: uniform convergence} below, a volume-constrained $\eo$-local minimizer $\W_v$  is shown to be close to a (projected via $\exp_{x_0}$) Wulff shape translated by some $y \in T_{x_0}M$.  The modulus of this translation, while tending to zero as $v\to 0$, could  be very large relative to the natural length scale $v^{\sfrac{1}{n}}$. In Section \ref{sec:conc} we will center to correct this translation error and prove that $\W_v$ is connected to complete the proof of Theorem~\ref{thm: uniform convergence2}.
\begin{theorem}\label{thm: uniform convergence}
	Fix a closed Riemannian $n$-manifold $(M,g)$ and an anisotropic surface energy $\F$ with integrand $F$. For every $\kappa>0$, $\beta>0$, $\eo>0$, and $\rho >0$, there exists $v_0  = v_0(g, F, \kappa, \eo, \beta, \rho)>0$ such that the following holds. 
	Let $\W_v$ be a volume-constrained $\eo$-local minimizer  of volume $v<v_0$  with $\F(\W_v) \leq \kappa v^{\sfrac{(n-1)}{n}}$. There are points $x \in M$ and $y \in B_{g_x}(0, \rho )\subset  T_xM$  such that 
	\begin{equation}
		\label{eqn: qual hausdorff1}
	d_{H,g}\big(\partial \W_v, \exp_{x}(\partial v^{\sfrac{1}{n}} K_{x} + y)\big) <\frac{\eo v^{\sfrac{1}{n}}}{\beta} \qquad \mbox{and} \qquad 
	\big|\W_v \Delta \exp_{x}(v^{\sfrac{1}{n}}K_{x} + y)\big|_g <\frac{\eo v}{\beta} \,. 
	\end{equation}
\end{theorem}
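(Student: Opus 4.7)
The plan is to argue by contradiction and compactness, combining the diameter bound (Theorem~\ref{thm: diameter bound}), the uniform quasi-minimality (Lemma~\ref{lem: volume constraint}), and the anisotropic Alexandrov rigidity of \cite[Corollary 6.8]{DRKS}. Suppose the theorem fails for some $\kappa, \beta, \eo, \rho > 0$: there is a sequence $\W_k$ of $\eo$-local minimizers with $v_k := |\W_k|_g \to 0$ satisfying the energy bound, yet \eqref{eqn: qual hausdorff1} fails for every $x \in M$ and $y \in B_{g_x}(0, \rho)$.

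First, Theorem~\ref{thm: diameter bound} provides a covering $\W_k \subset \bigcup_{i=1}^{\JO} B_g(x_{k,i}, 2 v_k^{\sfrac{1}{n}})$, and by pigeonhole one ball $B_g(x_k, 2v_k^{\sfrac{1}{n}})$ (after relabeling $x_k := x_{k,1}$) carries at least $v_k/\JO$ of the volume. Pass to a subsequence with $x_k \to x_* \in M$. Identify $T_{x_k} M \cong \R^n$ via orthonormal bases converging (by parallel transport along minimizing geodesics) to a basis of $T_{x_*} M$, and define $\Psi_k(z) := \exp_{x_k}(v_k^{\sfrac{1}{n}} z)$. Then $\Psi_k^*(v_k^{-\sfrac{2}{n}} g) \to g_{euc}$ in $C^1_{loc}(\R^n)$ and $\Psi_k^* F(\cdot, \nu) \to F(x_*, \nu)$ uniformly on compact sets. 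Set $E_k := \Psi_k^{-1}(\W_k \cap B_g(x_k, r_0)) \subset \R^n$; by the diameter bound these lie in $O(\JO)$ balls of radius $O(1)$. By Lemma~\ref{lem: volume constraint}, $\W_k$ is a uniform $(\Lambda v_k^{-\sfrac{1}{n}}, \eo v_k^{\sfrac{1}{n}}/2)$-quasi-minimizer of $\F$, so $E_k$ is a uniform $(\Lambda, \eo/2)$-quasi-minimizer of the pulled-back energy. Standard density estimates for quasi-minimizers provide uniform perimeter bounds and upgrade a subsequential $L^1_{loc}$ limit $E_k \to E_\infty$ to Hausdorff convergence of boundaries on compact sets, with $|E_\infty| \in [1/\JO, 1]$ and $E_\infty$ an $\eo/2$-local minimizer of $\bar{\F}_{x_*}$.

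By the Alexandrov-type rigidity of \cite[Corollary 6.8]{DRKS}, $E_\infty$ must be a disjoint union of translated tangent Wulff shapes $y_j + \lambda K_{x_*}$ of a common size $\lambda$ (determined by the shared Lagrange multiplier). A direct second-variation argument excludes multiple components: transferring infinitesimal mass $t$ between two components by homothetically rescaling them to volumes $\lambda^n|K_{x_*}| \pm t$ is admissible in the $\eo/2$-tube of $\partial E_\infty$ for small $t$, and the resulting change in $\bar{\F}_{x_*}$ is strictly negative by strict concavity of $s \mapsto s^{\sfrac{(n-1)}{n}}$, contradicting local minimality of $E_\infty$. The same homothetic mass-transfer competitor, applied on $(M, g)$ directly, excludes additional clusters of $\W_k$ sitting outside the chart at $x_k$: any such cluster would, by the analogous analysis in its own chart, limit to a Wulff shape of positive mass, and the mass-transfer competitor between the central cluster and one away from it is admissible in the $\eo v_k^{\sfrac{1}{n}}$-tube of $\partial \W_k$ and strictly decreases $\F$ for $k$ large, contradicting $\eo$-local minimality. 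Hence $E_\infty = y_\infty + K_{x_*}$ with $|E_\infty| = 1$.

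Finally, we transfer the limit information back to $M$. Undoing the pullback and rescaling, $\partial \W_k$ is Hausdorff-close to $\exp_{x_k}(v_k^{\sfrac{1}{n}}(y_\infty + K_{x_*}))$ up to $o(v_k^{\sfrac{1}{n}})$, and the symmetric difference has volume $o(v_k)$. To rewrite the target in terms of the tangent Wulff shape at $x_k$, we invoke Proposition~\ref{prop: appendix tangent wulff} (with $r = v_k^{\sfrac{1}{n}}$ and comparison distance $d_g(x_k, x_*) \to 0$), which places $\exp_{x_k}(v_k^{\sfrac{1}{n}}(y_\infty + K_{x_*}))$ within Hausdorff distance $O(v_k^{\sfrac{1}{n}} d_g(x_k, x_*)) = o(v_k^{\sfrac{1}{n}})$ of $\exp_{x_k}(v_k^{\sfrac{1}{n}} K_{x_k} + y_k)$, where $y_k := v_k^{\sfrac{1}{n}} y_\infty \in T_{x_k} M$. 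Since $|y_k|_{g_{x_k}} = O(v_k^{\sfrac{1}{n}}) < \rho$ for $k$ large, the pair $(x_k, y_k)$ satisfies \eqref{eqn: qual hausdorff1}, contradicting the hypothesis. The main technical obstacles are the exclusion of multi-cluster configurations, handled by the second-variation argument both in the rescaled limit and on $M$ itself, and the comparison of tangent Wulff shapes at the nearby points $x_k$ and $x_*$, for which Proposition~\ref{prop: appendix tangent wulff} is essential.
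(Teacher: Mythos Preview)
Your overall strategy---contradiction, rescaling in charts, quasi-minimality to get density estimates and Hausdorff convergence, DRKS rigidity, second variation to exclude multiple components---matches the paper's. But there is a genuine gap in how you handle the possibility that mass is spread across several of the $\JO$ balls.

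You center a single chart at $x_k$ and pass to a limit $E_\infty$ with $|E_\infty|\in[1/\JO,1]$, then assert that $E_\infty$ is an $\eo/2$-local minimizer of $\bar\F_{x_*}$. This step is not justified: to test $E_\infty$ against a competitor $G$ with $|G|=|E_\infty|$, you must build from $G$ a competitor for $\W_k$ on $(M,g)$ with volume exactly $v_k$, which forces you to account for the mass of $\W_k$ living in the \emph{other} balls and to correct the total volume. You do not explain this. Your subsequent attempt to rule out those other clusters by a mass-transfer competitor ``applied on $(M,g)$ directly'' is also not rigorous as stated: at finite $k$ the clusters are only approximately Wulff shapes for \emph{different} frozen integrands $F_{x_j}$, so the first variation of the transfer need not vanish and the sign of the energy change requires an error analysis you do not carry out. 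The paper sidesteps both issues by pulling back \emph{all} $\JO$ clusters into disjoint balls of a single Euclidean space, defining a single functional $\F^*$ that equals $\F^*_{x_j}$ on the $j$th ball, and showing the full limit $E$ (with $|E|=1$) is a volume-constrained local minimizer of $\F^*$; the second-variation argument is then performed cleanly in the limit, after which DRKS applies in the one surviving ball.

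A smaller point: your final use of Proposition~\ref{prop: appendix tangent wulff} is both unnecessary and not quite the statement of that proposition (which compares exponentials at two different base points, not two different Wulff shapes pushed through $\exp_{x_k}$). The paper simply centers at the \emph{limit} point $x_1$ and absorbs the offset of $x_{i,1}$ from $x_1$ into $y_i = v_i^{1/n}y + \psi_1^{-1}(x_{i,1})$; since $x_{i,1}\to x_1$ and $|y|\leq 8\JO$, one has $|y_i|<\rho$ for $i$ large, yielding the contradiction without invoking Proposition~\ref{prop: appendix tangent wulff}.
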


\begin{proof}
	We divide the proof in several steps:
	
{\it Step 0: Setup.}	
Supposing for the sake of contradiction that the statement is false, we find $r_0\leq \frac{\text{inj}_gM}{2}$ and a sequence of numbers $v_i \to 0$ and of volume-constrained $\eo$-local minimizers $\W_i$ of $\F$ with volume $v_i$ such that 
\begin{equation}
	\label{eqn: contra}
	d_{H,g}\big(\partial \W_i , \exp_x (\partial v_i^{\sfrac{1}{n}} K_x + y)\big) \geq \frac{\eo v_i^{1/n}}{\beta}\qquad \mbox{or} \qquad
	 \big|\W_i \Delta \exp_x(v_i^{\sfrac{1}{n}}K_x + y)\big|_g \geq \frac{\eo v_i}{\beta}
\end{equation}	
	for every $x \in M$ and $y \in B_{g_x}(0, r_0 )\subset  T_xM$. Let $v_0$ be chosen according to Theorem~\ref{thm: diameter bound}. Since  $v_i <v_0$ for $i$ large enough, we may apply Theorem~\ref{thm: diameter bound} to find a sequence of finite families of points 
 $\{x_{i,j}: i\in \N, j=1, \dots ,\JO\} \subset M$ such that $\W_i\subset \bigcup_{j=1}^{\JO} B_g(x_{i,j}, 2v^{\sfrac{1}{n}}_i)$. For  fixed $i \in \mathbb{N}$ and for each $j=1, \dots ,\JO$, let $\W_{i,j}$ be the union of all the connected components of $\W_i$ that  intersect $B_g(x_{i,j}, 2v^{\sfrac{1}{n}}_i)$ and do not intersect any of the previous balls $\{B_g(x_{i,k}, 2v^{\sfrac{1}{n}}_i)\}_{k=1}^{j-1}$. In particular, we observe that 
\begin{equation}\label{eqn:forseserve}
\W_{i,j}\subset B_g\big(x_{i,j}, 4\JO v^{\sfrac{1}{n}}_i\big) \quad \mbox{ for every $j=1,\dots, \JO$},
\end{equation}
 and $\W_{i,j}$ are pairwise disjoint in $j$.
 Since $M$ is compact, $x_{i,j} \to x_j \in M$ for every $j=1, \dots ,\JO$ after passing to a subsequence in $i$. For $i$ large enough, $B_g(x_{i,j}, 8 \JO v_i^{\sfrac{1}{n}} )\subset B_g(x_j, r_0)$ for every $j=1, \dots ,\JO$.\\

{\it Step 1: Pulling back and rescaling the problem.}
Fix an orthonormal basis $\{e_1, \dots, e_n\}$ for Euclidean space, and for each $j=1,\dots , \JO$, let $\psi_j: B_{g_{euc}}(0,r_0) \to B_g(x_j, r_0)$ be a normal coordinate map at $x_j$ and  let $z_j = 17J_0(j-1)e_1 \in \R^n$.
For large enough $i$, we can define the map
\[
\phi_{i,j} : B_{g_{euc}}\big(z_j, \,8\JO\big)\subset \R^n \to  M, \qquad \phi_{i,j}(x) := \psi_j \left(v_i^{\sfrac{1}{n}}(x-z_j) + \psi^{-1}_j(x_{i,j})\right),
\]
which first maps its domain to $B_{g_{euc}}(\psi^{-1}_j(x_{i,j}), 8\JO v_i^{\sfrac{1}{n}})$ homothetically, then maps this small ball to $M$ by the normal coordinate map.
Identifying the $\JO$ (a priori distinct)  copies of Euclidean space via the basis $\{e_1, \dots, e_n\}$, we view the balls $B_{g_{euc}}\big(z_j, \,8\JO\big)$ as disjoint subsets of the same Euclidean space.

 In particular, by \eqref{eqn:forseserve} we may define the pulled-back sets
 \begin{equation}\label{eqn:contain}
E_{i,j} := \phi_{i,j}^{-1}(\W_{i,j})\Subset  B_{g_{euc}}\big( z_j, 6\JO \big) \subset\R^n\,.
\end{equation}
We then let 
\[
E_i := \bigcup_{j=1}^{\JO} E_{i,j}, \quad \text{ with } \quad E_i \Subset X:=\bigcup_{j=1}^{\JO} B_{g_{euc}}\big(z_j, 6\JO\big) \subset\R^n\,.
\]
Observe that $X\subset B_{g_{euc}}(0,24\JO)$.

Let us define the rescaled metrics $h_i = v_i^{\sfrac{-2}{n}} g$, so that $|\W_i|_{h_i} =1$ and $\F_{h_i}(\W_i) \leq \kappa$ for all $i$. Up to passing to a subsequence with respect to $i$, we have
 \begin{equation}
 	\label{eqn: rough metric close}
 (1-1/i)g_{euc} \leq \phi_{i,j}^*h_i \leq (1+1/i)g_{euc} \quad \text{  on } B_{g_{euc}}\big( z_j, 8\JO \big)\qquad \forall i\in \N , \, j=1\dots,\JO\,.
 \end{equation}
Hence, by \eqref{eqn: rough metric close}, 
\begin{equation}
	\label{eqn: volumes converge}
||E_{i} |_{g_{euc}} - 1| \to 0.
\end{equation}

 Recall that the restriction $F_{x_j}(\cdot) := F(x_j, \cdot)$ of the anisotropic integrand defines a translation invariant surface energy $\bar{\F}_{x_j}$ on $T_{x_j}M$. 
  The normal coordinate map $\psi_j:B_{g_{euc}}(0,r_0) \to B_g(x_j, r_0) \subset M$ induces the linear map $L = (d\psi_j)_{|0}: \R^n \to T_{x_j} M$, which has $\det{L} =1$.  Let us denote by $\F^*_{x_j}$ the pulled-back tangent surface energy at $x_j$ by $L$, i.e. for a set $E\subset \R^n$ we let  
  \[
  \F^*_{x_j}(E)  = \bar{\F}_{x_j}(L(E)) = \int_{\partial^* E} F(x_j,L(\nu_E(y))\, d\Hi^{n-1}_{g_{euc}}(y).  
\]
Furthermore, for every set $E \subset X$ we define
$$
 \F^*(E) = \sum_{j=1}^\JO \F^*_{x_j}\big(E\cap B_{g_{euc}}(z_j, 8\JO)\big).
 $$
 It is easy to see that $\F^*$ can be extended to every subset of $\R^n$ and that $\F^*\equiv \F^*_{x_j}$ for subsets of $B_{g_{euc}}(z_j, 8\JO)$ as these balls are disjoint.
Let $\F^*_{i,j}(E) = \F_{h_i}(\phi_{i,j}(E))$ be the pulled-back $h_i$-surface energy of a set $E \subset B_{g_{euc}}(z_j, 8\JO)$ and let $\F^*_i(E) = \sum_{j=1}^\JO \F^*_{i,j}(E\cap B_{g_{euc}}(z_j, 8\JO))$ for every set $E \subset X$. By assumption,  $ \F^*_i(E_i) \leq \kappa$.   Moreover, by \eqref{eqn: rough metric close} and the continuity of $F$ with respect to $x$, we have 
 \begin{equation}
 	\label{eqn: F conv}
|  \F_{i,j}^*(E) - \F^*_{x_j}(E) | \leq \omega(i) \F_{i,j}^*(E) 
  \end{equation}
 for a modulus of continuity $\omega$ depending on $F$ and $g$.\\

{\it Step 2: Compactness in $L^1$.} Since $F(x ,\cdot ) \geq \m$, thanks to \eqref{eqn: F conv} and the assumption   $ \F^*_i(E_i) \leq \kappa$, we see that $P(E_i) \leq 2\m^{-1} \kappa$ for all $i$ sufficiently large. Moreover, using \eqref{eqn:contain}, we see that up to a subsequence,  
\begin{equation}\label{eqn: L1 conv}
E_{i,j} \to E_{0,j} \mbox{ in $L^1$ for sets of finite perimeter $E_{0,j}\subset B_{g_{euc}}(z_j, 7\JO)$}
\end{equation}
and we define
$$\bigcup_{j=1}^{\JO}E_{0,j} =: E\Subset X.$$  
By \eqref{eqn: volumes converge},  $|E|_{g_{euc}} =1$, and  by \eqref{eqn: F conv} and the lower semi-continuity of $\F_{x_j}^*$ with respect to $L^1$ convergence, 
\begin{equation}
	\label{eqn: lsc}
{\F}^*_{x_j}(E_{0,j}) \leq  \liminf_{i \to \infty } {\F}^*_{x_j}(E_{i,j}) = \liminf_{i \to \infty } {\F}^*_{i,j}(E_{i,j})\,.
\end{equation} 
\smallskip

{\it Step 3: Hausdorff convergence of the boundaries.} For $i$ sufficiently large and thus $v_i$ sufficiently small, we apply Lemma~\ref{lem: volume constraint}; after rescaling the metric and using \eqref{eqn: rough metric close}, we see that $E_i$ is a local $(2\Lambda, \, \frac{\eo}{4})$-minimizer of the energy $\F^*_i$ on $X$. A standard adaptation of the classical argument  (see for instance \cite[Theorem 21.11]{Mag}) shows that the sets $E_i$ enjoy uniform volume density estimates: there exist constants $c_0$ and $r_0$ depending only on $g,\m,\M, n, \Lambda$ and $\e_0$ such that for any $r< r_0$ and  $x \in \partial E_i$, 
\begin{equation}
	\label{eqn: density ests}
c_0 \leq \frac{ |E_i \cap B(x,r)|_{g_{euc}}}{ \omega_n r^n} \leq 1- c_0\,.
\end{equation}
The density estimates \eqref{eqn: density ests} let us improve $L^1$ convergence to Hausdorff convergence of the boundaries:
\begin{equation}
	\label{eqn: hausdoff conv 1}
d_{H,g_{euc}}(\partial E_i , \partial E) \to 0.
\end{equation}
Indeed, if \eqref{eqn: hausdoff conv 1} does not hold, then for some $\mathsf{r}>0$ and along an unrelabeled subsequence we have either:\\
 (a) a sequence of points $x_i \in \partial E_i$ such that $B_{g_{euc}}(x_i, \mathsf{r})\cap \partial E = \emptyset$ for all $i$, or else \\
 (b) a sequence of points $x_i \in \partial E$ such that $B_{g_{euc}}(x_i, \mathsf{r}) \cap \partial E_i = \emptyset$.
 
  In case (a), first suppose $B_{g_{euc}}(x_i, \mathsf{r}) \subset E$ for all $i$. The lower density estimate in \eqref{eqn: density ests} implies that
\[
|E\Delta E_i |_{g_{euc}} \geq |E\setminus E_i|_{g_{euc}}\geq |B(x_i, \mathsf{r}) \setminus E_i |_{g_{euc}} \geq c_0\mathsf{r}^n,
\]
contradicting the $L^1$ convergence. If instead $B_{g_{euc}}(x_i, \mathsf{r}) \subset E^c$ for all $i$, the same argument using the upper density estimate in \eqref{eqn: density ests} we again reach a contradiction.

In case (b) we argue differently since we do not yet know that $E$ satisfies density estimates. First suppose $B_{g_{euc}}(x_i ,  \mathsf{r}) \subset E_i^c$ for all $i$. By compactness, up to  a further subsequence, $x_i \to x \in \partial E$, and thus $B_{g_{euc}}(x,  \mathsf{r}/2) \subset E_i^c$ for all $i$ sufficiently large. So, $1_{E_i}(x)= 0$ for all $y \in B_{g_{euc}}(x,  \mathsf{r}/2).$ Since $1_{E_i}\to 1_E$ in $L^1(\R^n)$ and thus pointwise a.e., we see that $|E\cap B_{g_{euc}}(x, \mathsf{r}/2)|_{g_{euc}}=0$, contradicting \eqref{eqn: cleaned up sets}. The analogous argument leads to the same contradiction when instead $B_{g_{euc}}(x_i ,  \mathsf{r}) \subset E_i$ for all $i$. This proves \eqref{eqn: hausdoff conv 1}.\\

{\it Step 4: $E$ is a local minimizer of ${\F}^*$.}  Next, we claim that $E$ is a volume-constrained $\frac{\eo}{4}$-local minimizer of the energy ${\F}^*$ in $\R^n$. To this end, take a set $G\subset \R^n$ with $|G|_{g_{euc}}=1$ and $E\Delta G \subset \Nb{g_{euc}}{\pa E}{\frac{\eo}{4}}$. Thanks to \eqref{eqn: hausdoff conv 1}, we also have  $E\Delta E_i \subset \Nb{g_{euc}}{\partial E_i}{\frac{\eo}{4}}$ for all $i$ sufficiently large. In turn, by the triangle inequality property of the symmetric difference \eqref{eqn: triangle}, we have 
\begin{align*}
	E_i\Delta G \subset (E_i\Delta E) \cup (E\Delta G) \subset \Nb{g_{euc}}{\partial E_i}{\frac{\eo}{4}}\cup \, \Nb{g_{euc}}{\partial E}{\frac{\eo}{4}} \subset \Nb{g_{euc}}{\partial E_i}{\frac{\eo}{2}}.
\end{align*}
Letting $G_j:=G\cap B(z_j, 8\JO)$, note
that $G_j\subset B(z_j, 7\JO)$ for every $j=1,\dots,\JO$. Hence, we can define $\hat{G}_{i,j} = \phi_{i,j}(G_j) \subset M$ and $\hat{G}_i:=\cup_{j=1}^\JO \hat{G}_{i,j}$. Letting $h_i=v_i^{\sfrac{-2}{n}}g$,  we see that $\W_i \Delta \hat{G}_{i} \subset \Nb{h_i}{\pa \W_i}{{\eo}}$ and thanks to \eqref{eqn: rough metric close}, up to passing to a subsequence,  $||\hat{G}_i|_{h_i}-1| < 1/i$. 
Thus, applying Lemma~\ref{lemma: dilation}, we obtain sets $\tilde{G}_i \subset M$ with $|\tilde{G}_i|_{h_i}=1$, i.e. $|\tilde{G}_i|_g = v_i$ and $\F(\tilde{G}_i) \leq (1 +1/i) \F(\hat{G}_{i})$. In particular, $\tilde{G}_i$ is an admissible competitor for the minimality of $\W_i$, i.e. $\F(\W_i) \leq \F(\tilde{G}_i) \leq (1+1/i)\F(\hat{G}_{i})$. 
Pulling the sets and energies back in charts and applying \eqref{eqn: F conv}, we find that 
\[
\F_i^*(E_i) \leq (1+1/i) \F_i^*(G) \leq  (1+2/i) \F^*(G).
\]
Taking the limit infimum and recalling \eqref{eqn: lsc}, we conclude that $\F^*(E) \leq \F^*(G)$, proving the claim.\\

{

{\it Step 5: $E$ is a Wulff shape for ${\F}^*_{x_1}$.}
Through the choice of basis for $T_{x_j}M$ via the normal coordinate map, we have identified $T_{x_j}M$ with $\R^n$ and therefore may identify the volume-$1$ tangent Wulff shape
$K_{x_j}\subset T_{x_j}M$ with a subset of $\R^n,$ which we again denote by $K_{x_j}\subset \R^n$, that is the (Euclidean) unit-volume Wulff shape for the translation invariant surface energy ${\F}^*_{x_j}$ on $\R^n$ defined above.

Since the balls $B_{g_{euc}}(z_j, 8\JO)$ are disjoint,  the set $E \cap B_{g_{euc}}(z_j, 8\JO)$ is a set of finite perimeter that is a local minimizer of the smooth, uniformly elliptic, translation invariant anisotropic surface energy ${\F}^*_{x_j}$.  According to the Alexandrov-type theorem of the first author, Kolasi\'{n}ski, and Santilli \cite[Corollary 6.8]{DRKS}, we deduce that $E \cap B_{g_{euc}}(z_j, 8\JO)$ is the union of finitely many Wulff shapes for ${\F}^*_{x_j}$ with equal volume, which are either disjoint or tangent to each other. We remark that in particular these Wulff shapes have boundaries at least of class $C^1$.

We now claim that $E$ has just one connected component. We will prove this by contradiction.
Assume without loss of generality that $E$ has two connected components $E_1$ and $E_2$, and that $1=|E|_{g_{euc}}=|E_1|_{g_{euc}}+|E_2|_{g_{euc}}$. 

First we observe that $E_1$ and $E_2$ cannot be tangent to each other, as otherwise there exists a tangent point $x\in \partial E_1\cap \partial E_2$ such that for every $\gamma>0$ there exists a radius $r>0$ for which 
$\Hi^{n-1}_{g_{euc}}( \partial B(x,r)\setminus (E_1\cup E_2))\leq \gamma r^{n-1}$ and $B(x,r)\subset  \Nb{g_{euc}}{\pa E}{\frac{\eo}{4}}$. Hence
$ E\cup B(x,r)$ decreases the energy $\F^*$ and increases the volume, so that a suitable rescaling is an admissible competitor and violates the local minimality of $E$.

Denote by $\alpha_i:=\F^*(E_i)$ and $v_i:=|E_i|$ for $i=1,2$. 
For $t$ small enough, we define
$$E^t:=(1+t)E_1\cup g(t)E_2 \subset X,$$
where $g(t)$ is defined by the constraint of volume $(1+t)^nv_1+g(t)^nv_2=v_1+v_2$. Simple algebraic manipulations of this volume constraint shows that
$$g(t)=\Big(1-\left((1+t)^n-1\right)\frac{v_1}{v_2}\Big)^{\sfrac 1n},$$
from which we compute
$$g'(0)=-\frac{v_1}{v_2}, \qquad g''(0)=(1-n)\frac{v_1}{v_2} \Big(1+\frac{v_1}{v_2}\Big).$$
We use these values to compute the derivatives of
$$
f(t):=\F^*(E^t)=(1+t)^{n-1}\alpha_1+g(t)^{n-1}\alpha_2.
$$
Via simple calculus and the local minimality of $E$, we compute
$$0=f'(0)=(n-1)\Big(\alpha_1-\frac{v_1}{v_2}\alpha_2\Big),$$
which is equivalent to $\alpha_1/\alpha_2=v_1/v_2$. Using this equality in computing $f''(0)$, and again by local minimality of $E$, we obtain the following contradiction
$$0\leq f''(0)=-\alpha_1(n-1)\Big(1+\frac{v_1}{v_2}\Big)<0.$$

We conclude that $E$ has only one connected component. In particular, $E$ must be contained in only one of the balls $B_{g_{euc}}(z_j, 8\JO)$.
We will assume, without loss of generality, that $E\subset B_{g_{euc}}(0, 8\JO)$.

In particular, we conclude that $E$ comprises exactly one Wulff shape, i.e. 
\begin{equation}
	\label{eqn: E is a wulff shape}
	E = K_{x_1} +y\qquad \mbox{ for some $y \in \R^n$. }
\end{equation}
 Moreover, keeping in mind that $0 \in K_{x_1}$  and $E\subset B_{g_{euc}}(0, 8 \JO )$, we see that $|y|_{g_{euc}} \leq 8\JO$. \\
 
}

{\it Step 6: Contradiction to the initial claim.}
Together,  \eqref{eqn: L1 conv}, \eqref{eqn: hausdoff conv 1} and \eqref{eqn: E is a wulff shape} show that for $i$ sufficiently large, 
\begin{equation}\label{convergences}
d_{H,{g_{euc}}} \big(\partial E_i , \partial K_{x_1} +y \big) <\eo/\beta \qquad \mbox{and} \qquad \big|E_i \Delta(  K_{x_1}+y) \big|_{g_{euc}} <\eo/\beta.
\end{equation}
Mapping these sets onto $M$ by $\phi_{i,1}$, \eqref{convergences} implies that 
\[
d_{H,g} \big(\partial \W_i , \exp_{x_1} \big(\partial (  v_i^{\sfrac{1}{n}}K_{x_1}) + y_i \big)\big)< \frac{\eo \, v_i^{\sfrac{1}{n}}}{\beta}, \qquad \mbox{and} \qquad \big|\W_i \Delta \exp_{x_1} (v_i^{\sfrac{1}{n}}K_{x_1}  + y_i ) \big|_g < \frac{\eo v_i}{\beta}.
\]
By Step 1, $y_i = v_i^{\sfrac{1}{ n}}y + \psi_1^{-1} (x_{i,1}) \in B_{g_{x_1}}(0, r_0)$.  This contradicts \eqref{eqn: contra} and completes the proof.
\end{proof}

\subsection{Recentering.}\label{sec:conc}
We now improve Theorem~\ref{thm: uniform convergence}, simply recentering our parametrization to correct the translation $y$, by means of Proposition \ref{prop: appendix tangent wulff}, to obtain the following:

\begin{theorem}\label{thm: uniform convergence3}
	Fix a closed Riemannian $n$-manifold $(M,g)$ and an anisotropic surface energy $\F$ with integrand $F$. For every $\kappa>0$, $\eo>0$, and $\beta>0$, there exists $v_0  = v_0(g, F, \kappa, \eo, \beta)>0$ such that the following holds. 
	Let $\W_v$ be a volume-constrained $\eo$-local minimizer  of volume $v<v_0$  with $\F(\W_v) \leq \kappa v^{\sfrac{(n-1)}{n}}$. There is a point $x \in M$ such that 
	\begin{equation}
		\label{eqn: qual hausdorff2}
	d_{H,g}\Big(\partial \W_v, \exp_{x}(\partial(v^{\sfrac{1}{n}}K_{x})\Big) <\frac{\eo v^{\sfrac{1}{n}}}{\beta} \qquad \mbox{and} \qquad \Big|\W_v \Delta \exp_{x}(v^{\sfrac{1}{n}}K_{x})\Big|_g <\frac{\eo v}{\beta} \,. 
	\end{equation}
\end{theorem}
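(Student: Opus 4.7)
The strategy is to bootstrap Theorem~\ref{thm: uniform convergence} with the stronger closeness parameter $2\beta$ in place of $\beta$ and a suitably small translation radius $\rho$, and then to absorb the residual translation $y \in T_x M$ by re-centering the tangent Wulff shape at the new point $x_1 := \exp_x(y)$. The cost of this re-centering is precisely what Proposition~\ref{prop: appendix tangent wulff} controls: it estimates both $d_{H,g}(\exp_{x_1}(\partial v^{\sfrac{1}{n}}K_{x_1}), \exp_x(\partial v^{\sfrac{1}{n}}K_x + y))$ and its set-level analogue by $C(g,F)\,\rho\,v^{\sfrac{1}{n}}$ whenever $d_g(x, x_1)<\rho$ and $v^{\sfrac{1}{n}}<\rho_0$.

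Concretely, letting $C_0, \rho_0$ denote the constants from Proposition~\ref{prop: appendix tangent wulff}, $\mathsf{C}$ the constant in \eqref{eqn:massimo}, and $C_1=C_1(g,F)$ a Minkowski-tube constant identified in the last paragraph, I would fix $\rho := \min\{\rho_0,\, \eo/(4C_0\beta),\, \eo/(4C_1\mathsf{C}\beta)\}$, invoke Theorem~\ref{thm: uniform convergence} with parameters $(\kappa, 2\beta, \eo, \rho)$ to obtain a threshold $v_0'$, and set $v_0 := \min\{v_0', \rho_0^n\}$; note that $v_0$ depends only on $g, F, \kappa, \eo, \beta$ as required. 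For $v<v_0$ this produces $x\in M$ and $y\in B_{g_x}(0,\rho)$ for which both estimates in \eqref{eqn: qual hausdorff1} hold with $2\beta$ in place of $\beta$. Setting $x_1 := \exp_x(y)$, one has $z_1 = y$ and $d_g(x, x_1)\leq |y|_{g_x} < \rho$, so Proposition~\ref{prop: appendix tangent wulff} at scale $r = v^{\sfrac{1}{n}}$ yields
\begin{align*}
d_{H,g}\bigl(\exp_{x_1}(\partial v^{\sfrac{1}{n}}K_{x_1}),\ \exp_x(\partial v^{\sfrac{1}{n}}K_x + y)\bigr) &< C_0\rho\, v^{\sfrac{1}{n}},\\
d_{H,g}\bigl(\exp_{x_1}(v^{\sfrac{1}{n}}K_{x_1}),\ \exp_x(v^{\sfrac{1}{n}}K_x + y)\bigr) &< C_0\rho\, v^{\sfrac{1}{n}}.
\end{align*}
The first bound, together with the triangle inequality for Hausdorff distance and our choice of $\rho$, immediately yields the boundary estimate in \eqref{eqn: qual hausdorff2}.

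The only remaining step is to upgrade the set-level Hausdorff bound into an $L^1$ bound on the symmetric difference. I would use the elementary inclusion $A\Delta B \subset \Nb{g}{\partial A}{\delta}\cup \Nb{g}{\partial B}{\delta}$ whenever $d_{H,g}(A,B)\leq \delta$, reducing the problem to a Minkowski-tube estimate on $\partial\exp_x(v^{\sfrac{1}{n}}K_x + y)$ at width $\delta = C_0\rho v^{\sfrac{1}{n}}$. Since $\exp_x$ is bi-Lipschitz at the relevant scale and $\partial K_x$ is a $C^{2,\alpha}$ uniformly convex hypersurface of perimeter at most $\mathsf{C}$ with reach bounded below by a constant depending only on $g, F$ (by the uniform convexity of $F^2_x$ and the $C^{2,\alpha}$ hypotheses on $F$), the standard tube-volume formula gives $|\Nb{g}{\partial\exp_x(v^{\sfrac{1}{n}}K_x + y)}{C_0\rho v^{\sfrac{1}{n}}}|_g \leq C_1\mathsf{C}\rho\,v$. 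Our choice of $\rho$ makes this at most $\eo v/(2\beta)$, and the triangle inequality for symmetric differences combined with the $L^1$ bound from Theorem~\ref{thm: uniform convergence} yields the second estimate in \eqref{eqn: qual hausdorff2}. I expect the Minkowski tube bound to require the most care, though it is routine once the uniform reach of $\partial K_x$ and the bi-Lipschitz constants of $\exp_x$ at scale $v^{\sfrac{1}{n}}$ are established.
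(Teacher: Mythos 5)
Your proposal follows the paper's own route: apply Theorem~\ref{thm: uniform convergence} with $\beta$ replaced by $2\beta$ and a suitably small translation radius $\rho=\rho(\e_0,\beta,F,g)$, then invoke Proposition~\ref{prop: appendix tangent wulff} with $x_1=\exp_x(y)$, $z_1=y$, $r=v^{\sfrac{1}{n}}$ to absorb the residual translation. The only point where you add detail is the conversion of the set-level Hausdorff bound from Proposition~\ref{prop: appendix tangent wulff} into an $L^1$ bound via the Minkowski-tube estimate on $\partial\exp_x(v^{\sfrac{1}{n}}K_x+y)$; the paper states this step tersely, and your argument (inclusion $A\Delta B\subset \Nb{g}{\partial A}{\delta}\cup\Nb{g}{\partial B}{\delta}$ plus tube-volume for the uniformly convex Wulff shape) is a correct way to fill it in.
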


\begin{proof}[Proof of Theorem~\ref{thm: uniform convergence3}] 
We apply Theorem~\ref{thm: uniform convergence} with $\kappa=\kappa$, $\eo=\eo$, $\beta=2\beta$ and $\rho= \frac{\eo}{2C\beta}$. Notice that $v_0$ will now depend only on $g, F, \kappa, \eo, \beta$, as $\rho$ depends just on $\eo$ and $\beta$.
We deduce the validity of \eqref{eqn: qual hausdorff1}. We now apply Proposition \ref{prop: appendix tangent wulff} choosing $x_0=x$, $x_1=\exp_x(y)$, $z_1=y$, $\rho=\rho$, $r=v^{\sfrac{1}{n}}$, to deduce \eqref{eqn: qual hausdorff2}.
\end{proof}

\subsection{Conclusion of the proof.}\label{sec:conc}
Finally, we can conclude the proof of Theorem~\ref{thm: uniform convergence2}.  We need only to show that a volume-constrained $\eo$-local minimizer is connected. 

\begin{proof}[Proof of Theorem~\ref{thm: uniform convergence2}] 
Choose $\beta=\beta_0$ and let $v_0$ be as in Theorem~\ref{thm: uniform convergence3}.
Since $\beta_0$ depends just on $n,\kappa, F,\eo$, we deduce that $v_0$ depends only on $n,g,\kappa, F,\eo$. Applying Theorem~\ref{thm: uniform convergence3}, we deduce the validity of \eqref{eqn: qual hausdorff}.
 We are now left to prove that $\W_v$ has one connected component.
To prove this, we first claim that the volume of every connected component $U$ of $\W_v$ satisfies the following lower bound: 
\begin{equation}\label{boundd}
	|U|_g\geq \min\left\{ \Big(\frac{\m }{2\Lambda} n \omega_n^{\sfrac{1}{n}}\Big)^n, \frac{\e_0^n\, \omega_n }{2^{n+1}}\right\}  v.
\end{equation}
	Otherwise, denote by $U$ a connected component such that
	\begin{equation}\label{contrboundd}
	|U|_g<\min\left\{ \Big(\frac{\m }{2\Lambda} n \omega_n^{\sfrac{1}{n}}\Big)^n, \frac{\e_0^n\, \omega_n }{2^{n+1}}\right\} v.
	\end{equation}
	We wish to apply the local quasi-minimality (without volume constraint) from Lemma \ref{lem: volume constraint} with the competitor $\W_v\setminus U$. To this end, we  argue along the lines of Lemma~\ref{lem: containment} to see that $U \subset  \Nb{g}{\partial U}{\frac{\eo}{2}v^{\sfrac{1}{n}}}$: if $x \in U \setminus \Nb{g}{\partial U}{\frac{\eo}{2}v^{\sfrac{1}{n}}}$, then $B_g(x , \frac{\eo}{2}v^{\sfrac{1}{n}} )\subset U$ and $|U|_g \geq \eo^n\omega_nv/2^{n+1}$ (provided $v_0$ is sufficiently small in terms of $g$ and $n$), contradicting \eqref{contrboundd}.
So, by Lemma \ref{lem: volume constraint} we have that
$$ \F(\W_v\setminus U) + \Lambda\,v^{-\sfrac{1}{n}} |\W_v\Delta (\W_v\setminus U)|_g\geq\F(\W_v)= \F(U)+\F(\W_v\setminus U)\overset{\eqref{eqn: Wulff small volume}}{\geq} \Big(\frac{\m }{2} n \omega_n^{\sfrac{1}{n}}\Big) |U|_g^{\sfrac{(n-1)}n}+\F(\W_v\setminus U),$$
from which we deduce the following contradiction:
$$\frac{\m }{2} n \omega_n^{1/n}\overset{\eqref{contrboundd}}{>} \Lambda\,v^{-\sfrac{1}{n}}|U|_g^{\sfrac 1n}\geq \frac{\m }{2} n \omega_n^{\sfrac{1}{n}}.$$
Since \eqref{contrboundd} leads to contradiction, we deduce the validity of \eqref{boundd}. 

We also observe that, by \eqref{eqn: qual hausdorff}, for every connected component $U$ of $\W_v$:
$$\partial U\subset \partial \W_v \subset  \Nb{g}{\exp_x(\partial(v^{\sfrac{1}{n}}K_x) + y)}{{\eo v^{\sfrac{1}{n}}}/{\beta_0}}.
 $$ 
We deduce that either 
\begin{itemize}
\item[(Case 1)]  
$ \exp_x(v^{\sfrac{1}{n}}K_x + y)\setminus \Nb{g}{\exp_x(\partial(v^{\sfrac{1}{n}}K_x) + y}{\frac{\eo v^{1/n}}{\beta_0}}\subset U$, or 
\item[(Case 2)] $U\subset 
\Nb{g}{\exp_x(\partial(v^{\sfrac{1}{n}}K_x)+ y)}{\frac{\eo v^{1/n}}{\beta_0}}$.
\end{itemize}
Given the $L^1$ estimate in \eqref{eqn: qual hausdorff1}, there can be only one connected component satisfying (Case 1). Moreover, \eqref{boundd} implies that no connected component can satisfy (Case 2), because (Case 2) provides the following volume upper bound for $U$:
$$
\min\left\{ \Big(\frac{\m }{2\Lambda} n \omega_n^{\sfrac{1}{n}}\Big)^n, \frac{\e_0^n\, \omega_n }{2^{n+1}}\right\} v\leq |U|_g\leq 4 {\frac{\eo v^{\sfrac{1}{n}}}{\beta_0}} P(v^{\sfrac{1}{n}}K_x + y)\overset{\eqref{eqn:massimo}}{\leq}
	4 {\frac{\eo v^{\sfrac{1}{n}}}{\beta_0}} \C v^{\sfrac{(n-1)}{n}}={\frac{4 \C \eo}{\beta_0}}  v
$$
which contradicts the definition of 
$$\beta_0(n,\kappa, F,\eo):= \frac{8 \C \eo}{\min\left\{ \Big(\frac{\m }{2\Lambda} n \omega_n^{\sfrac{1}{n}}\Big)^n, \frac{\e_0^n\, \omega_n }{2^{n+1}}\right\}}.
$$
 We conclude that $\W_v$ has just one connected component.
\end{proof}

\section{Quantitative closeness to a Wulff shape}\label{sec: quantitative}
In this section we complete the proof of Theorem~\ref{thm: local min}. To begin, in Corollary~\ref{cor: improved estimate}, we prove a quantitative version of Theorem~\ref{thm: uniform convergence2} through an application of Figalli-Maggi-Pratelli's quantitative Wulff inequality \cite{FiMaPr}.  This application originates with \cite{Figalli-Maggi-drops} in the context of {\it global} minimizers of a related anisotropic variational problem on Euclidean space.  A fundamental difference in the present setting of local minimizers is that it was essential to first prove the qualitative Theorem~\ref{thm: uniform convergence2} in order to use a (projected) tangent  Wulff shape as a competitor for local minimality.
\begin{corollary}\label{cor: improved estimate}
	Fix a Riemannian $n$-manifold $(M,g)$ and an anisotropic surface energy $\F$. 
	For every $\kappa>0$ and $\eo>0$, there exists $v_0  = v_0(g, F, \kappa, \eo)$ and $C(g, F, \kappa, \eo)$ such that the following holds. Let $\W_v$ be a volume-constrained $\eo$-local minimizer $\W_v$ of volume $v<v_0$ with $\F(\W_v) \leq \kappa v^{\sfrac{(n-1)}{n}}$. Then $\W_v$ has one connected component and there is a point $x_0 \in M$ such that 
	\begin{equation}
		\label{eqn: quant hausdorff}
	\frac{d_{H,g}\Big(\partial \W_v, \exp_{x_0}(\partial v^{\sfrac{1}{n}}K_{x_0})\Big)}{v^{\sfrac{1}{n}}} < C v^{\sfrac{1}{2n^2}} \qquad \mbox{and} \qquad \frac{\Big|\W_v \Delta \exp_{x_0}(v^{\sfrac{1}{n}}K_{x_0})\Big|_g}{v} < Cv^{\sfrac{1}{2n}} \,. 
	\end{equation}
\end{corollary}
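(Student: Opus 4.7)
The strategy is to promote Theorem~\ref{thm: uniform convergence2} to a quantitative estimate by testing volume-constrained local minimality against a tangent Wulff competitor, applying the quantitative Wulff inequality of Figalli--Maggi--Pratelli \cite{FiMaPr}, and finally upgrading the resulting $L^1$ bound to a Hausdorff bound through the uniform quasi-minimality of Lemma~\ref{lem: volume constraint}. This mirrors the approach of \cite{Figalli-Maggi-drops} for global minimizers, with the key difference that the qualitative Theorem~\ref{thm: uniform convergence2} is needed to guarantee that the Wulff competitor lies in the admissible $\eo$-tube around $\partial\W_v$.

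First, let $x_0\in M$ be the center given by Theorem~\ref{thm: uniform convergence2} and set $\psi=\exp_{x_0}$. The $C^1$ regularity \eqref{eqn: F C1 reg} of $F$ in $x$, combined with the standard Taylor expansion of $\sqrt{\det g_{ij}}$ around $x_0$ in normal coordinates, yields
\[
\F\big(\psi(v^{\sfrac{1}{n}}K_{x_0})\big) \leq \bar{\F}_{x_0}(K_{x_0})\,v^{\sfrac{(n-1)}{n}}\big(1+Cv^{\sfrac{1}{n}}\big),\qquad
\big||\psi(v^{\sfrac{1}{n}}K_{x_0})|_g - v\big| \leq C v^{1+\sfrac{1}{n}}.
\]
Applying Lemma~\ref{lemma: dilation} in the rescaled metric $h=v^{-\sfrac{2}{n}}g$ corrects the volume with a proportional error in the energy; by \eqref{eqn: qual hausdorff} and Remark~\ref{rmk: true diameter}, the resulting competitor lies in $\Nb{g}{\partial\W_v}{\eo v^{\sfrac{1}{n}}}$, so volume-constrained $\eo$-local minimality of $\W_v$ gives
\[
\F(\W_v) \leq \bar{\F}_{x_0}(K_{x_0})\,v^{\sfrac{(n-1)}{n}}\big(1+C v^{\sfrac{1}{n}}\big).
\]

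Next I would transfer this bound to a fixed blow-up chart and apply the quantitative Wulff inequality. Using the diameter bound of Remark~\ref{rmk: true diameter}, I pull $\W_v$ back via $\psi$ and rescale by $v^{-\sfrac{1}{n}}$ to obtain a set $E_v\Subset T_{x_0}M\cong\R^n$ with $||E_v|_{g_{euc}}-1|\leq C v^{\sfrac{1}{n}}$. The same Taylor expansion converts the energy bound above into $\F^*_{x_0}(E_v) \leq \bar{\F}_{x_0}(K_{x_0})(1+C v^{\sfrac{1}{n}})$, where $\F^*_{x_0}$ denotes the translation-invariant energy on $\R^n$ induced by $F_{x_0}$ via normal coordinates. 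After renormalizing $E_v$ to unit Euclidean volume, \cite{FiMaPr} furnishes a translation $z\in\R^n$ (with $|z|$ a priori bounded via Theorem~\ref{thm: uniform convergence2}) such that $|E_v\,\Delta\,(K_{x_0}+z)|_{g_{euc}}\leq C v^{\sfrac{1}{2n}}$. Pushing this forward to $M$ and applying Proposition~\ref{prop: appendix tangent wulff} at $x_0':=\exp_{x_0}(v^{\sfrac{1}{n}}z)$ to replace the translated Wulff shape by $\exp_{x_0'}(v^{\sfrac{1}{n}}K_{x_0'})$ delivers the $L^1$ bound in \eqref{eqn: quant hausdorff}.

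Finally I would upgrade the $L^1$ closeness to the Hausdorff closeness using uniform density estimates. By Lemma~\ref{lem: volume constraint}, in the metric $h=v^{-\sfrac{2}{n}}g$ the set $\W_v$ is a uniform local $(\Lambda,\sfrac{\eo}{2})$-quasi-minimizer of $\F$, hence enjoys scale-invariant density estimates; the projected Wulff shape $\exp_{x_0'}(v^{\sfrac{1}{n}}K_{x_0'})$ inherits analogous density bounds by smoothness. A standard argument then shows that if $p$ lies on the boundary of one set at $h$-distance $r$ from the boundary of the other, a ball $B_h(p,\sfrac{r}{2})$ contributes $\geq c r^n$ to the $h$-symmetric difference, forcing $r^n\lesssim v^{\sfrac{1}{2n}}$; rescaling back to $g$ gives the Hausdorff exponent $v^{\sfrac{1}{2n^2}}$. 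The main obstacle is maintaining uniformity of constants as $v\to 0$: the Taylor error constants, the constant in Lemma~\ref{lemma: dilation}, the Figalli--Maggi--Pratelli constant, the quasi-minimality constant in Lemma~\ref{lem: volume constraint}, and the density-based upgrade must all combine into bounds independent of $v$, which is exactly what the diameter bound of Theorem~\ref{thm: diameter bound} and the uniform quasi-minimality of Lemma~\ref{lem: volume constraint} make possible.
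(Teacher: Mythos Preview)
Your proposal is correct and follows essentially the same route as the paper: use Theorem~\ref{thm: uniform convergence2} to center at $x_0$, test local minimality against a volume-corrected tangent Wulff competitor to bound the Wulff deficit by $Cv^{\sfrac{1}{n}}$, apply \cite{FiMaPr} for the $L^1$ estimate, recenter via Proposition~\ref{prop: appendix tangent wulff}, and upgrade to Hausdorff via the uniform density estimates coming from Lemma~\ref{lem: volume constraint}. The only cosmetic differences are that the paper dilates the Wulff shape directly rather than invoking Lemma~\ref{lemma: dilation}, and it upgrades to Hausdorff before recentering rather than after.
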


\begin{proof}
Let $v_0, \beta_0$ and $x_0\in M$ be as in Theorem~\ref{thm: uniform convergence2}, and let  $\psi = \exp_{x_0}$. 
To lighten notation, we let $K= K_{x_0}$ and $\bar{\F} = \bar{\F}_{x_0}$.
  By Remark~\ref{rmk: true diameter} we may define $G_v = \psi^{-1}(\W_v) \subset B_{g_{x_0}}(0, Rv^{\sfrac{1}{n}})$ where $R = R(n,g,F)>0$. On $B_{g_{x_0}}(0, 10 R)$ we have
\begin{equation}\label{eqn: error in the metrics}
 (1- cv^{\sfrac{2}{n}})g_{x_0} \leq \psi^* g \leq (1+ cv^{\sfrac{2}{n}})g_{x_0}
\end{equation}
 for a constant $c=c(g)$, and so \eqref{eqn: error in the metrics} 
\begin{equation}\label{eqn: error in volumes}
\big| |G_v|_{g_{x_0}} -v \big| \leq cv^{1 + \sfrac{2}{n}}\qquad \mbox{and} \qquad\big| |\psi(v^{\sfrac{1}{n}}K)|_g - v\big| \leq cv^{1+ \sfrac{2}{n}}\,.
\end{equation} 
So, just as in the proof of Lemma~\ref{lemma: dilation} we can choose a dilation factor $\lambda>0$ with $|\lambda -1|\leq c v^{\sfrac{2}{n}}$ such that the set $E := \psi (\lambda v^{\sfrac{1}{n}} K)$ has $|E|_g=|\W_v|_g$. Moreover, up to further decreasing $v_0$ depending on $g, F, n$ and applying \eqref{eqn: triangle} and Theorem~\ref{thm: uniform convergence2}, we have 
$\W_v \Delta E    \subset \Nb{g}{\partial \W_v}{{\eo v^{\sfrac{1}{n}}}}.$
So, $E$ is an admissible competitor for the local minimality of  $\W_v$ and thus $\F(\W_v)\leq \F(E)$.  On the other hand, thanks to \eqref{eqn: error in the metrics} and \eqref{eqn: F C1 reg}, 
\begin{align*}
	\F(E) & \leq \big(1+c v^{\sfrac{1}{n}}\big)\bar{\F}(\lambda v^{\sfrac{1}{n} }K) \\&=\big(1+c v^{\sfrac{1}{n}}\big) \lambda^{n-1}v^{\sfrac{(n-1)}{n}} \bar{\F}(K)
 \leq \big(1+c v^{\sfrac{1}{n}}\big) v^{\sfrac{(n-1)}{n}}\bar{\F}(K),
	\end{align*}
	and $\F(\W_v) \geq 	(1-c v^{\sfrac{1}{n}})\bar{\F}(G_v)$
	for $c = c(F, g)$. 
Together these yield
$
\bar{\F}(G_v) \leq (1 + c v^{\sfrac{1}{n}})v^{\sfrac{(n-1)}{n}} \bar{\F}(K) .$
In particular, additionally using \eqref{eqn: error in volumes}, we estimates the scale-invariant deficit in the Wulff inequality \eqref{Wulffineq}:
$$\delta(G_v)
=\frac{\bar{\F}(G_v) }{|G_v|_{g_{x_0}}^{\frac{n-1}{n}} \bar{\F}(K)} -1\leq Cv^{\sfrac{1}{n}}$$
for $C= C(n,g, F)$.
By the quantitative  Wulff inequality \cite{FiMaPr}, there  exists $y\in \R^n$ such that 
$$
\frac{|G_v \Delta (y+v^{\sfrac{1}{n}}K)|_{g_{x_0}}^2}{v^2}\leq  C \delta(G_v) \leq Cv^{\sfrac{1}{n}}.
$$
Moreover,  $|y|\leq Cv^{\sfrac{1}{n}}$ \eqref{eqn: qual hausdorff}. Using the scale-invariantly uniform density estimates for $\W_v$ as  in Step 3 of Theorem~\ref{thm: uniform convergence}, we obtain the estimate 
$${d_{H,g}(\partial \W_v,\psi(\partial(y+ v^{\sfrac{1}{n}}K))}^n \leq C  |\W_v \Delta\psi(\partial(y+ v^{\sfrac{1}{n}}K))|_g < C v^{1 + \sfrac{1}{2n}}.$$

It remains to eliminate that translation $y$. To this end we apply Proposition \ref{prop: appendix tangent wulff} choosing $x_0={x_0}$, $x_1=\exp_{x_0}(y)$, $z_1=y$, $\rho=Cv^{\sfrac{1}{n}}$, $r=v^{\sfrac{1}{n}}$. Since $\rho r\leq C v^{\sfrac 1n} v^{\sfrac 1n}\leq C v^{\sfrac 1n} v^{\sfrac{1}{2n^2}}$, we obtain:
$$
\frac{|\W_v\Delta \exp_{x_1}(v^{\sfrac{1}{n}}K_{x_1})|_g^2}{v^2}\leq Cv^{\sfrac{1}{n}}, \qquad \mbox{and} \qquad 
\frac{d_{H,g}(\partial \W_v, \exp_{x_1}(\partial(v^{\sfrac{1}{n}}K_{x_1}))}{v^{\sfrac{1}{n}}} < C v^{\sfrac{1}{2n^2}}.$$
Up to relabelling the point $x_1$, this is exactly the desired claim \eqref{eqn: quant hausdorff}.
\end{proof}

Finally, we conclude the proof of Theorem~\ref{thm: local min}.
\begin{proof}[Proof of Theorem \ref{thm: local min}]
Let $v_0$  and $x_0$ be as in Corollary~\ref{cor: improved estimate}.
The quantitative estimate \eqref{eqn: quant hausdorff} and the assumed regularity of $F$ can be used as in the proof of \cite[Theorem 2]{Figalli-Maggi-drops} to show that $\W_v$ is of class $C^{2,\alpha}$ and to obtain the following almost anisotropic umbilicality estimate on the anisotropic second fundamental form of $\partial \exp_{x_0}^{-1}(\W_v)$:
\begin{equation}\label{umbilical}
\|D^2F_{x_0}(\nu_{\exp_{x_0}^{-1}(\W_v)})D\nu_{\exp_{x_0}^{-1}(\W_v)} - \mbox{Id}\|_{C^0(\partial \exp_{x_0}^{-1}(\W_v))}\leq C(g, F, \kappa, \eo,\alpha)v^{\frac{2\alpha}{n+2\alpha}}.
\end{equation}
Anisotropic almost umbilical surfaces enjoy higher order quantitative closeness to the Wulff shapes, see for instance \cite{DG,DG2}.

Since the proof of \eqref{umbilical} is obtained repeating the arguments that are laid out in detail in \cite{Figalli-Maggi-drops}, we simply outline the steps and highlight the differences: 

\begin{itemize}
\item The first step---analogous to \cite[Theorem 8]{Figalli-Maggi-drops}---is to show that $\exp_{x_0}^{-1}(\partial \W_v)\subset T_{x_0}M$ is locally the graph of a $C^{1,\alpha}$ function over an affine hyperplane in $T_{x_0}M$. The idea is to  show the hypotheses of the $\e$-regularity theorem (\cite{Almgren1, Bomb, SSA, SchoenSimon, DS}) are satisfied at each point on $\exp_{x_0}^{-1}(\partial \W_v)$  and at a small enough scale by constructing a competitor from the projection of $\exp_{x_0}^{-1}(\partial \W_v)$ locally onto a disk.
\item The proof of \cite[Theorem 8]{Figalli-Maggi-drops} goes through in a nearly identical fashion in the present context with the following two substituted ingredients: use Corollary~\ref{cor: improved estimate} in place of \cite[Corollary 1]{Figalli-Maggi-drops} to show that $\W_v$ is uniformly close to a Wulff shape, and use Lemma~\ref{lem: volume constraint} and \eqref{eqn: Wulff small volume} in place of \cite[Lemma 9]{Figalli-Maggi-drops} to obtain the non-volume-constrained quasi-minimality property akin to \cite[Eqn. (C.162)]{Figalli-Maggi-drops} (which is applied only to local competitors as in Lemma~\ref{lem: volume constraint}). 
\item The next step is to used Schauder estimates to improve the local estimates to $C^{2,\alpha}$ estimates for the functions locally parametrizing $\exp_{x_0}^{-1}(\partial \W_v)$, as in \cite[Theorem 12]{Figalli-Maggi-drops}. Here we use the assumption that $F$ is $C^{2,\alpha}$ in $x$ and $\nu$. 
\item Finally \eqref{umbilical} follows from interpolating between the Hausdorff estimates of Corollary~\ref{cor: improved estimate} and the $C^{2,\alpha}$ estimates arguing just as in \cite[Theorem 13]{Figalli-Maggi-drops}.
\end{itemize}
We are left to prove that $\W_v$ is geodesically convex.
Let $\psi$ be the normal coordinate map at $x_0$ and observe that by \eqref{eqn: quant hausdorff} there exists $\eta(g, F, \kappa, \eo)>0$ such that
\begin{equation}\label{palladentro}
B_{g_{euc}}(0,\eta v^{\frac 1n}) \Subset E_v:=\psi^{-1}(\W_v)\subset \R^n
\end{equation}  
and that by \eqref{umbilical} $E_v$ is uniformly convex. In particular, we have the following lower bound on the smallest eigenvalue $\lambda_1$ of the second fundamental form of $E_v$:
\begin{equation}\label{eqn: lower bound ppl curvature}
	\lambda_1\geq C(F)v^{-\frac 1n}.
\end{equation}
This implies that, given two points $a,b\in \partial E_v$, any curve connecting $a,b$ that  is contained in $\R^n\setminus E_v$ has length at least $|b-a|+C(F)v^{\frac 1n}\frac{|b-a|^2}{2}$. 

Assume by way of contradiction that $\W_v$ is not geodesically convex and so there exists a minimizing geodesic $\tilde{\gamma}:[0,\ell]\to M$ parametrized by arclength with end points $\tilde a,\tilde b \in \partial \W_v$ and such that $\tilde{\gamma}(0,\ell)\subset M\setminus \overline{\W}_v$. Let  $a:=\psi^{-1}(\tilde a)$, $b:=\psi^{-1}(\tilde b)$, and $\gamma = \psi^{-1} \circ \tilde{\gamma }:[0,\ell] \to  \R^n$. 

If $|a-b|\geq \eta v^{\sfrac{1}{n}}/4$, then the observation above, together with \eqref{palladentro}, shows that the image through $\psi$ of the segment $[a,b]$ has smaller length than any curve contained in $M\setminus \W_v$ joining $\tilde{a}$ with $\tilde{b}$ provided  $v_0$ (and hence $v$) is chosen sufficiently small depending only on $g, F, \eo$, and  $\kappa$. 

Thus we must have  $|a-b|<\eta v^{\sfrac 1n}/4$. Let  $z= (a+b)/2$ be  the midpoint and for $t \in [0,1]$ set  $a_t:= a-tz$, $b_t := b-tz$ and $\Gamma_t = Im({\gamma}) - tz$. We have $\Gamma_0 \subset B_{g_{euc}}(z,\eta v^{\sfrac 1n})$ provided  again $v_0$ is chosen sufficiently small depending only on $g, F, \eo$, and  $\kappa$, otherwise the straight-line competitor violates the minimality of $\gamma$. So,  $\Gamma_1 \subset B_{g_{euc}}(z,\eta v^{\sfrac 1n})\Subset  E_v$. Furthermore, by strict convexity, $a_t, b_t\in E_v$ for all $t \in (0,1]$.   So, $s_0 := \inf\{ s \in [0,1] : \Gamma_t \subset E_v\}$ lies in $(0,1)$ and there exists $t_0\in (0,\ell)$ such that $\gamma(t_0) - s_0 z \in \Gamma_{s_0}\cap \partial E_v$. This implies that at  corresponding point $\gamma(t_0)$, the curvature of $\gamma$ with respect to ${g_{euc}}$ is at  least $C(F) v^{-\sfrac{1}{n}}$ by \eqref{eqn: lower bound ppl curvature}. On the other hand, estimating the metric coefficients $g_{ij}$ and Christoffel symbols $\Gamma_{ij}^k$, we see that for $v_0$ sufficiently small this contradicts the  fact that ${\gamma}$ satisfies the geodesic equation $\frac{d^2}{dt^2} {\gamma}^k +\Gamma_{ij}^k \frac{d}{dt}{\gamma}^i \frac{d}{dt}{\gamma}^j  = 0$.
Hence  $\W_v$ is geodesically convex.
 \end{proof}
%
%
%
%
%
%
\bibliography{RefsLocalMin.bib}
\bibliographystyle{alpha}
\end{document}